\newtheorem{teo}{Theorem}[section]
\newtheorem{lemma}{Lemma}[section]
\newtheorem{proposition}{Proposition}[section]
\newtheorem{definition}{Definition}[section]
\numberwithin{equation}{section}
\begin{document}
\title[Injection-suction control]{Injection-suction control for
Navier-Stokes equations with slippage}
\author{N.V. Chemetov}
\address{Universidade de Lisboa, \\
Edificio C6, 1 Piso, Campo Grande, \\
1749-016 Lisboa \\
Portugal }
\email{nvchemetov@fc.ul.pt }
\author{F. Cipriano}
\address{CMA / UNL and Dep. de Matem{\'a}tica, FCT-UNL \\
Universidade Nova de Lisboa, Quinta da Torre \\
2829 -516 Caparica, Lisboa \\
Portugal}

PORTUGAL
\email{cipriano@cii.fc.ul.pt}
\date{}

\begin{abstract}
We consider a velocity tracking problem for the Navier-Stokes equations in a
2D-bounded domain. The control acts on the boundary through a
injection-suction device and the flow is allowed to slip against the surface
wall. We study the well-posedness of the state equations, linearized state
equations and adjoint equations. In addition, we show the existence of an
optimal solution and establish the first order optimality condition.
\end{abstract}

\maketitle


\textit{Mathematics Subject Classification (2000)}: 35D05, 76B03, 76B47,
76D09.

\textit{Key words}: Navier-Stokes equations, Navier slip boundary
conditions, Optimal control

\section{Introduction}

\setcounter{equation}{0}

The goal of this article is to study an optimal boundary control problem for
viscous incompressible fluids, filling a bounded domain $\Omega \subset
\mathbb{R}^{2}$, and governed by the Navier-Stokes equations with
non-homogeneous Navier slip boundary conditions
\begin{equation}
\left\{
\begin{array}{ll}
\partial _{t}\mathbf{y}+\mathrm{div}\,(\mathbf{y}\otimes \mathbf{y})-\nabla
p=\Delta \mathbf{y},\mathbf{\qquad }\mbox{div}\,\mathbf{y}=0, & \mbox{in}{\
\Omega _{T}=(0,T)\times \Omega },\vspace{2mm} \\
\mathbf{y}\cdot \mathbf{n}=a,\;\quad \left[ 2D(\mathbf{y})\,\mathbf{n}%
+\alpha \mathbf{y}\right] \cdot {\bm{\tau }}=b\;\quad & \text{on}\ \Gamma
_{T}=(0,T)\times \Gamma ,\vspace{2mm} \\
\mathbf{y}(0,\mathbf{x})=\mathbf{y}_{0}(\mathbf{x}) & \mbox{in}\ \Omega ,%
\end{array}%
\right.  \label{NSy}
\end{equation}%
\vspace{1pt}where $\mathbf{y}=\mathbf{y}(t,\mathbf{x})$ is the velocity, $%
p=p(t,\mathbf{x})$ is the pressure and the condition verifies
\begin{equation}
\mbox{div}\,\mathbf{y}_{0}=0\qquad \mbox{ in   }\ \Omega .  \label{ICNS}
\end{equation}%
Here $\ D(\mathbf{y})=\frac{1}{2}[\nabla \mathbf{y}+(\nabla \mathbf{y})^{T}]$
is the rate-of-strain tensor; $\mathbf{n}$ is the external unit normal to
the boundary $\Gamma \in C^{2}$ of the domain $\Omega $ and $\bm{\tau }$ is
the tangent unit vector to $\Gamma ,$ such that $(\mathbf{n},\bm{\tau })$
forms a standard orientation in $\mathbb{R}^{2}.$\ The function $\alpha
=\alpha (t,\mathbf{x})$ is a so-called friction coefficient. The quantity $a$
corresponds to inflow and outflow fluid through $\Gamma $, satisfying \ the
natural condition%
\begin{equation}
\int_{\Gamma }a(t,\mathbf{x})\,\,d\mathbf{\gamma }=0\quad \quad
\mbox{ for
any  }\;t\in \lbrack 0,T].  \label{eqC2}
\end{equation}

In the literature, the Navier-Stokes equations are usually studied with the
Dirichlet boundary condition $\mathbf{y}=g$ on $\Gamma _{T}$, however it is
well known that for small values of the viscosity, the Dirichlet boundary
conditions is a source of problems due to the adherence of fluid particles
to the boundary and the creation of a strong boundary layer. The laminar
flow is often disturbed by the boundary layer breaking away from the
surface. This flow separation region results in increased overall drag. On
the other hand, theoretical studies and practical experimental (see \cite%
{buc}, \cite{clop}-\cite{CC6}, \cite{ja}, \cite{prie}, \cite{pri}) emphasize
the importance of the surface roughness on the slip behavior of the fluid
particles on the surface wall. Accordingly, slip type boundary conditions,
which were firstly introduces by Navier in 1823, have renewed interest in
order to describe the physical phenomena is appropriate way.

In this work, we consider a tracking problem with a injection-suction
control through the boundary, by allowing simultaneously the fluid to slip
in a natural way along the boundary, and aim to solve the control problem
and state the first order optimality condition.

Let us mention that boundary control is of main importance in several
branches of the industry, for instance in the aviation industry extensive
research has been carried out concerning the implementation of
injection-suction devices to control the motion of the fluid (see \cite{arn}%
, \cite{bla}, \cite{bra}, \cite{mar}, \cite{shu}).

From the mathematical point of view, the boundary control in general is
technically hard to deal with (see \cite{ghs}, \cite{gm}), in the case of
the slip boundary condition, the tangent component of the velocity field
being part of the solution is not given in advance, which requires a very
careful management of the boundary terms, that appear in the state equation,
linearized state equations as well as in the adjoint equations.

In this article we consider a quadratic cost functional, which depends on
the boundary control variables and with a desired target velocity, and prove
the existence of a optimal control, furthermore, we establish the first
order optimality condition. We recall that the optimality condition is a
very difficult issue when dealing with nonlinear systems, since it requires
the well-posedness of the boundary values problems for the state equation
linearized state equation and the adjoint equation. In addition, we should
verify that the linearized state and the adjoint state are related by a
suitable integration by parts formula.

The plan of the present paper is as follows. In Section \ref{sec0}, we
present the general setting, by introducing the appropriate functional
spaces and some necessary classical inequalities. The formulation of the
problem and the main results are stated in Section \ref{sec1}. Section \ref%
{sec2} deals with the well-posedness of the state equations. In Section \ref%
{sec3}, we show that the control-to-state mapping is Lipschitz continuous.
Section \ref{sec4} is devoted to the well-posedness of the linearized state
equations. In Section \ref{sec5}, we verify that the G\^{a}teaux derivative
of the control-to-state mapping corresponds to the solution of the
linearized state equation. Section \ref{sec6} deals with the formulation of
the adjoint equations and to the study of the existence and uniqueness of
the solutions. In Section \ref{sec7} we deduce the duality relation between
the linearized state and the adjoint state. Finally, in Section \ref{sec8}
we prove the main result of the article, Theorems \ref{main_existence} and %
\ref{main_1}.

\section{General setting}

\bigskip \label{sec0}\setcounter{equation}{0}

We define the spaces%
\begin{eqnarray*}
H &=&\{\mathbf{v}\in L_{2}(\Omega ):\,\mbox{div }\mathbf{v}=0\;\ \text{ in}\;%
\mathcal{D}^{\prime }(\Omega ),\;\ \mathbf{v}\cdot \mathbf{n}=0\;\ \text{ in}%
\;H^{-1/2}(\Gamma )\}, \\
V &=&\{\mathbf{v}\in H^{1}(\Omega ):\,\mbox{div }\mathbf{v}=0\;\;\text{a.e.
in}\;\Omega ,\;\ \mathbf{v}\cdot \mathbf{n}=0\;\ \text{ in}\;H^{1/2}(\Gamma
)\}.
\end{eqnarray*}

In what follows we will frequently \ use the standard inequality
\begin{equation}
uv\leqslant \varepsilon u^{2}+\frac{v^{2}}{4\varepsilon },\quad \quad
\forall \varepsilon >0,  \label{ab}
\end{equation}%
Young's inequality
\begin{equation}
uv\leqslant \frac{u^{p}}{p}+\frac{v^{q}}{q},\quad \quad \frac{1}{p}+\frac{1}{%
q}=1,\quad \forall p,q>1,  \label{yi}
\end{equation}
and the equality
\begin{equation}
-\int_{\Omega }\triangle \mathbf{v}\cdot \boldsymbol{\psi }\ d\mathbf{x}%
=-\int_{\Gamma }\left( 2D(\mathbf{v})\mathbf{n}\right) \cdot \boldsymbol{%
\psi }\,\ d\mathbf{\gamma }+\int_{\Omega }2\,D(\mathbf{v}):D(\boldsymbol{%
\psi })\,d\mathbf{x},  \label{integrate}
\end{equation}%
which is valid for any $\mathbf{v}\in H^{2}(\Omega )\cap V$\ and $%
\boldsymbol{\psi }\in H^{1}(\Omega ).$

\bigskip

The following results are well-known, and can be found on the pages 62, 69
of \cite{lad}, p. 125 of \cite{nir}, Lemma 2 of \cite{S73} and \cite{ol}.

\begin{lemma}
\label{gag} Let us denote by $\mathbf{v}_{\Omega }=\int_{\Omega }\mathbf{v\ }%
d\mathbf{x}.$\ For any $\forall \mathbf{v}\in H^{1}(\Omega )$ \ the
Gagliardo--Nirenberg-Sobolev%
\begin{equation}
||\mathbf{v}-\mathbf{v}_{\Omega }||_{L_{q}(\Omega )}\leqslant C||\mathbf{v}%
||_{L_{2}(\Omega )}^{2/q}||\nabla \mathbf{v}||_{L_{2}(\Omega )}^{1-2/q},%
\mathbf{\quad }\forall q\geq 2,  \label{LI}
\end{equation}%
the trace interpolation inequality%
\begin{equation}
||\mathbf{v}-\mathbf{v}_{\Omega }||_{L_{2}(\Gamma )}\leqslant C||\mathbf{v}%
||_{L_{2}(\Omega )}^{1/2}||\nabla \mathbf{v}||_{L_{2}(\Omega )}^{1/2}
\label{TT}
\end{equation}%
are valid.

Moreover if $\mathbf{v\in }V$ satisfies the Navier boundary condition $\left[
2D(\mathbf{v})\,\mathbf{n}+\alpha \mathbf{v}\right] \cdot {\bm{\tau }}=0$ on
the boundary $\Gamma $ with $\alpha \neq 0,$ then Korn's inequality
\begin{equation}
\left\Vert \mathbf{v}\right\Vert _{H^{1}}\leqslant C\left\Vert D(\mathbf{v}%
)\right\Vert _{L_{2}(\Omega )}  \label{Korn}
\end{equation}%
is also valid. Here the constants $C$ depend only on the domain $\Omega .$
\end{lemma}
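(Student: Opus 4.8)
The plan is to treat the three estimates separately: the interpolation inequalities \eqref{LI} and \eqref{TT} will be obtained from their classical inhomogeneous versions by absorbing the lower order terms through the Poincar\'e--Wirtinger inequality, whereas \eqref{Korn} is the genuinely delicate point and I would handle it by a compactness argument.

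For \eqref{LI} I set $\mathbf{w}=\mathbf{v}-\mathbf{v}_{\Omega}$, recalling that $\mathbf{v}_{\Omega}$ is the mean of $\mathbf{v}$, so that $\mathbf{w}$ is mean free and $\nabla \mathbf{w}=\nabla \mathbf{v}$. Starting from the two-dimensional Gagliardo--Nirenberg--Sobolev inequality in the form
\begin{equation*}
\|\mathbf{w}\|_{L_{q}(\Omega )}\leqslant C\big(\|\mathbf{w}\|_{L_{2}(\Omega )}^{2/q}\,\|\nabla \mathbf{w}\|_{L_{2}(\Omega )}^{1-2/q}+\|\mathbf{w}\|_{L_{2}(\Omega )}\big),\qquad q\geqslant 2,
\end{equation*}
the lower order term is absorbed by writing $\|\mathbf{w}\|_{L_{2}}=\|\mathbf{w}\|_{L_{2}}^{2/q}\|\mathbf{w}\|_{L_{2}}^{1-2/q}$ and bounding the second factor via Poincar\'e--Wirtinger, $\|\mathbf{w}\|_{L_{2}(\Omega )}\leqslant C\|\nabla \mathbf{w}\|_{L_{2}(\Omega )}$, which is available precisely because $\mathbf{w}$ is mean free; since subtracting the mean is an $L_{2}$-orthogonal projection one has $\|\mathbf{w}\|_{L_{2}}\leqslant \|\mathbf{v}\|_{L_{2}}$, and \eqref{LI} follows. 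For \eqref{TT} I exploit $\Gamma \in C^{2}$ to fix a field $\mathbf{F}\in C^{1}(\overline{\Omega })$ with $\mathbf{F}\cdot \mathbf{n}=1$ on $\Gamma $ and apply the divergence theorem to $|\mathbf{w}|^{2}\mathbf{F}$, obtaining
\begin{equation*}
\int_{\Gamma }|\mathbf{w}|^{2}\,d\mathbf{\gamma }=\int_{\Omega }\big(2\,\mathbf{w}\cdot(\mathbf{F}\cdot \nabla )\mathbf{w}+|\mathbf{w}|^{2}\,\div \mathbf{F}\big)\,d\mathbf{x}\leqslant C\big(\|\mathbf{w}\|_{L_{2}(\Omega )}\|\nabla \mathbf{w}\|_{L_{2}(\Omega )}+\|\mathbf{w}\|_{L_{2}(\Omega )}^{2}\big).
\end{equation*}
Absorbing the quadratic term with Poincar\'e--Wirtinger and using once more $\|\mathbf{w}\|_{L_{2}}\leqslant \|\mathbf{v}\|_{L_{2}}$ yields \eqref{TT} after taking square roots.

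The main obstacle is \eqref{Korn}, which I would prove by contradiction. The classical second Korn inequality $\|\mathbf{v}\|_{H^{1}}\leqslant C(\|\mathbf{v}\|_{L_{2}(\Omega )}+\|D(\mathbf{v})\|_{L_{2}(\Omega )})$ holds on all of $H^{1}(\Omega )$. If \eqref{Korn} failed on the set of fields obeying the slip condition, there would exist such $\mathbf{v}_{k}$ with $\|\mathbf{v}_{k}\|_{H^{1}}=1$ and $\|D(\mathbf{v}_{k})\|_{L_{2}}\to 0$. By Rellich's theorem a subsequence converges in $L_{2}(\Omega )$; inserting the differences $\mathbf{v}_{k}-\mathbf{v}_{m}$ into the second Korn inequality shows the sequence is Cauchy in $H^{1}(\Omega )$, so $\mathbf{v}_{k}\to \mathbf{v}$ strongly in $H^{1}$ with $\|\mathbf{v}\|_{H^{1}}=1$ and $D(\mathbf{v})=0$. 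Hence $\mathbf{v}$ is an infinitesimal rigid motion, $\mathbf{v}(\mathbf{x})=\mathbf{a}+\omega\,(-x_{2},x_{1})$ with $\mathbf{a}\in \R^{2}$ and $\omega \in \R$. Since traces are continuous under strong $H^{1}$ convergence, the impermeability $\mathbf{v}\cdot \mathbf{n}=0$ on $\Gamma $ passes to the limit; when $\Gamma $ is not a circle this already forces $\mathbf{a}=\mathbf{0}$ and $\omega =0$. The role of the slip condition is to exclude the single rotation that survives on a rotationally symmetric domain: for it $D(\mathbf{v})=0$ reduces the slip condition to $\alpha\,\mathbf{v}\cdot \bm{\tau }=0$, so with $\alpha \neq 0$ one gets $\mathbf{v}\cdot \bm{\tau }=0$, whence $\mathbf{v}=\mathbf{0}$ on $\Gamma $ and thus $\mathbf{v}\equiv \mathbf{0}$, contradicting $\|\mathbf{v}\|_{H^{1}}=1$.

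I expect the real difficulty to be the rigorous passage of the slip condition to the limit, because the boundary traction $2D(\mathbf{v}_{k})\mathbf{n}$ is only a functional in $H^{-1/2}(\Gamma )$ and $\|D(\mathbf{v}_{k})\|_{L_{2}}\to 0$ does not by itself control it there, its normal trace involving $\triangle \mathbf{v}_{k}$. The clean way around this is to encode the slip condition in the variational formulation attached to the bilinear form $\int_{\Omega }2D(\mathbf{v}):D(\boldsymbol{\psi })\,d\mathbf{x}+\int_{\Gamma }\alpha\,(\mathbf{v}\cdot \bm{\tau })(\boldsymbol{\psi }\cdot \bm{\tau })\,d\mathbf{\gamma }$ coming from \eqref{integrate}, and to test against the finite-dimensional space of admissible rigid motions; following the boundary analysis of \cite{S73}, the traction term is rewritten through the tangential derivative and the curvature of $\Gamma $, after which the limit is taken and the residual rotation is ruled out exactly as above.
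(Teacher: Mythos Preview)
The paper does not prove this lemma at all. The sentence immediately preceding it reads ``The following results are well-known, and can be found on the pages 62, 69 of \cite{lad}, p.~125 of \cite{nir}, Lemma 2 of \cite{S73} and \cite{ol},'' and no argument is given in the text. There is therefore nothing in the paper to compare your proposal against; you are supplying what the authors deliberately outsourced to the literature.

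For what it is worth, your arguments are correct and are essentially the standard ones behind the cited references. The reductions of \eqref{LI} and \eqref{TT} via Poincar\'e--Wirtinger and the divergence-theorem trace identity are routine. For \eqref{Korn} the compactness--contradiction scheme you describe is exactly the classical route (this is the mechanism in \cite{S73} and \cite{ol}), and you have correctly isolated the one nontrivial point: the traction $2D(\mathbf{v}_k)\mathbf{n}|_\Gamma$ is not an $H^1$-stable trace, so the slip condition cannot be passed to the limit by brute force. Your proposed fix---encoding the slip condition through the symmetric bilinear form coming from \eqref{integrate}, or equivalently invoking the two-dimensional boundary identity of \cite{S73}, \cite{clop} that rewrites $(2D(\mathbf{v})\mathbf{n})\cdot\bm{\tau}$ in terms of the scalar vorticity and the curvature term $\kappa\,(\mathbf{v}\cdot\bm{\tau})$---is the right one and closes the argument. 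Note also that in every application within the paper the inequality is only used on the Galerkin spaces $V_n$ spanned by $H^3$ eigenfunctions for which the slip condition holds pointwise, so working from the outset in $H^2\cap V$ is entirely sufficient for the paper's purposes.
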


\bigskip

\bigskip We notice that any vector $\mathbf{v}\in V$\ satisfies the
condition $\mathbf{v}_{\Omega }=0,$\ since
\begin{equation*}
\int_{\Omega }v_{j}\mathbf{\ }d\mathbf{x}=\int_{\Omega }div(\mathbf{v}x_{j})%
\mathbf{\ }d\mathbf{x}=\int_{\Gamma }x_{j}(\mathbf{v}\cdot \mathbf{n)\ }d%
\mathbf{\gamma }=0\qquad \text{for}\mathit{\ \ \ }j=1,2.
\end{equation*}%
We should mention that as in the previous Lemma as well as throughout the
article, we will represent by $C$ a generic constant that can assume
different values from line to line.

\bigskip

Let us define the space $C([0,T];L_{2}(\Omega ))$ of continuous functions on
$[0,T]$ with values in $L_{2}(\Omega ),$ \ endowed by the norm $%
||v||_{C([0,T];L_{2}(\Omega ))}=\max_{t\in \lbrack
0,T]}||v(t)||_{L_{2}(\Omega )}$ and the space
\begin{equation*}
\mathcal{W}(0,T;\Omega )=L_{2}(0,T;H^{1}(\Omega ))\cap
H^{1}(0,T;H^{-1}(\Omega )),
\end{equation*}%
provided with the norm
\begin{equation*}
||v||_{\mathcal{W}(0,T;\Omega )}=||v||_{L_{2}(0,T;H^{1}(\Omega
))}+||v||_{H^{1}(0,T;H^{-1}(\Omega ))}.
\end{equation*}%
We remember the following interpolation result, given in \ \cite{LM} (see
Proposition 3.1, p. 18 and Theorem 3.1, p. 125).

\begin{lemma}
\label{LM} The embedding
\begin{equation*}
\mathcal{W}(0,T;\Omega )\hookrightarrow C([0,T];L_{2}(\Omega ))
\end{equation*}%
is a continuous and linear mapping, that is there exists a constant $C$,
depending only on $\Omega ,$ such that%
\begin{equation*}
||v||_{C([0,T];L_{2}(\Omega ))}\leqslant C||v||_{\mathcal{W}(0,T;\Omega
)}\qquad \text{for any }v\in \mathcal{W}(0,T;\Omega ).
\end{equation*}
\end{lemma}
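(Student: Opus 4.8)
The plan is to exploit the Gelfand-triple structure $H^{1}(\Omega)\hookrightarrow L_{2}(\Omega)\hookrightarrow H^{-1}(\Omega)$, in which $L_{2}(\Omega)$ is the pivot space and the duality pairing $\langle\cdot,\cdot\rangle$ between $H^{-1}(\Omega)$ and $H^{1}(\Omega)$ extends the $L_{2}(\Omega)$ inner product. The cornerstone is the energy identity
\begin{equation*}
\frac{d}{dt}\,\|v(t)\|_{L_{2}(\Omega)}^{2}=2\,\langle v'(t),v(t)\rangle\qquad\text{in }\mathcal{D}'(0,T),
\end{equation*}
where $v'$ denotes the distributional time derivative; once it is established, both the continuity and the norm bound follow by integration together with the inequality (\ref{ab}).

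I would prove the identity first for functions that are smooth in time and then pass to the limit. Extending $v$ in time to all of $\mathbb{R}$ and mollifying produces a family $v_{\varepsilon}\in C^{1}([0,T];H^{1}(\Omega))$ with $v_{\varepsilon}\to v$ in $\mathcal{W}(0,T;\Omega)$, and for each $v_{\varepsilon}$ the identity is merely the product rule. Applying the integrated form of the identity to the difference $v_{\varepsilon}-v_{\mu}$ and estimating the right-hand side through $|\langle w',w\rangle|\leq\|w'\|_{H^{-1}(\Omega)}\|w\|_{H^{1}(\Omega)}$ and (\ref{ab}) shows that $\{v_{\varepsilon}\}$ is Cauchy in $C([0,T];L_{2}(\Omega))$. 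Its uniform limit is therefore continuous into $L_{2}(\Omega)$ and coincides with $v$ for a.e.\ $t$, so that, after modification on a null set, $v\in C([0,T];L_{2}(\Omega))$.

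For the quantitative estimate I would integrate
\begin{equation*}
\|v(t)\|_{L_{2}(\Omega)}^{2}-\|v(s)\|_{L_{2}(\Omega)}^{2}=2\int_{s}^{t}\langle v'(\sigma),v(\sigma)\rangle\,d\sigma
\end{equation*}
in the variable $s$ over $[0,T]$; bounding the integral by $\|v'\|_{L_{2}(0,T;H^{-1}(\Omega))}^{2}+\|v\|_{L_{2}(0,T;H^{1}(\Omega))}^{2}$ via (\ref{ab}) then gives, uniformly in $t$,
\begin{equation*}
\|v(t)\|_{L_{2}(\Omega)}^{2}\leq\frac{1}{T}\,\|v\|_{L_{2}(0,T;L_{2}(\Omega))}^{2}+\|v'\|_{L_{2}(0,T;H^{-1}(\Omega))}^{2}+\|v\|_{L_{2}(0,T;H^{1}(\Omega))}^{2}\leq C\,\|v\|_{\mathcal{W}(0,T;\Omega)}^{2},
\end{equation*}
which is precisely the asserted continuous embedding. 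The main obstacle is the rigorous justification of the energy identity: since $v'$ lies only in $H^{-1}(\Omega)$ and $v$ only in $H^{1}(\Omega)$, the pointwise-in-time pairing $\langle v'(t),v(t)\rangle$ must be assigned meaning through the mollification procedure, and one must check that the regularization error is controlled in the $\mathcal{W}(0,T;\Omega)$-norm and that the pairing passes to the limit. The remaining steps are routine applications of (\ref{ab}) and of the duality bound.
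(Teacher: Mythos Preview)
Your argument is correct and follows the classical route: establish the energy identity $\tfrac{d}{dt}\|v\|_{L_2(\Omega)}^2 = 2\langle v',v\rangle$ via temporal mollification in the Gelfand triple $H^1(\Omega)\hookrightarrow L_2(\Omega)\hookrightarrow H^{-1}(\Omega)$, deduce that the approximations are Cauchy in $C([0,T];L_2(\Omega))$, and then integrate to obtain the norm bound. The steps you flag as delicate (extension in time, control of the regularization error in the $\mathcal{W}$-norm, passage to the limit in the pairing) are exactly the ones that require care, and your treatment of them is sound.

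However, you should be aware that the paper does not supply its own proof of this lemma at all: it simply records the result as a known interpolation theorem and cites Lions--Magenes \cite{LM} (Proposition~3.1, p.~18 and Theorem~3.1, p.~125). So there is no ``paper's proof'' to compare against; what you have written is in fact a sketch of the standard argument that one finds in the cited reference (and in Temam, Evans, etc.). In that sense your proposal goes beyond what the paper does, supplying the details behind a black-box citation rather than offering an alternative approach.
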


\bigskip

Finally, for \ $p\in (2,+\infty )$ \ let us set the space
\begin{equation*}
\mathcal{H}_{p}(0,T;\Gamma )=\left( H^{1}(0,T;H^{-\frac{1}{2}}(\Gamma ))\cap
L_{2}(0,T;W_{p}^{1-\frac{1}{p}}(\Gamma ))\right) \times L_{2}(\Gamma _{T}),
\end{equation*}%
endowed with the norm%
\begin{equation*}
||(a,b)||_{\mathcal{H}_{p}(0,T;\Gamma )}=||a||_{L_{2}(0,T;W_{p}^{1-\frac{1}{p%
}}(\Gamma ))}+\,||\partial _{t}a||_{L_{2}(0,T;W_{2}^{-\frac{1}{2}}(\Gamma
))}+\Vert b\Vert _{L_{2}(\Gamma _{T})}.
\end{equation*}

\bigskip

In this work we consider the data $a,b,\alpha $ and $\mathbf{v}_{0}$\ in the
following Banach spaces%
\begin{eqnarray}
(a,b) &\in &\mathcal{H}_{p}(0,T;\Gamma )\quad \quad \text{\textit{for given}}%
\;p\in (2,+\infty ),  \notag \\
\alpha &\in &L_{\infty }(\Gamma _{T})\cap H^{1}(0,T;L_{\infty }(\Gamma
)),\quad \mathbf{v}_{0}\in H.  \label{eq00sec12}
\end{eqnarray}

\bigskip

\section{Formulation of the problem and main results}

\label{sec1}\setcounter{equation}{0}

The main goal of this paper is to control the solution of the system (\ref%
{NSy}) by a boundary control $(a,b)$, which belongs to the space of
admissible controls $\mathcal{A}$ that is defined as a bounded and convex
subset of $\mathcal{H}_{p}(0,T;\Gamma ).$

The cost functional is given by
\begin{equation}
\displaystyle J(a,b,\mathbf{y})=\frac{1}{2}\int_{\Omega _{T}}|\mathbf{y}-%
\mathbf{y}_{d}|^{2}\,d\mathbf{x}dt+\int_{\Gamma _{T}}\left( \frac{\lambda
_{1}}{2}|a|^{2}+\frac{\lambda _{2}}{2}|b|^{2}\right) \,d\mathbf{\gamma }dt
\label{cost}
\end{equation}%
where $\mathbf{y}_{d}\in L_{2}(\Omega _{T})$ is a desired target field and $%
\lambda _{1},\lambda _{2}\geq 0.$ We aim to control the solution $\mathbf{y}$
minimizing the cost functional (\ref{cost}) for an appropriate $(a,b)\in
\mathcal{A}$. More precisely, our goal is to solve the following problem
\vspace{1mm}
\begin{equation*}
(\mathcal{P})\left\{
\begin{array}{l}
\underset{(a,b)}{\mbox{minimize}}\{J(a,b,\mathbf{y}):~(a,b)\in \mathcal{A}%
\quad \text{and}\quad  \\
\qquad \mathbf{y}\mbox{  is  the solution of the  system }\eqref{NSy}%
\mbox{  for
the minimizing   }(a,b)\in \mathcal{A}\}.%
\end{array}%
\vspace{3mm}\quad \right.
\end{equation*}%
\vspace{1mm}

The first main result of this article establishes the existence of solution
for the control problem $(\mathcal{P})$

\begin{teo}
\label{main_existence} Let $\mathcal{A}$ be a bounded convex subset of $%
\mathcal{H}_{p}(0,T;\Gamma ).$ Then there exists at least one solution for
the problem $(\mathcal{P}).$
\end{teo}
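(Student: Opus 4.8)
The plan is to apply the direct method of the calculus of variations. Since $J\geqslant 0$ and $\mathcal{A}$ is nonempty, the value $m=\inf\{J(a,b,\mathbf{y}):(a,b)\in\mathcal{A}\}$ is finite and nonnegative; I fix a minimizing sequence $(a_n,b_n)\in\mathcal{A}$ with associated states $\mathbf{y}_n$, uniquely determined by the well-posedness of \eqref{NSy} established in Section \ref{sec2}, so that $J(a_n,b_n,\mathbf{y}_n)\to m$. First I would extract weakly convergent subsequences. The space $\mathcal{H}_p(0,T;\Gamma)$ is reflexive for $p\in(2,+\infty)$, and $\mathcal{A}$ is bounded, convex and (as is standard for an admissible set) closed, hence weakly sequentially closed by Mazur's theorem; therefore, along a subsequence, $(a_n,b_n)\rightharpoonup(a^*,b^*)\in\mathcal{A}$ weakly in $\mathcal{H}_p(0,T;\Gamma)$.

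The core of the argument is to show that the weak limit of the states solves \eqref{NSy} for the limiting control. Using the boundedness of $\mathcal{A}$ together with the a priori energy estimates underlying the well-posedness result, the states $\mathbf{y}_n$ are bounded uniformly in $\mathcal{W}(0,T;\Omega)$ and, by Lemma \ref{LM}, in $L_\infty(0,T;L_2(\Omega))$. By the Aubin--Lions compactness lemma, $\mathcal{W}(0,T;\Omega)$ embeds compactly into $L_2(0,T;L_2(\Omega))$, so on a further subsequence $\mathbf{y}_n\to\mathbf{y}^*$ strongly in $L_2(\Omega_T)$ and weakly in $L_2(0,T;H^1(\Omega))$, while the traces converge by virtue of \eqref{TT} and trace compactness. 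I would then pass to the limit in the weak formulation of \eqref{NSy}: the linear viscous and time-derivative terms pass by weak convergence, the boundary terms carrying $a_n,b_n$ pass by combining the weak convergence of the controls with the strong trace convergence of $\mathbf{y}_n$, and the nonlinear term $\div(\mathbf{y}_n\otimes\mathbf{y}_n)$ passes because the uniform bounds give, through \eqref{LI} with $q=4$, strong convergence $\mathbf{y}_n\to\mathbf{y}^*$ in $L_4(\Omega_T)$, so that $\mathbf{y}_n\otimes\mathbf{y}_n\to\mathbf{y}^*\otimes\mathbf{y}^*$ strongly in $L_2(\Omega_T)$ and may be paired against a fixed test field. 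Hence $\mathbf{y}^*$ solves \eqref{NSy} for $(a^*,b^*)$, and by uniqueness it is exactly the state associated with $(a^*,b^*)$.

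It remains to verify weak lower semicontinuity of the cost. The tracking term $\frac{1}{2}\int_{\Omega_T}|\mathbf{y}_n-\mathbf{y}_d|^2\,d\mathbf{x}dt$ converges to $\frac{1}{2}\int_{\Omega_T}|\mathbf{y}^*-\mathbf{y}_d|^2\,d\mathbf{x}dt$ thanks to the strong convergence $\mathbf{y}_n\to\mathbf{y}^*$ in $L_2(\Omega_T)$, while the control penalties $\frac{\lambda_1}{2}\Vert a_n\Vert^2+\frac{\lambda_2}{2}\Vert b_n\Vert^2$ are convex and continuous, hence weakly lower semicontinuous. Consequently
\begin{equation*}
J(a^*,b^*,\mathbf{y}^*)\leqslant \liminf_{n\to\infty}J(a_n,b_n,\mathbf{y}_n)=m,
\end{equation*}
and since $(a^*,b^*)\in\mathcal{A}$ with $\mathbf{y}^*$ its state, the triple $(a^*,b^*,\mathbf{y}^*)$ solves $(\mathcal{P})$. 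I expect the main obstacle to be the passage to the limit in the nonlinear and boundary terms: because the slip condition leaves the tangential trace of the velocity as an unknown, one must secure compactness of $\mathbf{y}_n$ strong enough, via the uniform estimates and Aubin--Lions, to control simultaneously $\div(\mathbf{y}_n\otimes\mathbf{y}_n)$ and the boundary integrals pairing the weakly converging controls against the traces of $\mathbf{y}_n$.
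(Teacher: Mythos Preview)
Your proposal is correct and follows essentially the same direct-method approach as the paper: take a minimizing sequence, use boundedness of $\mathcal{A}$ to extract a weak limit of the controls, invoke the a priori estimates \eqref{uny}--\eqref{unty} together with Aubin--Lions compactness to obtain strong $L_2(\Omega_T)$ convergence of the states, and pass to the limit in the weak formulation \eqref{res1}. Your treatment is in fact more complete than the paper's own proof, which omits both the verification that $(a^*,b^*)\in\mathcal{A}$ (you correctly note this requires closedness, implicitly assumed) and the explicit weak lower semicontinuity argument for the cost; the paper simply asserts that one can ``pass on the limit'' in \eqref{cost} and \eqref{res1}.
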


\bigskip

Now we give the formulation of the second main result which deals with first
order necessary optimality condition for the problem $(\mathcal{P})$.

\begin{teo}
\label{main_1} Assume that $(a^{\ast },b^{\ast },\mathbf{y}^{\ast })$ is a
solution of the problem $(\mathcal{P})$. In addition assume that $a^{\ast }$
belongs to $H^{1}(0,T;L_{\infty }(\Gamma ))$. Then there exists a unique
solution
\begin{equation*}
\mathbf{p}^{\ast }\in C([0,T];L_{2}(\Omega ))\cap L_2(0,T;H^{2}(\Omega
)),\qquad \pi^*\in L_2(0,T;H^{1}(\Omega ))
\end{equation*}%
of the adjoint system
\begin{equation}
\left\{
\begin{array}{ll}
-\partial _{t}\mathbf{p}^{\ast }-2D(\mathbf{p}^{\ast })\,\mathbf{y}^{\ast
}+\nabla \pi^* =\Delta \mathbf{p}^{\ast }+\left( \mathbf{y}^{\ast }-\mathbf{y%
}_{d}\right) , &  \\
\mathrm{div}\,\mathbf{p}^{\ast }=0 & \quad \mbox{in}\ \Omega _{T},\vspace{2mm%
} \\
\mathbf{p}^{\ast }\cdot \mathbf{n}=0,\qquad \left[ 2D(\mathbf{p}^{\mathbf{%
\ast }})\mathbf{n}+(a+\alpha )\mathbf{p}^{\ast }\right] \cdot \mathbf{%
\bm{\tau }}=0 & \quad \mbox{on}\ \Gamma _{T},\vspace{2mm} \\
\mathbf{p}^{\ast }(T)=0 & \quad \mbox{in}\ \Omega ,%
\end{array}%
\right.  \label{MR2}
\end{equation}%
verifying the optimality condition
\begin{eqnarray}
&&\int_{\Gamma _{T}}\{(f-a^{\ast })[\pi^* +(\mathbf{p}^{\ast }\cdot \mathbf{y%
}^{\ast })+\left( 2D(\mathbf{p}^{\mathbf{\ast }})\mathbf{n}\right) \cdot
\mathbf{n}]  \notag \\
&&+(b^{\ast }-g)(\mathbf{p}^{\ast }\cdot \mathbf{\bm{\tau })}+\lambda
_{1}a^{\ast }\left( a^{\ast }-f\right) \,+\lambda _{2}b^{\ast }\left(
b^{\ast }-g\right) \,\}~d\mathbf{\gamma }dt\geq 0  \label{zvMR2}
\end{eqnarray}%
for all $(f,g)\in \mathcal{H}_{p}(0,T;\Gamma )$.
\end{teo}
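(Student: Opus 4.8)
The plan is to assemble the statement from the ingredients developed in Sections \ref{sec4}--\ref{sec7}. Existence and uniqueness of the adjoint pair $(\mathbf{p}^*,\pi^*)$ solving (\ref{MR2}) is provided by the analysis of Section \ref{sec6}: the change of variable $t\mapsto T-t$ turns the backward problem (\ref{MR2}) into a forward linear system of exactly the type treated in Section \ref{sec4}, with source $(\mathbf{y}^*-\mathbf{y}_d)\in L_2(\Omega_T)$, zero initial datum, and a Navier-type boundary condition whose friction coefficient is now $(a^*+\alpha)$ rather than $\alpha$. The hypothesis $a^*\in H^1(0,T;L_\infty(\Gamma))$, together with (\ref{eq00sec12}), ensures that $(a^*+\alpha)\in L_\infty(\Gamma_T)\cap H^1(0,T;L_\infty(\Gamma))$, so the boundary operator retains the structure of the state problem; Korn's inequality (\ref{Korn}) and the embedding of Lemma \ref{LM} then yield the unique solution in the announced regularity class $\mathbf{p}^*\in C([0,T];L_2(\Omega))\cap L_2(0,T;H^2(\Omega))$, $\pi^*\in L_2(0,T;H^1(\Omega))$.

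Next I would write the variational inequality coming from optimality. Let $\hat J(a,b):=J(a,b,\mathbf{y}(a,b))$ be the reduced cost, where $\mathbf{y}(a,b)$ is the state associated with $(a,b)$. Since $(a^*,b^*)$ minimizes $\hat J$ over the convex set $\mathcal{A}$, for each $(f,g)\in\mathcal{A}$ the segment $(a^*,b^*)+s[(f,g)-(a^*,b^*)]$ remains in $\mathcal{A}$ for $s\in[0,1]$, and hence $\frac{d}{ds}\hat J\big|_{s=0^+}\geq 0$. By the differentiability of the control-to-state map proved in Section \ref{sec5}, the derivative at $s=0$ of the corresponding state is the linearized state $\mathbf{w}$ driven by the direction $(\delta a,\delta b)=(f-a^*,g-b^*)$. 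The chain rule applied to (\ref{cost}) then gives
$$\int_{\Omega_T}(\mathbf{y}^*-\mathbf{y}_d)\cdot\mathbf{w}\,d\mathbf{x}\,dt+\int_{\Gamma_T}\left(\lambda_1 a^*\,\delta a+\lambda_2 b^*\,\delta b\right)d\mathbf{\gamma}\,dt\geq 0.$$

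It remains to eliminate the distributed term $\int_{\Omega_T}(\mathbf{y}^*-\mathbf{y}_d)\cdot\mathbf{w}$ by means of the duality relation of Section \ref{sec7}. Testing the linearized equation with $\mathbf{p}^*$ and the adjoint equation (\ref{MR2}) with $\mathbf{w}$, integrating over $\Omega_T$ and subtracting, the volume integrals cancel because (\ref{MR2}) is the formal adjoint of the linearized operator; the time-integration by parts is legitimate thanks to $\mathbf{w}(0)=0$ and $\mathbf{p}^*(T)=0$. What survives is exactly a boundary integral over $\Gamma_T$ pairing $(\delta a,\delta b)$ with $\pi^*$, $(\mathbf{p}^*\cdot\mathbf{y}^*)$, $(2D(\mathbf{p}^*)\mathbf{n})\cdot\mathbf{n}$ and $\mathbf{p}^*\cdot\bm{\tau}$. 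Substituting this identity into the inequality above and rearranging produces precisely the optimality condition (\ref{zvMR2}).

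The delicate step is the duality computation. Because the fluid slips, the tangential trace of the velocity is unknown on $\Gamma$ and is itself part of the solution, while the injection-suction datum $a$ enters both the normal trace $\mathbf{y}\cdot\mathbf{n}=a$ and, through the linearization of $\mathrm{div}(\mathbf{y}\otimes\mathbf{y})$ at the wall, the adjoint tangential condition --- this is the origin of the coefficient $(a^*+\alpha)$ in (\ref{MR2}). Carrying out the integration by parts (\ref{integrate}) so that all interior contributions cancel and only the stated boundary combination remains requires decomposing each trace into its normal and tangential parts and controlling the resulting boundary products; the regularity $a^*\in H^1(0,T;L_\infty(\Gamma))$ is precisely what renders these products integrable and justifies the manipulations.
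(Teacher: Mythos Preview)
Your proposal is correct and follows essentially the same route as the paper: invoke Propositions \ref{ex_uniq_adj} and \ref{reg_extra} (Section \ref{sec6}) for the well-posedness and $H^2$-regularity of the adjoint pair $(\mathbf{p}^*,\pi^*)$, use Propositions \ref{Gat} and \ref{Gat1} (Section \ref{sec5}) to differentiate the reduced cost along convex perturbations $(a^*,b^*)+\varepsilon\big((f,g)-(a^*,b^*)\big)$, and then substitute the duality relation (\ref{duality}) of Section \ref{sec7} with $\mathbf{U}=\mathbf{y}^*-\mathbf{y}_d$ to convert $\int_{\Omega_T}(\mathbf{y}^*-\mathbf{y}_d)\cdot\mathbf{w}$ into the boundary expression appearing in (\ref{zvMR2}). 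One small remark: your convexity argument naturally restricts the test pairs to $(f,g)\in\mathcal{A}$, which is the standard formulation; the paper's statement quantifies over all of $\mathcal{H}_p(0,T;\Gamma)$ but in the proof also needs $(a_\varepsilon,b_\varepsilon)$ admissible, so your reading is the right one.
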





\section{State equation}

\label{sec2}

\bigskip \setcounter{equation}{0}
In this section, we study the well-posedness of the state equation (\ref{NSy}%
) and deduce estimates for the state in terms of the control variables. Such
estimates will be fundamental to study the regularity (continuity,
differentiability) of the control-to-state mapping. Our strategy relies on
Galerkin's approximation method, by taking into account some useful results
on elliptic equations and compactness arguments.

Let us introduce the notion of solution to the system (\ref{NSy}), which
should be understood in the weak sense, according to the next definition.

\begin{definition}
\label{1def} The weak solution of the system \eqref{NSy}\ is a divergence
free function $\mathbf{y}\in L_{2}(0,T;H^{1}(\Omega )),$ \ satisfying the
boundary condition \
\begin{equation*}
\mathbf{y}\cdot \mathbf{n}=a\qquad \text{on}\quad \Gamma _{T}
\end{equation*}%
and being the solution of the integral equality%
\begin{align}
\int_{\Omega _{T}}\{-\mathbf{y}\cdot \partial _{t}\boldsymbol{\psi }+&
\left( \left( \mathbf{y\cdot }\nabla \right) \mathbf{y}\right) \cdot
\boldsymbol{\psi }+2\,D(\mathbf{y}):D(\boldsymbol{\psi })\,\}d\mathbf{x}dt
\notag \\
=& \int_{\Gamma _{T}}(b-\alpha (y\cdot {\bm{\tau }}))(\boldsymbol{\psi }%
\cdot {\bm{\tau }})\,d\mathbf{\gamma }\,dt+\int_{\Omega }\mathbf{y}_{0}\cdot
\boldsymbol{\psi }(0)\,d\mathbf{x}  \label{res1}
\end{align}%
for any $\boldsymbol{\psi }\in H^{1}(0,T;V)$ with \ $\boldsymbol{\psi }%
(T)=0. $
\end{definition}

\bigskip

The well-posedness of \ the system \eqref{NSy} will be presented at the end
of this section. Before we establish crucial intermediate results.

Let us introduce the function $\mathbf{a}=\nabla h_{{a}},$ where $h_{a}$ \
is the solution of the system%
\begin{equation}
\left\{
\begin{array}{l}
-\Delta h_{{a}}=0\quad \text{ in }\Omega , \\
\frac{\partial h_{{a}}}{\partial \mathbf{n}}=a\quad \text{ on }\Gamma%
\end{array}%
\right. \qquad \text{a.e. on\quad }(0,T).  \label{ha}
\end{equation}%
The function $\mathbf{a}$ \ satisfies Calderon-Zygmund%
\'{}%
s estimates%
\begin{align}
& ||\mathbf{a}||_{C(\overline{\Omega })}\leqslant C||\mathbf{a}%
||_{W_{p}^{1}(\Omega )}\leqslant C_{p}||a||_{W_{p}^{1-\frac{1}{p}}(\Gamma )},
\notag \\
& ||\partial _{t}\mathbf{a}||_{L_{2}(\Omega )}\leqslant C||\partial
_{t}a||_{W_{2}^{-\frac{1}{2}}(\Gamma )}\qquad \qquad \text{a.e. on\quad }%
(0,T).  \label{cal}
\end{align}%
where the constants $C_{p}$ depend on $2<p<\infty $ \ (see \cite{nec},
Theorem 9.9, p. 230 in \ \cite{cal} and Theorem 1.8, \ p. 12 \& Theorem
1.10, \ p. 15 in \cite{gir}). Accounting the regularity \eqref{eq00sec12}
and the embedding theorem $\ H^{1}(0,T)\hookrightarrow C([0,T])$ \ \ ( also
we refer to Lemma \ref{LM} ) \ we have that%
\begin{eqnarray}
\mathbf{a} &\in &L_{2}(0,T;C(\overline{\Omega })),\qquad \partial _{t}%
\mathbf{a}\in L_{2}(\Omega _{T}),  \notag \\
\mathbf{a} &\in &C([0,T];L_{2}(\Omega )).  \label{calderon}
\end{eqnarray}

\bigskip

The existence of solution for the system \eqref{NSy}\textbf{\ }will be shown
by Galerkin's method. There exists a sequence $\left\{ \mathbf{e}%
_{k}\right\} _{k=1}^{\infty }\subset H^{3}(\Omega ),$ being a basis for $V$
and an orthonormal basis for $H,$ which satisfies the Navier slip boundary
condition
\begin{equation}
\left[ 2D(\mathbf{e}_{k})\mathbf{n}+\alpha \mathbf{e}_{k}\right] \cdot {%
\bm{\tau }}=0  \label{nsbc}
\end{equation}%
on $\Gamma _{T}$ by Lemma 2.2. of \cite{clop} (see also Theorem 1 of \cite%
{S73}).\

For any fixed $n=1,2,....$ let $V_{n}=\mathrm{span}\,\{\mathbf{e}_{1},\ldots
,\mathbf{e}_{n}\}$\ and set $\mathbf{y}_{n}=\mathbf{u}_{n}+\mathbf{a}$ with%
\begin{equation*}
\mathbf{u}_{n}(t)=\sum_{k=1}^{n}c_{k}^{(n)}(t)\ \mathbf{e}_{k}
\end{equation*}%
being the solution of the integral equation%
\begin{eqnarray}
\int_{\Omega }\partial _{t}\mathbf{y}_{n}\cdot \boldsymbol{\psi }\ d\mathbf{x%
} &+&\int_{\Omega }\left\{ \left( \left( \mathbf{y}_{n}\cdot \nabla \right)
\mathbf{y}_{n}\right) \cdot \boldsymbol{\psi }+2\,D(\mathbf{y}_{n}):D(%
\boldsymbol{\psi })\right\} d\mathbf{x}  \notag \\
&=&\int_{\Gamma }(b-\alpha (\mathbf{y}_{n}\cdot {\bm{\tau })})(\boldsymbol{%
\psi }\cdot {\bm{\tau }})\,d\mathbf{\gamma }\,,\text{\qquad }\forall
\boldsymbol{\psi }\in V_{n},  \notag \\
\mathbf{u}_{n}(0) &=&\mathbf{u}_{n,0}.  \label{y1}
\end{eqnarray}%
Here $\mathbf{u}_{n,0}$ is the orthogonal projection of $\mathbf{u}_{0}(%
\mathbf{x})=\mathbf{y}_{0}(\mathbf{x})-\mathbf{a}(0,\mathbf{x})\in H$ \ onto
the space $V_{n}.$

In the following Proposition we will show the solvability of the system %
\eqref{y1}.

\begin{proposition}
\label{existence_state} Under the assumptions (\ref{eq00sec12}) the system %
\eqref{y1} has a solution $\mathbf{y}_{n}=\mathbf{u}_{n}+\mathbf{a}$, such
that%
\begin{align}
& \left\Vert \mathbf{u}_{n}\right\Vert _{L_{\infty }(0,T;L_{2}(\Omega
))}^{2}+\left\Vert D(\mathbf{u}_{n})\right\Vert _{L_{2}(\Omega _{T})}^{2}+||%
\sqrt{\alpha }\mathbf{u}_{n}||_{L_{2}(\Gamma _{T})}^{2}  \notag \\
& \leqslant C(\Vert \mathbf{u}_{n}(0)\Vert _{L_{2}(\Omega )}^{2}+||(a,b)||_{%
\mathcal{H}_{p}(0,T;\Gamma )}^{2}+1)\mathrm{\exp }(C||(a,b)||_{\mathcal{H}%
_{p}(0,T;\Gamma )}^{2})  \label{un}
\end{align}%
and
\begin{equation}
||\partial _{t}\mathbf{y}_{n}||_{L_{2}(0,T;H^{-1}(\Omega ))}^{2}\leqslant
C(\left\Vert \mathbf{y}_{0}\right\Vert _{L_{2}(\Omega )}^{2}+||(a,b)||_{%
\mathcal{H}_{p}(0,T;\Gamma )}^{2}+1).  \label{unt}
\end{equation}
\end{proposition}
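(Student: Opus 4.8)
The plan is to establish existence of the Galerkin solution in two stages: first solve the finite-dimensional ODE system to obtain $\mathbf{y}_n = \mathbf{u}_n + \mathbf{a}$ on a maximal interval, then derive the a priori estimates \eqref{un} and \eqref{unt} that both guarantee global existence (no blow-up) and furnish the stated bounds. For the ODE stage, I would observe that since $\{\mathbf{e}_k\}$ is orthonormal in $H$ and $\mathbf{y}_n = \mathbf{u}_n + \mathbf{a}$ with $\mathbf{a}$ given by \eqref{calderon}, testing \eqref{y1} against each $\mathbf{e}_k$ converts the problem into a system $\dot{c}^{(n)} = F(t, c^{(n)})$ for the coefficients $c^{(n)}_k(t)$. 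The right-hand side $F$ is a polynomial (quadratic) in the $c^{(n)}_k$ with coefficients that are $L_2$-in-time (coming from $\mathbf{a}$, $\partial_t\mathbf{a}$, $\alpha$, and $b$), so $F$ is continuous in $t$ for a.e.\ $t$ and locally Lipschitz in $c^{(n)}$; Carathéodory's existence theorem then yields a local absolutely continuous solution on some $[0, T_n)$.

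\textbf{The key estimate} \eqref{un} is obtained by choosing $\boldsymbol{\psi} = \mathbf{u}_n$ in \eqref{y1} (legitimate since $\mathbf{u}_n(t) \in V_n$). The term $\int_\Omega \partial_t \mathbf{y}_n \cdot \mathbf{u}_n\, d\mathbf{x}$ splits into $\frac{1}{2}\frac{d}{dt}\|\mathbf{u}_n\|_{L_2}^2 + \int_\Omega \partial_t \mathbf{a}\cdot \mathbf{u}_n\, d\mathbf{x}$; the diffusion term gives $2\|D(\mathbf{u}_n)\|_{L_2}^2$ after using $2D(\mathbf{y}_n):D(\mathbf{u}_n) = 2D(\mathbf{u}_n):D(\mathbf{u}_n) + 2D(\mathbf{a}):D(\mathbf{u}_n)$; and the boundary term produces $\|\sqrt{\alpha}\,\mathbf{u}_n\|_{L_2(\Gamma)}^2$ on the left together with lower-order contributions involving $b$, $\alpha$, and $\mathbf{a}$ on the right. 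The convective term $\int_\Omega ((\mathbf{y}_n\cdot\nabla)\mathbf{y}_n)\cdot\mathbf{u}_n\, d\mathbf{x}$ must be handled carefully: after integrating by parts and using $\mbox{div}\,\mathbf{y}_n = 0$, the genuinely troublesome piece is controlled using the Gagliardo--Nirenberg inequality \eqref{LI} (with $q=4$), the trace inequality \eqref{TT}, and the Calderón--Zygmund bounds \eqref{cal}, all absorbed into $\varepsilon\|D(\mathbf{u}_n)\|_{L_2}^2$ via \eqref{ab} with the remainder folded into the forcing. Korn's inequality \eqref{Korn} converts $\|D(\mathbf{u}_n)\|_{L_2}$ control into $\|\mathbf{u}_n\|_{H^1}$ control. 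After collecting terms one arrives at a differential inequality of the form $\frac{d}{dt}\|\mathbf{u}_n\|_{L_2}^2 + c\|D(\mathbf{u}_n)\|_{L_2}^2 + \|\sqrt{\alpha}\,\mathbf{u}_n\|_{L_2(\Gamma)}^2 \le C(t)\|\mathbf{u}_n\|_{L_2}^2 + g(t)$ with $C, g \in L_1(0,T)$ expressible through $\|(a,b)\|_{\mathcal{H}_p}$; Grönwall's lemma then delivers exactly \eqref{un}, with the exponential factor arising from the $\int_0^T C(t)\,dt$ term. Because this bound is uniform on $[0,T_n)$, the solution cannot escape in finite time, so $T_n = T$.

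\textbf{The bound} \eqref{unt} on $\partial_t\mathbf{y}_n$ follows by duality: for an arbitrary $\boldsymbol{\psi}\in V$ with $\|\boldsymbol{\psi}\|_{H^1}\le 1$, decompose $\boldsymbol{\psi} = P_n\boldsymbol{\psi} + (\boldsymbol{\psi} - P_n\boldsymbol{\psi})$ using the orthogonal projection onto $V_n$; only $P_n\boldsymbol{\psi}$ pairs nontrivially with $\partial_t\mathbf{y}_n$ through \eqref{y1}, and $\|P_n\boldsymbol{\psi}\|_{H^1}\le C\|\boldsymbol{\psi}\|_{H^1}$ by the $H^3$-regularity of the basis. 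Estimating each term on the right of \eqref{y1} in the $H^{-1}$ dual norm — the convective term by $\|\mathbf{y}_n\|_{L_4}^2 \lesssim \|\mathbf{y}_n\|_{H^1}^2$, the strain and boundary terms linearly in $\|\mathbf{y}_n\|_{H^1}$ and the data — and then integrating in time while invoking \eqref{un} yields \eqref{unt}. \textbf{The main obstacle I anticipate} is the precise bookkeeping of the boundary integral $\int_\Gamma (b - \alpha(\mathbf{y}_n\cdot\bm{\tau}))(\mathbf{u}_n\cdot\bm{\tau})\,d\mathbf{\gamma}$, since $\mathbf{y}_n\cdot\bm{\tau} = \mathbf{u}_n\cdot\bm{\tau} + \mathbf{a}\cdot\bm{\tau}$ is not prescribed and the slip coefficient $\alpha$ only lies in $L_\infty(\Gamma_T)$; controlling the cross-terms requires the trace inequality \eqref{TT} together with the sign structure that keeps $\|\sqrt{\alpha}\,\mathbf{u}_n\|_{L_2(\Gamma)}^2$ on the favorable side, and it is this term (rather than the convective nonlinearity, which is standard in 2D) that demands the careful splitting described above.
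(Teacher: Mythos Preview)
Your proposal is correct and follows essentially the same approach as the paper: local ODE existence via a Carath\'eodory/Lipschitz argument, the energy identity obtained by testing \eqref{y1} with $\boldsymbol{\psi}=\mathbf{u}_n$, term-by-term estimation using \eqref{LI}--\eqref{Korn} and \eqref{cal}, Gr\"onwall for \eqref{un}, and a duality estimate for \eqref{unt}. The paper organizes the right-hand side of the energy identity into three explicit groups $I_1,I_2,I_3$ and, for \eqref{unt}, first integrates the convective term by parts so that $\int_\Omega((\mathbf{y}_n\!\cdot\!\nabla)\mathbf{y}_n)\!\cdot\!\boldsymbol{\psi}$ is controlled by $\Vert a\Vert_{L_\infty(\Gamma)}\Vert\mathbf{y}_n\Vert_{H^1}+\Vert\mathbf{y}_n\Vert_{L_4}^2$, but these are exactly the mechanisms you outline; your explicit use of the projection $P_n$ for the $H^{-1}$ bound is a slightly more careful bookkeeping of the same step.
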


\begin{proof}
The equation (\ref{y1}) defines a system of ordinary differential equations
in $\mathbb{R}^{2}$ with locally Lipschitz nonlinearities. Hence there
exists a local-in-time solution $\ \mathbf{u}_{n}$ in the space $%
C([0,T_{n}];V_{n})$. The global-in-time existence of $\mathbf{u}_{n}$
follows from a priori estimate \eqref{un}, which is valid for any $n=1,2,....
$\ \ Therefore we focus our attention on the deduction of the estimate %
\eqref{un}.

By firstly writing the equation \eqref{y1}$_{1}$ in terms of $\mathbf{u}_{n}$
and $\mathbf{a}$, taking $\boldsymbol{\psi }=\mathbf{e}_{k}$, multiplying by
$c_{k}^{(n)}$ and summing on $k=1,...,n,$ we derive
\begin{align}
\frac{1}{2}\frac{d}{dt}\int_{\Omega }|\mathbf{u}_{n}|^{2}\,d\mathbf{x}&
+2\int_{\Omega }|D(\mathbf{u}_{n})|^{2}\,d\mathbf{x}+\int_{\Gamma }\alpha (%
\mathbf{u}_{n}\cdot \bm{\tau })^{2}\,\,d\mathbf{\gamma }  \notag \\
& =\int_{\Gamma }\left\{ -\frac{a}{2}(\mathbf{u}_{n}\cdot \bm{\tau }%
)^{2}\,+\left( b-\alpha (\mathbf{a}\cdot \bm{\tau })\right) (\mathbf{u}%
_{n}\cdot \bm{\tau })\right\} \,d\mathbf{\gamma }  \notag \\
& -\int_{\Omega }\left[ \partial _{t}\mathbf{a}+\left( (\mathbf{u}_{n}+%
\mathbf{a})\cdot \nabla \right) \mathbf{a}\right] \cdot \mathbf{u}_{n}\,d%
\mathbf{x}  \notag \\
& -2\int_{\Omega }D(\mathbf{a}):D(\mathbf{u}_{n})\,d\mathbf{x}%
=I_{1}+I_{2}+I_{3}.  \label{es1}
\end{align}%
Considering the inequality \eqref{ab} for an appropriate $\varepsilon >0$
and the inequalities \ \eqref{LI}-\eqref{Korn} and \eqref{calderon}, \ the
terms $I_{1},$ $I_{2}$ and $I_{3}$ are estimated as follows
\begin{align*}
I_{1}& \leqslant (\Vert a\Vert _{L_{\infty }(\Gamma )}\,+1)\Vert \mathbf{u}%
_{n}\Vert _{L_{2}(\Gamma )}^{2}+\Vert b-\alpha (\mathbf{a}\cdot \bm{\tau }%
)\Vert _{L_{2}(\Gamma )}^{2}\, \\
& \leqslant C(||a||_{W_{p}^{1-\frac{1}{p}}(\Gamma )}+1)^{2}||\mathbf{u}%
_{n}||_{L_{2}(\Omega )}^{2}+\frac{1}{3}||D(\mathbf{u}_{n})||_{L_{2}(\Omega
)}^{2} \\
& +C(\Vert b\Vert _{L_{2}(\Gamma )}^{2}+\Vert \alpha \Vert _{L_{\infty
}(\Gamma )}\,||a||_{W_{p}^{1-\frac{1}{p}}(\Gamma )}^{2}\,),
\end{align*}%
\begin{eqnarray*}
I_{2} &\leqslant &\left( \Vert \partial _{t}\mathbf{a}\Vert _{L_{2}(\Omega
)}+\,||\mathbf{a}||_{C(\overline{\Omega })}\Vert \nabla \mathbf{a}\Vert
_{L_{2}(\Omega )}\right) \,||\mathbf{u}_{n}||_{L_{2}(\Omega )}+\,\Vert
\nabla \mathbf{a}\Vert _{L_{2}(\Omega )}\,\Vert \mathbf{u}_{n}\Vert
_{L_{4}(\Omega )}^{2} \\
&\leqslant &(||\partial _{t}a||_{W_{2}^{-\frac{1}{2}}(\Gamma
)}+\,||a||_{W_{p}^{1-\frac{1}{p}}(\Gamma )}^{2})\,||\mathbf{u}%
_{n}||_{L_{2}(\Omega )}\, \\
&&+C\,||a||_{W_{p}^{1-\frac{1}{p}}(\Gamma )}^{2}\,\Vert \mathbf{u}_{n}\Vert
_{L_{2}(\Omega )}^{2}+\frac{1}{3}||D(\mathbf{u}_{n})||_{L_{2}(\Omega )}^{2}
\end{eqnarray*}%
and
\begin{eqnarray*}
I_{3} &\leqslant &C\,\Vert D(\mathbf{a})\Vert _{L_{2}(\Omega )}^{2}\,+\frac{1%
}{3}||D(\mathbf{u}_{n})||_{L_{2}(\Omega )}^{2} \\
&\leqslant &C\,\,||a||_{W_{p}^{1-\frac{1}{p}}(\Gamma )}^{2}+\frac{1}{3}||D(%
\mathbf{u}_{n})||_{L_{2}(\Omega )}^{2}.
\end{eqnarray*}%
Combining the estimates of the terms $I_{1},$ $I_{2}$ and $I_{3}$ and %
\eqref{es1}, we obtain
\begin{eqnarray*}
\frac{1}{2}\frac{d}{dt}||\mathbf{u}_{n}\Vert _{L_{2}(\Omega )}^{2}
&+&\int_{\Omega }|D(\mathbf{u}_{n})|^{2}\,d\mathbf{x}+\int_{\Gamma }\alpha (%
\mathbf{u}_{n}\cdot \bm{\tau })^{2}\,\,d\mathbf{\gamma } \\
&\leqslant &h(t)(||\mathbf{u}_{n}\Vert _{L_{2}(\Omega )}^{2}+||\mathbf{u}%
_{n}\Vert _{L_{2}(\Omega )}+1)
\end{eqnarray*}%
with
\begin{equation*}
h(t)=C\left[ 1+\left( 1+\Vert \alpha \Vert _{L_{\infty }(\Gamma
)}^{2}\right) ||a||_{W_{p}^{1-\frac{1}{p}}(\Gamma )}^{2}+\,||\partial
_{t}a||_{W_{2}^{-\frac{1}{2}}(\Gamma )}^{2}+\Vert b\Vert _{L_{2}(\Gamma
)}^{2}\right]
\end{equation*}%
which belongs to $L_{1}(0,T)$ due to \eqref{cal} and \eqref{eq00sec12}. \
Applying Gronwall's inequality, we deduce \eqref{un}.

Now we show (\ref{unt}). The integration by parts gives
\begin{equation*}
\int_{\Omega }\left( \left( \mathbf{y}_{n}\cdot \nabla \right) \mathbf{y}%
_{n}\right) \cdot \boldsymbol{\psi }\,\,d\mathbf{x}=\int_{\Gamma }a\left(
\mathbf{y}_{n}\cdot \boldsymbol{\psi }\right) \,\,d\mathbf{\gamma }%
-\int_{\Omega }\left( \left( \mathbf{y}_{n}\cdot \nabla \right) \boldsymbol{%
\psi }\right) \cdot \mathbf{y}_{n}\,\,d\mathbf{x}.
\end{equation*}%
Therefore, the identity \eqref{y1} permit to deduce
\begin{align*}
|(\partial _{t}\mathbf{y}_{n},\boldsymbol{\psi })_{L_{2}(\Omega )}|&
\leqslant C\left( \Vert a\Vert _{L_{\infty }(\Gamma )}\Vert \mathbf{y}%
_{n}\Vert _{H^{1}(\Omega )}+\Vert \mathbf{y}_{n}\Vert _{L_{4}(\Omega
)}^{2}\right) \Vert \boldsymbol{\psi }\Vert _{H^{1}(\Omega )} \\
& +||D(\mathbf{y}_{n})||_{L_{2}(\Omega )}||D(\boldsymbol{\psi }%
)||_{L_{2}(\Omega )} \\
& +\left( ||b||_{L_{2}(\Gamma )}+||\sqrt{\alpha }\mathbf{y}%
_{n}||_{L_{2}(\Gamma )}\right) ||\boldsymbol{\psi }||_{L_{2}(\Gamma )}.
\end{align*}%
that gives
\begin{align*}
||\partial _{t}\mathbf{y}_{n}||_{H^{-1}(\Omega )}& =\sup_{\boldsymbol{\psi }%
\in H_{0}^{1}(\Omega )}\left\{ |(\partial _{t}\mathbf{y}_{n},\boldsymbol{%
\psi })_{L_{2}(\Omega )}|:\quad ||\boldsymbol{\psi }||_{H^{1}(\Omega
)}=1\right\} \\
& \leqslant C\bigl(\Vert a\Vert _{L_{_{\infty }}(\Gamma )}\Vert \mathbf{y}%
_{n}\Vert _{H^{1}(\Omega )}+\Vert \mathbf{y}_{n}\Vert _{L_{4}(\Omega )}^{2}
\\
& +||D(\mathbf{y}_{n})||_{L_{2}(\Omega )}+||b||_{L_{2}(\Gamma )}+||\sqrt{%
\alpha }\mathbf{y}_{n}||_{L_{2}(\Gamma )}\bigr).
\end{align*}%
Taking into account \eqref{LI} we have
\begin{align*}
\int_{0}^{T}\left( \Vert \mathbf{y}_{n}\Vert _{L_{4}(\Omega )}^{2}\right)
^{2}\;dt& \leqslant \int_{0}^{T}\left( ||\mathbf{y}_{n}||_{L_{2}(\Omega
)}^{1/2}||\nabla \mathbf{y}_{n}||_{L_{2}(\Omega )}^{1/2}+||\mathbf{y}%
_{n}||_{L_{2}(\Omega )}\right) ^{4}\;dt \\
& \leqslant C(\Vert \mathbf{y}_{n}\Vert _{L_{\infty }\left( 0,T;L_{2}(\Omega
)\right) }^{2}\Vert \mathbf{y}_{n}\Vert _{L_{2}\left( 0,T;H^{1}(\Omega
)\right) }^{2}+\Vert \mathbf{y}_{n}\Vert _{L_{\infty }\left(
0,T;L_{2}(\Omega )\right) }^{4}) \\
& \leqslant C,
\end{align*}%
that yields (\ref{unt}) by \eqref{cal}-\eqref{calderon} and \eqref{un}.
\end{proof}

\bigskip

\begin{teo}
\label{existence_state_y} Assume that the hypothesis (\ref{eq00sec12}) hold,
then the system \eqref{NSy} has a unique weak solution $\mathbf{y}$, such
that
\begin{equation}
\mathbf{y}\in C\left( [0,T];L_{2}(\Omega )\right) \cap L_{2}\left(
0,T;H^{1}(\Omega )\right) ,\quad \partial _{t}\mathbf{y}\in
L_{2}(0,T;H^{-1}(\Omega )).  \label{yyy}
\end{equation}%
Moreover, the following estimates hold%
\begin{align}
& \left\Vert \mathbf{y}\right\Vert _{C([0,T];L_{2}(\Omega ))}^{2}+\left\Vert
\mathbf{y}\right\Vert _{L_{2}(0,T;H^{1}(\Omega ))}^{2}+||\sqrt{\alpha }%
\mathbf{y}||_{L_{2}(\Gamma _{T})}^{2}  \notag \\
& \leqslant C(\left\Vert \mathbf{y}_{0}\right\Vert _{L_{2}(\Omega
)}^{2}+||(a,b)||_{\mathcal{H}_{p}(0,T;\Gamma )}^{2}+1)\mathrm{\exp }%
(C||(a,b)||_{\mathcal{H}_{p}(0,T;\Gamma )}^{2}),  \label{uny}
\end{align}%
\begin{equation}
||\partial _{t}\mathbf{y}||_{L_{2}(0,T;H^{-1}(\Omega ))}^{2}\leqslant
C(\left\Vert \mathbf{y}_{0}\right\Vert _{L_{2}(\Omega )}^{2}+||(a,b)||_{%
\mathcal{H}_{p}(0,T;\Gamma )}^{2}+1).  \label{unty}
\end{equation}
\end{teo}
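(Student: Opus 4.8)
The plan is to pass to the limit in the Galerkin scheme \eqref{y1}, relying on the uniform a priori bounds \eqref{un} and \eqref{unt} already obtained in Proposition \ref{existence_state}. First I would note that, since each $\mathbf{u}_{n}$ satisfies the Navier slip condition \eqref{nsbc}, Korn's inequality \eqref{Korn} upgrades the control of $\|D(\mathbf{u}_{n})\|_{L_{2}(\Omega_{T})}$ coming from \eqref{un} into a uniform bound on $\|\mathbf{u}_{n}\|_{L_{2}(0,T;H^{1}(\Omega))}$; together with \eqref{un} and \eqref{unt} this shows that $\{\mathbf{u}_{n}\}$ is bounded in $L_{\infty}(0,T;L_{2}(\Omega))\cap L_{2}(0,T;H^{1}(\Omega))$ and that $\{\partial_{t}\mathbf{y}_{n}\}$ is bounded in $L_{2}(0,T;H^{-1}(\Omega))$. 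Hence, along a subsequence, $\mathbf{u}_{n}\rightharpoonup\mathbf{u}$ weakly in $L_{2}(0,T;H^{1})$ and weakly-$*$ in $L_{\infty}(0,T;L_{2})$, with $\partial_{t}\mathbf{y}_{n}\rightharpoonup\partial_{t}\mathbf{y}$ weakly in $L_{2}(0,T;H^{-1})$, where $\mathbf{y}=\mathbf{u}+\mathbf{a}$ and $\mathbf{a}$ is the fixed field \eqref{calderon}.

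The key step is the passage to the limit in the nonlinear and boundary terms, which requires strong convergence. Since $\{\mathbf{y}_{n}\}$ is bounded in $\mathcal{W}(0,T;\Omega)$, the Aubin--Lions compactness theorem (using $H^{1}(\Omega)\hookrightarrow\hookrightarrow L_{2}(\Omega)\hookrightarrow H^{-1}(\Omega)$) yields $\mathbf{y}_{n}\to\mathbf{y}$ strongly in $L_{2}(\Omega_{T})$, which suffices to pass to the limit in $\int_{\Omega_{T}}((\mathbf{y}_{n}\cdot\nabla)\mathbf{y}_{n})\cdot\boldsymbol{\psi}$ after integrating by parts to move the derivative onto $\boldsymbol{\psi}$. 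For the boundary integrals --- the delicate point emphasised in the introduction, since the tangential trace is part of the unknown --- I would combine the strong $L_{2}(\Omega_{T})$ convergence with the uniform $L_{2}(0,T;H^{1})$ bound and the trace interpolation inequality \eqref{TT} to deduce that $\mathbf{y}_{n}\cdot\bm{\tau}\to\mathbf{y}\cdot\bm{\tau}$ strongly in $L_{2}(\Gamma_{T})$, so that $\int_{\Gamma_{T}}(b-\alpha(\mathbf{y}_{n}\cdot\bm{\tau}))(\boldsymbol{\psi}\cdot\bm{\tau})$ converges to its limit. Passing to the limit in \eqref{y1}, first for $\boldsymbol{\psi}\in V_{m}$ with $m\le n$ and then by density in $H^{1}(0,T;V)$ with $\boldsymbol{\psi}(T)=0$, yields the identity \eqref{res1}; the constraints $\mathrm{div}\,\mathbf{y}=0$ and $\mathbf{y}\cdot\mathbf{n}=a$ are inherited from $\mathbf{y}_{n}$ (recall $\mathbf{u}_{n}\in V_{n}\subset V$ and $\mathbf{a}\cdot\mathbf{n}=a$), and the initial condition follows from the continuity in time. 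The regularity \eqref{yyy} is then provided by Lemma \ref{LM}, while the estimates \eqref{uny} and \eqref{unty} follow by weak lower semicontinuity of the norms applied to \eqref{un} and \eqref{unt}, after reinserting $\mathbf{y}=\mathbf{u}+\mathbf{a}$ and absorbing the contribution of $\mathbf{a}$ through the Calder\'on--Zygmund bounds \eqref{cal}--\eqref{calderon}.

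For uniqueness, given two solutions $\mathbf{y}^{1},\mathbf{y}^{2}$ with the same data, I would test the difference of the two weak formulations with $\mathbf{w}=\mathbf{y}^{1}-\mathbf{y}^{2}$. Since both share the normal trace $a$, the difference satisfies $\mathbf{w}\cdot\mathbf{n}=0$, i.e. $\mathbf{w}\in V$ a.e. in $t$, so it is an admissible test function. Writing the convective difference as $(\mathbf{w}\cdot\nabla)\mathbf{y}^{1}+(\mathbf{y}^{2}\cdot\nabla)\mathbf{w}$ and integrating by parts, the term $\int_{\Omega}((\mathbf{y}^{2}\cdot\nabla)\mathbf{w})\cdot\mathbf{w}$ reduces, using $\mathrm{div}\,\mathbf{y}^{2}=0$, to the boundary contribution $\tfrac12\int_{\Gamma}a\,|\mathbf{w}|^{2}\,d\mathbf{\gamma}$, which I control by \eqref{TT} together with $a\in L_{\infty}(\Gamma)$. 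The genuinely critical term is $\int_{\Omega}((\mathbf{w}\cdot\nabla)\mathbf{y}^{1})\cdot\mathbf{w}$, bounded by $\|\mathbf{w}\|_{L_{4}(\Omega)}^{2}\,\|\nabla\mathbf{y}^{1}\|_{L_{2}(\Omega)}$; here the two-dimensional Ladyzhenskaya inequality (the case $q=4$ of \eqref{LI}, with $\mathbf{w}_{\Omega}=0$ since $\mathbf{w}\in V$) gives $\|\mathbf{w}\|_{L_{4}}^{2}\le C\|\mathbf{w}\|_{L_{2}}\|\nabla\mathbf{w}\|_{L_{2}}$, which after \eqref{ab} is partly absorbed into the viscous term and partly left as a coefficient multiplying $\|\mathbf{w}\|_{L_{2}}^{2}$ that is integrable in $t$ because $\mathbf{y}^{1}\in L_{2}(0,T;H^{1})$. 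A Gronwall argument with $\mathbf{w}(0)=0$ then forces $\mathbf{w}\equiv0$. I expect the main obstacle to be the careful management of these boundary terms --- both the tangential-trace convergence in the existence part and the term $\tfrac12\int_{\Gamma}a\,|\mathbf{w}|^{2}$ in uniqueness --- rather than the interior estimates, which are standard in the two-dimensional setting.
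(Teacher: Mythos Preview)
Your proposal is correct and follows essentially the same route as the paper: uniform bounds from Proposition \ref{existence_state}, Aubin--Lions/Simon compactness to get strong $L_{2}(\Omega_{T})$ convergence, passage to the limit in the Galerkin system, Lemma \ref{LM} for continuity in time, and the Calder\'on--Zygmund bounds \eqref{cal}--\eqref{calderon} to transfer the estimates from $\mathbf{u}$ to $\mathbf{y}$. The only difference is organisational: the paper does not write out the uniqueness argument here but simply invokes the Lipschitz estimate of Proposition \ref{Lips} (applied with $\widehat{a}=\widehat{b}=0$), whereas you redo that computation directly; your version is exactly the special case of that proposition, so nothing is gained or lost either way. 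Your treatment of the boundary-trace convergence via \eqref{TT} is in fact more explicit than the paper's, which passes to the limit without comment; note that weak $L_{2}(0,T;H^{1})$ convergence already suffices for the linear boundary term since the trace map is bounded and $\alpha(\boldsymbol{\psi}\cdot\bm{\tau})\in L_{2}(\Gamma_{T})$, so strong trace convergence, while correct, is not strictly needed.
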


\begin{proof}
The estimates \eqref{cal}, \eqref{calderon}, (\ref{un}) and (\ref{unt})
imply that the sequence of the functions
\begin{equation*}
\mathbf{u}_{n}\in L_{2}\left( 0,T;V\right) ,\qquad \partial _{t}\mathbf{u}%
_{n}\in L_{2}(0,T;H^{-1}(\Omega )),
\end{equation*}%
are uniformly bounded, for $n=1,2,....$, so, we can apply the compactness
argument of \cite{sim} and take a suitable subsequence of $\left\{ \mathbf{u}%
_{n}\right\} ,$ such that
\begin{eqnarray*}
\mathbf{y}_{n} &=&\mathbf{u}_{n}+\mathbf{a}\rightharpoonup \mathbf{y}=%
\mathbf{u}+\mathbf{a}\quad \mbox{ weakly in
}\ L_{\infty }\left( 0,T;L_{2}(\Omega )\right) \cap L_{2}\left(
0,T;H^{1}(\Omega )\right) ,\qquad \\
\partial _{t}\mathbf{y}_{n} &\rightharpoonup &\partial _{t}\mathbf{y}\qquad
\mbox{ weakly in
}\ L_{2}(0,T;H^{-1}(\Omega )), \\
\mathbf{y}_{n} &\rightarrow &\mathbf{y}\qquad \mbox{ strongly in
}\ L_{2}(\Omega _{T}).
\end{eqnarray*}%
Hence integrating over the time interval $(0,T)$ and passing to the limit as
$n\rightarrow \infty $ in \eqref{y1}, we deduce that the function $\mathbf{y}%
=\mathbf{u}+\mathbf{a}$ is a weak solution of \eqref{NSy} in the sense of
the definition \ref{1def}.

The properties $\mathbf{y}\in L_{2}\left( 0,T;H^{1}(\Omega )\right) $, $%
\partial _{t}\mathbf{y}\in L_{2}(0,T;H^{-1}(\Omega ))$ and Lemma \ref{LM}
yield
\begin{equation*}
\mathbf{y}\in C([0,T];L_{2}(\Omega )),
\end{equation*}%
which gives a meaning for the initial condition for $\mathbf{y}$ in (\ref%
{NSy}). Finally, accounting (\ref{cal})-(\ref{calderon}), we derive (\ref%
{uny})-(\ref{unty}).

The uniqueness result is a direct consequence of Proposition \ref{Lips},
that we will show in the following section.
\end{proof}

\bigskip

\section{Lipschitz continuity of the control-to-state mapping}

\label{sec3}\setcounter{equation}{0} This section is devoted to the study of
the Lipschitz continuity to the state $\mathbf{y}$ as a function of the
control variables $a, b$. This regularity result will be necessary in
Section 7 in order to analyse the G\^ateaux differentiability of this
function. 

\begin{proposition}
\label{Lips} Let $\left( \mathbf{y}_{1},p_{1}\right) $ and $\left( \mathbf{y}%
_{2},p_{2}\right) $ \ be two weak solutions for the system \eqref{NSy} with
two corresponding boundary conditions $a_{1},\;b_{1}$ and $a_{2},$ $b_{2},$
but with the same initial condition $\mathbf{y}_{0}.$ Denoting by $\widehat{%
\mathbf{y}}=\mathbf{y}_{1}-\mathbf{y}_{2}$, we have
\begin{equation}
\left\Vert \widehat{\mathbf{y}}\right\Vert _{C([0,T];L_{2}(\Omega
))}^{2}+\left\Vert D\left( \widehat{\mathbf{y}}\right) \right\Vert
_{L_{2}(\Omega _{T})}^{2}\mathbf{+}||\sqrt{\alpha }\widehat{\mathbf{y}}%
||_{L_{2}(\Gamma _{T})}^{2}\leqslant C||(\widehat{a},\widehat{b})||_{%
\mathcal{H}_{p}(0,T;\Gamma )}^{2}  \label{dif00}
\end{equation}%
with $\widehat{b}=b_{1}-b_{2}$ and $\widehat{a}=a_{1}-a_{2}$.
\end{proposition}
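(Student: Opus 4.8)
The plan is to adapt the energy method that produced the a priori estimate \eqref{un}, now applied to the difference $\widehat{\mathbf{y}}=\mathbf{y}_1-\mathbf{y}_2$. The obstruction is that $\widehat{\mathbf{y}}$ has nonzero normal trace $\widehat{\mathbf{y}}\cdot\mathbf{n}=\widehat{a}$ on $\Gamma$, so it is not an admissible test function in $V$. I would first lift this normal component: let $\widehat{\mathbf{a}}=\nabla h_{\widehat{a}}$ be the harmonic field associated with $\widehat{a}$ through the Neumann problem \eqref{ha}, so that $\widehat{\mathbf{a}}\cdot\mathbf{n}=\widehat{a}$, $\div\,\widehat{\mathbf{a}}=0$, and the Calderon--Zygmund estimates \eqref{cal} give $\|\widehat{\mathbf{a}}\|_{C(\overline{\Omega})}+\|\widehat{\mathbf{a}}\|_{W_p^1(\Omega)}\leqslant C\|\widehat{a}\|_{W_p^{1-1/p}(\Gamma)}$ together with $\|\partial_t\widehat{\mathbf{a}}\|_{L_2(\Omega)}\leqslant C\|\partial_t\widehat{a}\|_{W_2^{-1/2}(\Gamma)}$. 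Setting $\widehat{\mathbf{u}}=\widehat{\mathbf{y}}-\widehat{\mathbf{a}}$, the function $\widehat{\mathbf{u}}(t)$ belongs to $V$ for a.e.\ $t$ and inherits the regularity of $\widehat{\mathbf{y}}$, in particular $\widehat{\mathbf{u}}\in\W(0,T;\Omega)$.

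Subtracting the two weak formulations \eqref{res1} and using the bilinear identity $\mathbf{y}_1\otimes\mathbf{y}_1-\mathbf{y}_2\otimes\mathbf{y}_2=\mathbf{y}_1\otimes\widehat{\mathbf{y}}+\widehat{\mathbf{y}}\otimes\mathbf{y}_2$, the difference satisfies, for a.e.\ $t$ and every $\boldsymbol{\psi}\in V$,
\[
\langle\partial_t\widehat{\mathbf{y}},\boldsymbol{\psi}\rangle+\int_\Omega\left[(\widehat{\mathbf{y}}\cdot\nabla)\mathbf{y}_1+(\mathbf{y}_2\cdot\nabla)\widehat{\mathbf{y}}\right]\cdot\boldsymbol{\psi}\,d\mathbf{x}+2\int_\Omega D(\widehat{\mathbf{y}}):D(\boldsymbol{\psi})\,d\mathbf{x}=\int_\Gamma\left[\widehat{b}-\alpha(\widehat{\mathbf{y}}\cdot\bm{\tau})\right](\boldsymbol{\psi}\cdot\bm{\tau})\,d\mathbf{\gamma}.
\]
I would then test with $\boldsymbol{\psi}=\widehat{\mathbf{u}}$, which is legitimate after the usual integration by parts in time, giving $\langle\partial_t\widehat{\mathbf{u}},\widehat{\mathbf{u}}\rangle=\frac12\frac{d}{dt}\|\widehat{\mathbf{u}}\|_{L_2}^2$. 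Writing everywhere $\widehat{\mathbf{y}}=\widehat{\mathbf{u}}+\widehat{\mathbf{a}}$, the coercive terms $2\|D(\widehat{\mathbf{u}})\|_{L_2}^2$ and $\|\sqrt{\alpha}(\widehat{\mathbf{u}}\cdot\bm{\tau})\|_{L_2(\Gamma)}^2$ are retained on the left, while every contribution containing $\widehat{\mathbf{a}}$, $\partial_t\widehat{\mathbf{a}}$ or $\widehat{b}$ is moved to the right and bounded by $\|(\widehat{a},\widehat{b})\|_{\mathcal{H}_p(0,T;\Gamma)}$ through \eqref{cal}; Korn's second inequality $\|\nabla\widehat{\mathbf{u}}\|_{L_2}^2\leqslant C(\|D(\widehat{\mathbf{u}})\|_{L_2}^2+\|\widehat{\mathbf{u}}\|_{L_2}^2)$ then lets me convert the viscous term into control of $\|\widehat{\mathbf{u}}\|_{H^1}$ and absorb the $\varepsilon$-small gradient terms.

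The delicate step is the convective term, and this is where I expect the main obstacle. Decomposing $(\mathbf{y}_2\cdot\nabla)\widehat{\mathbf{y}}=(\mathbf{y}_2\cdot\nabla)\widehat{\mathbf{u}}+(\mathbf{y}_2\cdot\nabla)\widehat{\mathbf{a}}$ and using $\div\,\mathbf{y}_2=0$ with $\widehat{\mathbf{u}}\cdot\mathbf{n}=0$, the antisymmetric piece does not vanish as in the interior theory but reduces to the boundary integral $\frac12\int_\Gamma a_2(\widehat{\mathbf{u}}\cdot\bm{\tau})^2\,d\mathbf{\gamma}$. This term carries $a_2$ (not $\widehat{a}$), so it cannot be bounded by the data and must instead be \emph{absorbed}: I would estimate it by the trace interpolation inequality \eqref{TT}, $\|\widehat{\mathbf{u}}\|_{L_2(\Gamma)}^2\leqslant C\|\widehat{\mathbf{u}}\|_{L_2(\Omega)}\|\nabla\widehat{\mathbf{u}}\|_{L_2(\Omega)}$, together with $a_2\in L_\infty(\Gamma_T)$ (from $W_p^{1-1/p}(\Gamma)\hookrightarrow C(\Gamma)$ for $p>2$) and \eqref{ab}, sending half of the gradient into the viscous term. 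The remaining piece $\int_\Omega(\widehat{\mathbf{u}}\cdot\nabla)\mathbf{y}_1\cdot\widehat{\mathbf{u}}$ and the $\widehat{\mathbf{a}}$-contributions (the latter after one integration by parts, so that only $\|\widehat{\mathbf{a}}\|_{C(\overline{\Omega})}$ is needed) are handled by the Gagliardo--Nirenberg inequality \eqref{LI} with $q=4$, i.e.\ $\|\widehat{\mathbf{u}}\|_{L_4}^2\leqslant C\|\widehat{\mathbf{u}}\|_{L_2}\|\widehat{\mathbf{u}}\|_{H^1}$, producing a Gronwall coefficient $\|\nabla\mathbf{y}_1\|_{L_2}^2+\|a_2\|_{L_\infty(\Gamma)}^2+\dots$ that lies in $L_1(0,T)$ by the energy bound \eqref{uny}.

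Collecting everything yields a differential inequality of the form
\[
\frac{d}{dt}\|\widehat{\mathbf{u}}\|_{L_2}^2+c\bigl(\|D(\widehat{\mathbf{u}})\|_{L_2}^2+\|\sqrt{\alpha}(\widehat{\mathbf{u}}\cdot\bm{\tau})\|_{L_2(\Gamma)}^2\bigr)\leqslant h(t)\,\|\widehat{\mathbf{u}}\|_{L_2}^2+g(t)\,\|(\widehat{a},\widehat{b})\|_{\mathcal{H}_p(0,T;\Gamma)}^2
\]
with $h,g\in L_1(0,T)$. Since the two solutions share the initial datum, $\widehat{\mathbf{y}}(0)=0$, whence $\widehat{\mathbf{u}}(0)=-\widehat{\mathbf{a}}(0)$ and $\|\widehat{\mathbf{u}}(0)\|_{L_2}\leqslant C\|(\widehat{a},\widehat{b})\|_{\mathcal{H}_p(0,T;\Gamma)}$ by \eqref{calderon}; Gronwall's inequality then closes the estimate for $\widehat{\mathbf{u}}$, the exponential factor being absorbed into the constant $C$ (finite on bounded sets of controls). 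Finally I would return to $\widehat{\mathbf{y}}=\widehat{\mathbf{u}}+\widehat{\mathbf{a}}$ and add the Calderon--Zygmund bounds on $\widehat{\mathbf{a}}$ to recover \eqref{dif00}.
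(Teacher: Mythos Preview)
Your proposal is correct and follows essentially the same route as the paper: lift the normal trace via $\widehat{\mathbf a}=\nabla h_{\widehat a}$, test the difference equation with $\widehat{\mathbf u}=\widehat{\mathbf y}-\widehat{\mathbf a}\in V$, isolate the boundary convective term $-\tfrac12\int_\Gamma a_2(\widehat{\mathbf u}\cdot\bm\tau)^2\,d\gamma$ and absorb it via the trace interpolation \eqref{TT}, estimate the remaining convective and $\widehat{\mathbf a}$--terms with \eqref{LI} and \eqref{cal}, then close by Gronwall and add back $\widehat{\mathbf a}$. The only cosmetic slip is that $a_2$ is in $L_2(0,T;L_\infty(\Gamma))$ rather than $L_\infty(\Gamma_T)$, but you use it correctly when you put $\|a_2\|_{L_\infty(\Gamma)}^2$ into the $L_1(0,T)$ Gronwall coefficient.
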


\begin{proof}
Let us denote $\widehat{\mathbf{a}}=\nabla h_{\widehat{a}},$ where $h_{%
\widehat{a}}$ is the solution of the system (\ref{ha}) with $a=\widehat{a}$.
\ \

We easily verify that the functions $\mathbf{w}=\widehat{\mathbf{y}}-%
\widehat{\mathbf{a}},$ $\widehat{p}=p_{1}-p_{2}$ satisfy the system%
\begin{equation}
\left\{
\begin{array}{l}
\partial _{t}\mathbf{w}+\left( \mathbf{y}_{2}\cdot \nabla \right) \mathbf{w}%
-\bigtriangledown \widehat{p}=\Delta \mathbf{w}+\mathbf{F},\quad \mathrm{div}%
\,\mathbf{w}=0\;\quad \text{ in}\quad \Omega _{T}, \\
\\
\mathbf{w}\cdot \mathbf{n}=\mathbf{0},\;\quad \left[ 2D(\mathbf{w})\,\mathbf{%
n}+\alpha \mathbf{w}\right] \cdot {\bm{\tau }}=\tilde{b}\;\;\quad \text{on}%
\quad \Gamma _{T}, \\
\\
\mathbf{w}(0,\mathbf{x})=-\widehat{\mathbf{a}}(0,\mathbf{x})\;\quad \text{in}%
\quad \Omega%
\end{array}%
\right.  \label{NSV}
\end{equation}%
with $\mathbf{F}=-\partial _{t}\widehat{\mathbf{a}}+\Delta \widehat{\mathbf{a%
}}-\left( \left( \mathbf{w}+\widehat{\mathbf{a}}\right) \cdot \nabla \right)
\mathbf{y}_{1}-\left( \mathbf{y}_{2}\cdot \nabla \right) \widehat{\mathbf{a}}
$ \ and $\tilde{b}=\widehat{b}-\left[ 2D(\widehat{\mathbf{a}})\,\mathbf{n}%
+\alpha \widehat{\mathbf{a}}\right] \cdot ${\textbf{$\bm{\tau }$}}$.$

Therefore multiplying the first equation in \eqref{NSV} by $\mathbf{w}$ and
integrating over $\Omega ,$ we obtain%
\begin{align}
\frac{1}{2}\frac{d}{dt}\int_{\Omega }|\mathbf{w}|^{2}\,d\mathbf{x}&
+2\int_{\Omega }|D(\mathbf{w})|^{2}\,d\mathbf{x+}\int_{\Gamma }\alpha (%
\mathbf{w}\cdot \bm{\tau })^{2}\,d\mathbf{\gamma }  \notag \\
& =\int_{\Gamma }\left\{ -\frac{a_{2}}{2}(\mathbf{w}\cdot \bm{\tau }%
)^{2}\,+\left( \widehat{b}-\alpha (\widehat{\mathbf{a}}\cdot \bm{\tau }%
)\right) (\mathbf{w}\cdot \bm{\tau })\right\} \,d\mathbf{\gamma }  \notag \\
& -\int_{\Omega }\left[ \partial _{t}\widehat{\mathbf{a}}+\left( \left(
\mathbf{w}+\widehat{\mathbf{a}}\right) \cdot \nabla \right) \mathbf{y}_{1}%
\right] \cdot \mathbf{w}\,d\mathbf{x}  \notag \\
& -\int_{\Omega }\left[ \left( \mathbf{y}_{2}\cdot \nabla \right) \widehat{%
\mathbf{a}}\right] \cdot \mathbf{w}\,d\mathbf{x}-\int_{\Omega }2D(\widehat{%
\mathbf{a}}):D(\mathbf{w})\,d\mathbf{x}  \notag \\
& =J_{1}+J_{2}+J_{3}+J_{4}.  \label{eq2}
\end{align}%
Let us estimate the term $J_{1}.$ \ By \eqref{TT}, \eqref{eq00sec12}, %
\eqref{calderon} \ \ and the embedding $W_{p}^{1-\frac{1}{p}}(\Gamma
)\hookrightarrow L_{\infty }(\Gamma ),$ we deduce
\begin{align*}
J_{1}& \leqslant (\Vert a_{2}\Vert _{L_{\infty }(\Gamma )}\,+1)\Vert \mathbf{%
w}\Vert _{L_{2}(\Gamma )}^{2}+C\left( \Vert \widehat{b}\Vert _{L_{2}(\Gamma
)}^{2}+\Vert \alpha \Vert _{L_{\infty }(\Gamma )}^{2}\Vert (\widehat{\mathbf{%
a}}\cdot \bm{\tau })\Vert _{L_{2}(\Gamma )}^{2}\,\right) \\
& \leqslant f_{1}(t)||\mathbf{w}||_{L_{2}(\Omega )}^{2}+\frac{1}{4}||D(%
\mathbf{w})||_{L_{2}(\Omega )}^{2}+C\left( \Vert \widehat{b}\Vert
_{L_{2}(\Gamma )}^{2}+||\widehat{a}||_{W_{p}^{1-\frac{1}{p}}(\Gamma
)}^{2}\,\right)
\end{align*}%
with $f_{1}(t)=C(\Vert a_{2}\Vert _{W_{p}^{1-\frac{1}{p}}(\Gamma
)}\,+1)^{2}\in L_{1}(0,T)$ by \eqref{eq00sec12}. The term $J_{2}$ is
estimated as follows
\begin{eqnarray*}
J_{2} &\leqslant &\left( \Vert \partial _{t}\widehat{\mathbf{a}}\Vert
_{L_{2}(\Omega )}+\,\Vert \widehat{\mathbf{a}}\Vert _{C(\overline{\Omega }%
)}\Vert \nabla \mathbf{y}_{1}\Vert _{L_{2}(\Omega )}\right) \Vert \mathbf{w}%
\Vert _{L_{2}(\Omega )}+\,\Vert \nabla \mathbf{y}_{1}\Vert _{L_{2}(\Omega
)}\,\Vert \mathbf{w}\Vert _{L_{4}(\Omega )}^{2} \\
&\leqslant &\left( \Vert \partial _{t}\widehat{\mathbf{a}}\Vert
_{L_{2}(\Omega )}+\,\Vert \widehat{\mathbf{a}}\Vert _{C(\overline{\Omega }%
)}\right) \sqrt{f_{2}(t)}\Vert \mathbf{w}\Vert _{L_{2}(\Omega )} \\
&&+\Vert \nabla \mathbf{y}_{1}\Vert _{L_{2}(\Omega )}\Vert \mathbf{w}\Vert
_{L_{2}(\Omega )}\Vert \nabla \mathbf{w}\Vert _{L_{2}(\Omega )} \\
&\leqslant &f_{2}(t)\Vert \mathbf{w}\Vert _{L_{2}(\Omega )}^{2}+C\left(
||\partial _{t}\widehat{a}||_{W_{2}^{-\frac{1}{2}}(\Gamma )}^{2}+||\widehat{a%
}||_{W_{p}^{1-\frac{1}{p}}(\Gamma )}^{2}\right) +\frac{1}{4}||D(\mathbf{w}%
)||_{L_{2}(\Omega )}^{2}
\end{eqnarray*}%
with $f_{2}(t)=C(1+\Vert \nabla \mathbf{y}_{1}\Vert _{L_{2}(\Omega
)})^{2}\in L_{1}(0,T)$ by \eqref{yyy}. \ \ Using \eqref{LI} for $\mathbf{v}=%
\mathbf{y}_{2}$\ and \eqref{LI} for $\ \mathbf{v}=\mathbf{w}$, we have%
\begin{eqnarray*}
J_{3} &\leqslant &\,\Vert \mathbf{y}_{2}\Vert _{L_{4}(\Omega )}||\nabla
\widehat{\mathbf{a}}\Vert _{L_{2}(\Omega )}\Vert \mathbf{w}\Vert
_{L_{4}(\Omega )}\leqslant C||\nabla \widehat{\mathbf{a}}\Vert
_{L_{2}(\Omega )}\Vert \mathbf{y}_{2}\Vert _{L_{4}(\Omega )}||\mathbf{w}%
||_{L_{2}(\Omega )}^{1/2}||\nabla \mathbf{w}||_{L_{2}(\Omega )}^{1/2} \\
&\leqslant &C||\nabla \widehat{\mathbf{a}}\Vert _{L_{2}(\Omega )}^{2}+\Vert
\mathbf{y}_{2}\Vert _{L_{4}(\Omega )}^{2}||\mathbf{w}||_{L_{2}(\Omega
)}||\nabla \mathbf{w}||_{L_{2}(\Omega )} \\
&\leqslant &f_{3}(t)\Vert \mathbf{w}\Vert _{L_{2}(\Omega )}^{2}+C||\widehat{a%
}||_{W_{p}^{1-\frac{1}{p}}(\Gamma )}^{2}+\frac{1}{4}||D(\mathbf{w}%
)||_{L_{2}(\Omega )}^{2}
\end{eqnarray*}%
with\ $f_{3}(t)=C\Vert \mathbf{y}_{2}\Vert _{L_{4}(\Omega )}^{4}\leqslant
C\left( ||\mathbf{y}_{2}||_{L_{2}(\Omega )}^{1/2}||\nabla \mathbf{y}%
_{2}||_{L_{2}(\Omega )}^{1/2}+||\mathbf{y}_{2}||_{L_{2}(\Omega )}\right)
^{4}\in L_{1}(0,T)$ by \eqref{LI} and \eqref{yyy}.\ Finally we have%
\begin{eqnarray*}
J_{4} &\leqslant &C\,\Vert D(\widehat{\mathbf{a}})\Vert _{L_{2}(\Omega
)}^{2}\,+\frac{1}{4}||D(\mathbf{w})||_{L_{2}(\Omega )}^{2} \\
&\leqslant &C\,\,||\widehat{a}||_{W_{p}^{1-\frac{1}{p}}(\Gamma )}^{2}+\frac{1%
}{4}||D(\mathbf{u})||_{L_{2}(\Omega )}^{2}.
\end{eqnarray*}%
Combining the above deduced estimates of the terms $J_{1},$ $J_{2}$, $J_{3},$
\ $J_{4}$\ \ and \eqref{eq2}, we obtain
\begin{align}
\frac{d}{dt}||\mathbf{w}\Vert _{L_{2}(\Omega )}^{2}+\int_{\Omega }|D(\mathbf{%
w})|^{2}\,d\mathbf{x}& +\int_{\Gamma }\alpha (\mathbf{w}\cdot \bm{\tau }%
)^{2}\,d\mathbf{\gamma }\leqslant f(t)||\mathbf{w}\Vert _{L_{2}(\Omega )}^{2}
\notag \\
& +C\left\{ \Vert \widehat{b}\Vert _{L_{2}(\Gamma )}^{2}+\,||\partial _{t}%
\widehat{a}||_{W_{2}^{-\frac{1}{2}}(\Gamma )}^{2}+||\widehat{a}||_{W_{p}^{1-%
\frac{1}{p}}(\Gamma )}^{2}\right\}  \notag
\end{align}%
with $f(t)=f_{1}(t)+f_{2}(t)+f_{3}(t)\in L_{1}(0,T)$. Applying Gronwall's
inequality, we deduce
\begin{align}
& \left\Vert \mathbf{w}\right\Vert _{L_{\infty }(0,T;L_{2}(\Omega
))}^{2}+\left\Vert D\left( \mathbf{w}\right) \right\Vert _{L_{2}(\Omega
_{T})}^{2}\mathbf{+}||\sqrt{\alpha }\mathbf{w}||_{L_{2}(\Gamma _{T})}^{2}
\notag \\
& \leqslant C\biggl\{\left\Vert \widehat{\mathbf{a}}(0,\mathbf{x}%
)\right\Vert _{L_{2}(\Omega )}^{2}+\int_{0}^{T}(\Vert \widehat{b}\Vert
_{L_{2}(\Gamma )}^{2}+\,||\partial _{t}\widehat{a}||_{W_{2}^{-\frac{1}{2}%
}(\Gamma )}^{2}+||\widehat{a}||_{W_{p}^{1-\frac{1}{p}}(\Gamma )}^{2})dt%
\biggr\}.  \label{dif00w}
\end{align}

Therefore, taking into account that $\widehat{\mathbf{y}}=\mathbf{w}+%
\widehat{\mathbf{a}}$ and (\ref{cal})-(\ref{calderon}), we derive %
\eqref{dif00}.
\end{proof}

\bigskip

$\hfill $

\section{Linearized state equation}

\label{sec4}\setcounter{equation}{0}

This section deals with the well-posedness of the linearized state equation.
Let us mention that the existence and uniqueness of the linearized state is
of main importance to analyse the G\^{a}teaux derivative of the
control-to-state mapping. Moreover, its regularity plays a key roll in the
deduction of the duality property, relating the linearized state with the
adjoint state. We recall that such duality relation allows to write the
first order derivative of the cost functional in terms of the adjoint state,
yielding the so-called first order optimality condition.

Let us consider the solution $\mathbf{y}$ of the state system (\ref{NSy}),
then the corresponding linearized system reads as follows
\begin{equation}
\left\{
\begin{array}{ll}
\partial _{t}\mathbf{z}+(\mathbf{z}\cdot \nabla )\mathbf{y}+(\mathbf{y}\cdot
\nabla )\mathbf{z}+\nabla \pi =\Delta \mathbf{z,\quad }div\ \mathbf{z}=0, & %
\mbox{in}\ \Omega _{T},\vspace{2mm} \\
\mathbf{z}\cdot \mathbf{n}=f,\quad \left[ 2D(\mathbf{z})\,\mathbf{n}+\alpha
\mathbf{z}\right] \cdot \bm{\tau }=g & \quad \mbox{on}\ \Gamma _{T},\vspace{%
2mm} \\
\mathbf{z}(0)=0 & \quad \mbox{in}\ \Omega%
\end{array}%
\right.  \label{linearized}
\end{equation}%
with the boundary data%
\begin{equation}
(f,g)\in \mathcal{H}_{p}(0,T;\Gamma )\quad \quad \text{with\textit{\ \ }}%
p\in (2,+\infty )\text{ \ \ as in (\ref{eq00sec12}).}  \label{regf}
\end{equation}

Let us define $\mathbf{f}=\nabla h_{f}$ with $h_{f}$ being the solution of
the system (\ref{ha}). Then the function $\mathbf{f}$ \ satisfies the
estimates%
\begin{align}
& ||\mathbf{f}||_{C(\overline{\Omega })}\leqslant C||\mathbf{f}%
||_{W_{p}^{1}(\Omega )}\leqslant C||f||_{W_{p}^{1-\frac{1}{p}}(\Gamma )},
\notag \\
& ||\partial _{t}\mathbf{f}||_{L_{2}(\Omega )}\leqslant C||\partial
_{t}f||_{W_{2}^{-\frac{1}{2}}(\Gamma )}\qquad \qquad \text{a.e. on}\quad
(0,T),  \label{ref}
\end{align}%
and%
\begin{eqnarray}
\mathbf{f} &\in &L_{2}(0,T;C(\overline{\Omega })),\qquad \partial _{t}%
\mathbf{f}\in L_{2}(\Omega _{T}),  \notag \\
\mathbf{f} &\in &C([0,T];L_{2}(\Omega )).  \label{ref2}
\end{eqnarray}

\begin{definition}
The weak solution of the system (\ref{linearized})\ is the divergence free
function $\mathbf{z}\in L_{2}(0,T;H^{1}(\Omega ))$\ satisfying the boundary
condition \
\begin{equation*}
\mathbf{z}\cdot \mathbf{n}=f\qquad \text{on}\quad \Gamma _{T}
\end{equation*}%
and being the solution of the integral equality
\begin{eqnarray*}
\int_{\Omega _{T}}\{-\mathbf{z}\cdot \partial _{t}\boldsymbol{\psi } &+&%
\left[ (\mathbf{z}\cdot \nabla )\mathbf{y}+(\mathbf{y}\cdot \nabla )\mathbf{z%
}\right] \cdot \boldsymbol{\psi }+2\,D(\mathbf{z}):D(\boldsymbol{\psi })\,\}d%
\mathbf{x}dt \\
&=&\int_{\Gamma _{T}}(g-\alpha (\mathbf{z}\cdot {\bm{\tau })})(\boldsymbol{%
\psi }\cdot {\bm{\tau })}\,d\mathbf{\gamma }\,dt,
\end{eqnarray*}%
which is valid for all $\boldsymbol{\psi }\in H^{1}(0,T;V)$: $\boldsymbol{%
\psi }(T)=0.$
\end{definition}

\bigskip

In what follows we will establish the solvability of the system (\ref%
{linearized})

\begin{proposition}
\label{ex_uniq_lin} Under the assumptions \eqref{regf} there exists a unique
weak solution $\mathbf{z}$ for the system (\ref{linearized}), such that
\begin{eqnarray*}
\mathbf{z} &\in &C([0,T];L_{2}(\Omega ))\cap L_{2}\left( 0,T;H^{1}(\Omega
)\right) , \\
\partial _{t}\mathbf{z} &\in &L_{2}(0,T;H^{-1}(\Omega ))
\end{eqnarray*}%
and%
\begin{equation}
\left\Vert \mathbf{z}\right\Vert _{C([0,T];L_{2}(\Omega ))}^{2}+\left\Vert
D\left( \mathbf{z}\right) \right\Vert _{L_{2}(\Omega _{T})}^{2}\mathbf{+}||%
\sqrt{\alpha }\mathbf{z}||_{L_{2}(\Gamma _{T})}^{2}\leqslant C||(f,g)||_{%
\mathcal{H}_{p}(0,T;\Gamma )}^{2}.  \label{eqsa}
\end{equation}
\end{proposition}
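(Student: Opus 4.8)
The plan is to prove Proposition~\ref{ex_uniq_lin} by the same strategy already used for the nonlinear state equation in Section~\ref{sec2}: reduce to a homogeneous-normal-trace problem, build a Galerkin approximation in the basis $\{\mathbf{e}_k\}$ satisfying the Navier slip condition \eqref{nsbc}, derive a uniform energy estimate that yields exactly the bound \eqref{eqsa}, and pass to the limit using compactness. Since the system \eqref{linearized} is linear in $\mathbf{z}$, the a~priori estimate will simultaneously give existence and (applied to the difference of two solutions) uniqueness, so no separate uniqueness argument is needed.

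First I would lift the non-homogeneous normal boundary datum. Setting $\mathbf{z}=\mathbf{w}+\mathbf{f}$ with $\mathbf{f}=\nabla h_f$ as in \eqref{ref}--\eqref{ref2}, the function $\mathbf{w}$ has $\mathbf{w}\cdot\mathbf{n}=0$ and solves a linear problem with a forcing term that is controlled by $\|(f,g)\|_{\mathcal{H}_p(0,T;\Gamma)}$ through the Calderon--Zygmund-type estimates \eqref{ref}. I would then seek $\mathbf{w}_n=\sum_{k=1}^n c_k^{(n)}(t)\,\mathbf{e}_k$ solving the finite-dimensional analogue of the integral identity, with $\mathbf{w}_n(0)=-P_n\mathbf{f}(0)$; local existence follows from the Cauchy--Lipschitz theorem for the resulting linear ODE system, and global existence from the forthcoming energy bound.

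The core step is the energy estimate: taking $\boldsymbol{\psi}=\mathbf{w}_n$ (equivalently multiplying by $c_k^{(n)}$ and summing), I would obtain an identity of the form
\begin{equation*}
\frac{1}{2}\frac{d}{dt}\|\mathbf{w}_n\|_{L_2(\Omega)}^2
+2\|D(\mathbf{w}_n)\|_{L_2(\Omega)}^2
+\int_{\Gamma}\alpha(\mathbf{w}_n\cdot\bm{\tau})^2\,d\mathbf{\gamma}
=\sum_j K_j,
\end{equation*}
where the $K_j$ collect the boundary term from $g$, the friction term, the two linear transport contributions $(\mathbf{z}\cdot\nabla)\mathbf{y}$ and $(\mathbf{y}\cdot\nabla)\mathbf{z}$ rewritten in terms of $\mathbf{w}_n$ and $\mathbf{f}$, and the terms generated by the lift $\mathbf{f}$. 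Each $K_j$ I would bound exactly as in the proofs of Proposition~\ref{existence_state} and Proposition~\ref{Lips}, using \eqref{ab}, the Gagliardo--Nirenberg and trace interpolation inequalities \eqref{LI}--\eqref{TT}, and Korn's inequality \eqref{Korn} (valid on $V$ under the slip condition), absorbing every gradient of $\mathbf{w}_n$ into the dissipation $\|D(\mathbf{w}_n)\|^2$ with small $\varepsilon$. The transport terms are handled via $\int_\Omega((\mathbf{y}\cdot\nabla)\mathbf{w}_n)\cdot\mathbf{w}_n\,d\mathbf{x}=\frac12\int_\Gamma(\mathbf{y}\cdot\mathbf{n})|\mathbf{w}_n|^2\,d\mathbf{\gamma}$, which is controlled because $\mathbf{y}\cdot\mathbf{n}=a\in L_\infty(\Gamma)$, while the coefficient multiplying $\|\mathbf{w}_n\|_{L_2}^2$ is a function $f(t)\in L_1(0,T)$ built from $\|\nabla\mathbf{y}\|_{L_2}$ and $\|\mathbf{y}\|_{L_4}^4$, finite by \eqref{yyy}. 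Gronwall's inequality then delivers the uniform bound matching \eqref{eqsa}, after which the duality estimate for $\|\partial_t\mathbf{w}_n\|_{L_2(0,T;H^{-1})}$ follows as in \eqref{unt}.

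\textbf{The main obstacle} I anticipate is the careful treatment of the linearized transport term $(\mathbf{z}\cdot\nabla)\mathbf{y}$: unlike the quadratic nonlinearity it is not antisymmetric, so it cannot be eliminated by integration by parts and must instead be estimated as $\|\mathbf{w}_n\|_{L_4}\|\nabla\mathbf{y}\|_{L_2}\|\mathbf{w}_n\|_{L_4}$, forcing the $L_4$-norm into a product with $\|\nabla\mathbf{y}\|_{L_2}$ whose time integrability is only borderline; reconciling this with an $L_1(0,T)$ Gronwall coefficient requires splitting the $L_4$-norm via \eqref{LI} and absorbing the resulting $\|\nabla\mathbf{w}_n\|_{L_2}$ factor into the dissipation, exactly the device used for $J_3$ in Proposition~\ref{Lips}. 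Once the uniform estimates are secured, passage to the limit by the compactness argument of~\cite{sim} (weak convergence in $L_2(0,T;H^1)$ and $H^1(0,T;H^{-1})$, strong in $L_2(\Omega_T)$) identifies the limit as the weak solution, Lemma~\ref{LM} gives the continuity in time, and linearity closes uniqueness.
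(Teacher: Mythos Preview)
Your proposal is correct and follows essentially the same route as the paper's proof: the lift $\mathbf{z}=\tilde{\mathbf{z}}+\mathbf{f}$, Galerkin approximation in the basis $\{\mathbf{e}_k\}$ satisfying \eqref{nsbc}, the energy identity tested against $\tilde{\mathbf{z}}_n$, termwise estimates via \eqref{LI}--\eqref{Korn} with Gronwall, and passage to the limit by the compactness of \cite{sim} together with Lemma~\ref{LM}. Your identification of the non-antisymmetric transport term $(\mathbf{z}\cdot\nabla)\mathbf{y}$ as the delicate point and its treatment via \eqref{LI} exactly matches the paper's handling of $J_2$ in the proof.
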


\begin{proof}
Let us consider as in the Section \ref{sec2} the subspace $V_{n}=\mathrm{span%
}\,\{\mathbf{e}_{1},\ldots ,\mathbf{e}_{n}\}$ of $V$ and the sequence $%
\left\{ \mathbf{e}_{k}\right\} _{k=1}^{\infty }\subset H^{3}(\Omega )$ being
the orthogonal basis for $V$ and the orthonormal basis for $H,$ satisfying
the Navier slip boundary condition (\ref{nsbc}).

For any fixed $n=1,2,....$\ we define $\mathbf{z}_{n}=\tilde{\mathbf{z}}_{n}+%
\mathbf{f}$, where
\begin{equation*}
\tilde{\mathbf{z}}_{n}(t)=\sum_{k=1}^{n}r_{k}^{(n)}(t)\ \mathbf{e}_{k}
\end{equation*}%
is the solution for the differential equation
\begin{eqnarray}
\int_{\Omega }\partial _{t}\mathbf{z}_{n}\cdot \boldsymbol{\psi }\ d\mathbf{x%
} &+&\int_{\Omega }\left\{ \left[ (\mathbf{z}_{n}\cdot \nabla )\mathbf{y}+(%
\mathbf{y}\cdot \nabla )\mathbf{z}_{n}\right] \cdot \boldsymbol{\psi }+2\,D(%
\mathbf{z}_{n}):D(\boldsymbol{\psi })\,\right\} d\mathbf{x}  \notag \\
&=&\int_{\Gamma }(g-\alpha (\mathbf{z}_{n}\cdot {\bm{\tau }}))(\boldsymbol{%
\psi }\cdot {\bm{\tau }})\,d\mathbf{\gamma }\,,\text{\qquad }\forall
\boldsymbol{\psi }\in V_{n},  \notag \\
\mathbf{z}_{n}(0) &=&\mathbf{z}_{n,0}.  \label{z}
\end{eqnarray}%
Here $\mathbf{z}_{n,0}$ is the orthogonal projections in $H$ of $\tilde{%
\mathbf{z}}_{0}(\mathbf{x})=\mathbf{z}_{0}(\mathbf{x})-\mathbf{f}(0,\mathbf{x%
})$ \ onto the space $V_{n}.$ Since the equation (\ref{z}) is a system of
linear ordinary differential equations in $\mathbb{R}^{2},$ there exists a
global-in-time solution $\ \tilde{\mathbf{z}}_{n}$ in the space $%
C([0,T];V_{n})$.\

Let us show the validity of (\ref{eqsa}) for $\mathbf{z}=\mathbf{z}_{n}.$ If
we write the equation (\ref{z}) in terms of $\tilde{\mathbf{z}}_{n}$ and
choose the test function $\boldsymbol{\psi }=\tilde{\mathbf{z}}_{n}$, we
deduce%
\begin{align}
\frac{1}{2}\frac{d}{dt}\int_{\Omega }|\tilde{\mathbf{z}}_{n}|^{2}\,d\mathbf{x%
}& +2\int_{\Omega }|D(\tilde{\mathbf{z}}_{n})|^{2}\,d\mathbf{x+}\int_{\Gamma
}\alpha (\tilde{\mathbf{z}}_{n}\cdot \bm{\tau })^{2}\,d\mathbf{\gamma }
\notag \\
& =\int_{\Gamma }\left\{ -\frac{a}{2}(\tilde{\mathbf{z}}_{n}\cdot \bm{\tau }%
)^{2}\,+\left( g-\alpha (\mathbf{f}\cdot \bm{\tau })\right) (\tilde{\mathbf{z%
}}_{n}\cdot \bm{\tau })\right\} \,d\mathbf{\gamma }  \notag \\
& -\int_{\Omega }\left[ \partial _{t}\mathbf{f}+\left( \left( \tilde{\mathbf{%
z}}_{n}+\mathbf{f}\right) \cdot \nabla \right) \mathbf{y}\right] \cdot
\tilde{\mathbf{z}}_{n}\,d\mathbf{x}  \notag \\
& \,-\int_{\Omega }\left[ \left( \mathbf{y}\cdot \nabla \right) \mathbf{f}%
\right] \cdot \tilde{\mathbf{z}}_{n}\,d\mathbf{x-}\int_{\Omega }2D(\mathbf{f}%
):D(\tilde{\mathbf{z}}_{n})\,d\mathbf{x}  \notag \\
& =J_{1}+J_{2}+J_{3}+J_{4}.  \label{eqeq2}
\end{align}%
Let us estimate the terms $J_{1},J_{2}$ and $J_{3}$. We have%
\begin{align*}
J_{1}& \leqslant (\Vert f\Vert _{L_{\infty }(\Gamma )}\,+1)\Vert \tilde{%
\mathbf{z}}_{n}\Vert _{L_{2}(\Gamma )}^{2}+C\left( \Vert g\Vert
_{L_{2}(\Gamma )}^{2}+\Vert \alpha \Vert _{L_{\infty }(\Gamma )}^{2}\Vert (%
\mathbf{f}\cdot \bm{\tau })\Vert _{L_{2}(\Gamma )}^{2}\,\right) \\
& \leqslant C(\Vert f\Vert _{W_{p}^{1-\frac{1}{p}}(\Gamma )}+1)||\tilde{%
\mathbf{z}}_{n}||_{L_{2}(\Omega )}||\nabla \tilde{\mathbf{z}}%
_{n}||_{L_{2}(\Omega )}+C\left( \Vert g\Vert _{L_{2}(\Gamma
)}^{2}+||f||_{W_{p}^{1-\frac{1}{p}}(\Gamma )}^{2}\,\right) \\
& \leqslant h_{1}(t)||\tilde{\mathbf{z}}_{n}||_{L_{2}(\Omega )}^{2}+\frac{1}{%
4}||D(\tilde{\mathbf{z}}_{n})||_{L_{2}(\Omega )}^{2}+C\left( \Vert g\Vert
_{L_{2}(\Gamma )}^{2}+||f||_{W_{p}^{1-\frac{1}{p}}(\Gamma )}^{2}\,\right)
\end{align*}%
with $h_{1}(t)=C(\Vert a\Vert _{W_{p}^{1-\frac{1}{p}}(\Gamma )}+1)^{2}\in
L_{1}(0,T)$ by \eqref{eq00sec12}.%
\begin{eqnarray*}
J_{2} &\leqslant &\left( \Vert \partial _{t}\mathbf{f}\Vert _{L_{2}(\Omega
)}+\,\Vert \mathbf{f}\Vert _{C(\overline{\Omega })}\Vert \nabla \mathbf{y}%
\Vert _{L_{2}(\Omega )}\right) \Vert \tilde{\mathbf{z}}_{n}\Vert
_{L_{2}(\Omega )} \\
&&+\,\Vert \nabla \mathbf{y}\Vert _{L_{2}(\Omega )}\,\Vert \tilde{\mathbf{z}}%
_{n}\Vert _{L_{4}(\Omega )}^{2}\leqslant \left( \Vert \partial _{t}\mathbf{f}%
\Vert _{L_{2}(\Omega )}+\,\Vert \mathbf{f}\Vert _{C(\overline{\Omega }%
)}\right) \sqrt{h_{2}(t)}\Vert \tilde{\mathbf{z}}_{n}\Vert _{L_{2}(\Omega )}
\\
&&+\Vert \nabla \mathbf{y}\Vert _{L_{2}(\Omega )}\Vert \tilde{\mathbf{z}}%
_{n}\Vert _{L_{2}(\Omega )}\Vert \nabla \tilde{\mathbf{z}}_{n}\Vert
_{L_{2}(\Omega )} \\
&\leqslant &h_{2}(t)\Vert \tilde{\mathbf{z}}_{n}\Vert _{L_{2}(\Omega )}^{2}+%
\frac{1}{4}||D(\tilde{\mathbf{z}}_{n})||_{L_{2}(\Omega )}^{2}+\left(
||\partial _{t}f||_{W_{2}^{-\frac{1}{2}}(\Gamma )}^{2}+||f||_{W_{p}^{1-\frac{%
1}{p}}(\Gamma )}^{2}\right)
\end{eqnarray*}%
with $h_{2}(t)=C(1+\Vert \nabla \mathbf{y}\Vert _{L_{2}(\Omega )})^{2}\in
L_{1}(0,T)$ by \eqref{uny}. \ $\ $ Reasoning as in Proposition \ref{Lips} we
derive%
\begin{eqnarray*}
J_{3} &\leqslant &\,\Vert \mathbf{y}\Vert _{L^{4}(\Omega )}||\nabla \mathbf{f%
}\Vert _{L_{2}(\Omega )}\Vert \tilde{\mathbf{z}}_{n}\Vert _{L_{4}(\Omega
)}\leqslant ||\nabla \mathbf{f}\Vert _{L_{2}(\Omega )}|\Vert \mathbf{y}\Vert
_{L^{4}(\Omega )}|\tilde{\mathbf{z}}_{n}||_{L_{2}(\Omega )}^{1/2}||\nabla
\tilde{\mathbf{z}}_{n}||_{L_{2}(\Omega )}^{1/2} \\
&\leqslant &||\nabla \mathbf{f}\Vert _{L_{2}(\Omega )}^{2}+\Vert \mathbf{y}%
\Vert _{L^{4}(\Omega )}^{2}||\tilde{\mathbf{z}}_{n}||_{L_{2}(\Omega
)}||\nabla \tilde{\mathbf{z}}_{n}||_{L_{2}(\Omega )} \\
&\leqslant &||f||_{W_{p}^{1-\frac{1}{p}}(\Gamma )}^{2}+h_{3}(t)\Vert \tilde{%
\mathbf{z}}_{n}\Vert _{L_{2}(\Omega )}^{2}+\frac{1}{4}||D(\tilde{\mathbf{z}}%
_{n})||_{L_{2}(\Omega )}^{2}
\end{eqnarray*}%
with $h_{3}(t)=C\Vert \mathbf{y}\Vert _{L^{4}(\Omega )}^{4}\leqslant C\left(
||\mathbf{y}||_{L_{2}(\Omega )}^{1/2}||\nabla \mathbf{y}||_{L_{2}(\Omega
)}^{1/2}+||\mathbf{y}||_{L_{2}(\Omega )}\right) ^{4}\in L_{1}(0,T)$ \ by %
\eqref{LI} and \eqref{uny}. The last term $J_{4}$ is estimated \ as%
\begin{eqnarray*}
J_{4} &\leqslant &C\,\Vert D(\mathbf{f})\Vert _{L_{2}(\Omega )}^{2}\,+\frac{1%
}{4}||D(\tilde{\mathbf{z}}_{n})||_{L_{2}(\Omega )}^{2} \\
&\leqslant &C\,\,||f||_{W_{p}^{1-\frac{1}{p}}(\Gamma )}^{2}+\frac{1}{4}||D(%
\tilde{\mathbf{z}}_{n})||_{L_{2}(\Omega )}^{2}.
\end{eqnarray*}

Therefore the above deduced estimates of the terms $J_{1},$ $J_{2}$, $J_{3}$%
, $J_{3}$ \ and \eqref{eqeq2} imply the inequality
\begin{align}
\frac{1}{2}\frac{d}{dt}||\tilde{\mathbf{z}}_{n}\Vert _{L_{2}(\Omega
)}^{2}+\int_{\Omega }|D(\tilde{\mathbf{z}}_{n})|^{2}\,d\mathbf{x}&
+\int_{\Gamma }\alpha (\tilde{\mathbf{z}}_{n}\cdot \bm{\tau })^{2}\,d\mathbf{%
\gamma }\leqslant h(t)||\tilde{\mathbf{z}}_{n}\Vert _{L_{2}(\Omega )}^{2}+
\notag \\
& +C\left\{ \Vert g\Vert _{L_{2}(\Gamma )}^{2}+\,||\partial _{t}f||_{W_{2}^{-%
\frac{1}{2}}(\Gamma )}^{2}+||f||_{W_{p}^{1-\frac{1}{p}}(\Gamma
)}^{2}\right\}   \notag
\end{align}%
with $h(t)=h_{1}(t)+h_{2}(t)+h_{3}(t)\in L_{1}(0,T)$. Hence Gronwall's
inequality gives
\begin{eqnarray}
&&\left\Vert \tilde{\mathbf{z}}_{n}\right\Vert _{L_{\infty
}(0,T;L_{2}(\Omega ))}^{2}+\left\Vert D\left( \tilde{\mathbf{z}}_{n}\right)
\right\Vert _{L_{2}(\Omega _{T})}^{2}\mathbf{+}||\sqrt{\alpha }\tilde{%
\mathbf{z}}_{n}||_{L_{2}(\Gamma _{T})}^{2}\leqslant C\biggl\{\left\Vert
\mathbf{f}(0,\mathbf{x})\right\Vert _{L_{2}(\Omega )}^{2}  \notag \\
&&+\int_{0}^{T}\left\{ \Vert g\Vert _{L_{2}(\Gamma )}^{2}+\,||\partial
_{t}f||_{W_{2}^{-\frac{1}{2}}(\Gamma )}^{2}+||f||_{W_{p}^{1-\frac{1}{p}%
}(\Gamma )}^{2}\right\} \ dt\biggr\}.  \label{zf}
\end{eqnarray}%
This estimate and \eqref{z} permit to obtain that the sequence
\begin{equation*}
\partial _{t}\tilde{\mathbf{z}}_{n}\in L_{2}(0,T;H^{-1}(\Omega ))
\end{equation*}%
is uniformly bounded on $n=1,2,....$ Hence using the compactness argument of
\cite{sim}, there exists a suitable subsequence of $\left\{ \tilde{\mathbf{z}%
}_{n}\right\} ,$ such that
\begin{eqnarray}
\tilde{\mathbf{z}}_{n} &\rightarrow &\tilde{\mathbf{z}}\qquad
\mbox{ weakly in
}\ L_{2}(0,T;H^{1}(\Omega )),\qquad   \notag \\
\partial _{t}\tilde{\mathbf{z}}_{n} &\rightarrow &\partial _{t}\tilde{%
\mathbf{z}}\qquad \mbox{ weakly in
}\ L_{2}(0,T;H^{-1}(\Omega )),  \notag \\
\tilde{\mathbf{z}}_{n} &\rightarrow &\tilde{\mathbf{z}}\qquad
\mbox{ strongly in
}\ L_{2}(\Omega _{T}).  \label{conv_z}
\end{eqnarray}%
\ Passing on $n\rightarrow \infty $ in \eqref{z}, we deduce that
\begin{equation*}
\tilde{\mathbf{z}}\in L_{\infty }(0,T;L_{2}(\Omega ))\cap
L_{2}(0,T;H^{1}(\Omega )),\qquad \partial _{t}\tilde{\mathbf{z}}\in
L_{2}(0,T;H^{-1}(\Omega )).
\end{equation*}%
Hence $\mathbf{z}=\tilde{\mathbf{z}}+\mathbf{f}$ is the weak solution of (%
\ref{linearized}), which satisfies \eqref{eqsa} by Lemma \ref{LM}, \eqref{zf}
and \eqref{regf}-\eqref{ref2}.\ \ The uniqueness result follows from the
linearity of the system by taking into account the estimates \eqref{eqsa}.
\end{proof}

\bigskip

\section{G\^{a}teaux differentiability of the control-to-state mapping}

\label{sec5}\setcounter{equation}{0}

To deduce the necessary first-order optimality conditions, we should study
the the G\^{a}teaux differentiability of the cost functional $J$, which
requires the determination of the G\^{a}teaux derivative of the
control-to-state mapping. The goal of this section is to show that the G\^{a}%
teaux derivative of the control-to-state mapping $(a,b)\rightarrow \mathbf{y}
$, at a point $(a,b)$, in any direction $(f,g)$, exists and is given by the
solution of the linearized system (\ref{linearized}).

\begin{proposition}
\label{Gat} For given $(a,b)$ and $\mathbf{y}_{0}$ satisfying (\ref%
{eq00sec12}) and
\begin{equation*}
(f,g)\in \mathcal{H}_{p}(0,T;\Gamma ),
\end{equation*}%
let us consider
\begin{equation*}
a_{\varepsilon }=a+\varepsilon f,\quad b_{\varepsilon }=b+\varepsilon
g\qquad \forall \varepsilon \in (0,1).
\end{equation*}%
If $\left( \mathbf{y},\pi \right) $ and $\left( \mathbf{y}_{\varepsilon
},\pi _{\varepsilon }\right) $ are the solutions of (\ref{NSy})
corresponding to $(a,b,\mathbf{y}_{0})$ and $(a_{\varepsilon
},b_{\varepsilon },\mathbf{y}_{0}),$ respectively, then the following
representation holds
\begin{equation}
\mathbf{y}_{\varepsilon }=\mathbf{y}+\varepsilon \mathbf{z}+\varepsilon \,%
\boldsymbol{\delta }_{\varepsilon }\quad \mbox{   with  }\quad
\lim_{\varepsilon \rightarrow 0}\sup_{t\in {[0,T]}}\left\Vert \boldsymbol{%
\delta }_{\varepsilon }\right\Vert _{L_{2}(\Omega )}^{2}=0,  \label{gateau_1}
\end{equation}%
where
\begin{equation*}
\mathbf{z}\in C([0,T];H)\cap L_{2}(0,T;V)\quad
\end{equation*}%
is the solution of (\ref{linearized}) satisfying the estimates (\ref{eqsa}).
\end{proposition}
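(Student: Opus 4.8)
The plan is to set up the remainder directly by subtracting the appropriate equations and estimating it exactly as in Proposition 4.1 (the linearized problem) but with a forcing term that we must show is $o(\varepsilon)$ in a suitable sense. First I would define $\boldsymbol{\delta}_{\varepsilon}$ by the relation $\mathbf{y}_{\varepsilon}=\mathbf{y}+\varepsilon\mathbf{z}+\varepsilon\boldsymbol{\delta}_{\varepsilon}$, where $\mathbf{y}$ solves the state system for $(a,b)$, $\mathbf{y}_{\varepsilon}$ solves it for $(a_{\varepsilon},b_{\varepsilon})$, and $\mathbf{z}$ solves the linearized system \eqref{linearized} in the direction $(f,g)$. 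Writing $\widehat{\mathbf{y}}_{\varepsilon}=\mathbf{y}_{\varepsilon}-\mathbf{y}$, Proposition \ref{Lips} applied to the two states immediately gives the bound
\begin{equation*}
\|\widehat{\mathbf{y}}_{\varepsilon}\|_{C([0,T];L_{2}(\Omega))}^{2}+\|D(\widehat{\mathbf{y}}_{\varepsilon})\|_{L_{2}(\Omega_{T})}^{2}\leqslant C\,\varepsilon^{2}\|(f,g)\|_{\mathcal{H}_{p}(0,T;\Gamma)}^{2},
\end{equation*}
so that $\widehat{\mathbf{y}}_{\varepsilon}=O(\varepsilon)$ in the energy norm. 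This Lipschitz estimate is the engine that makes everything quantitative.

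Next I would derive the equation satisfied by $\boldsymbol{\delta}_{\varepsilon}$. Subtracting the weak formulation of the state equation for $\mathbf{y}$ from that for $\mathbf{y}_{\varepsilon}$, and then subtracting $\varepsilon$ times the linearized equation, the linear terms cancel by construction and the boundary data $(f,g)$ cancels as well, leaving $\boldsymbol{\delta}_{\varepsilon}$ to satisfy a linearized-type system with homogeneous-in-data right-hand side but with a quadratic remainder coming from the nonlinearity. The key algebraic identity is that the convective terms expand as
\begin{equation*}
\mathrm{div}(\mathbf{y}_{\varepsilon}\otimes\mathbf{y}_{\varepsilon})-\mathrm{div}(\mathbf{y}\otimes\mathbf{y})-\varepsilon\left[(\mathbf{z}\cdot\nabla)\mathbf{y}+(\mathbf{y}\cdot\nabla)\mathbf{z}\right]=\frac{1}{\varepsilon}(\widehat{\mathbf{y}}_{\varepsilon}\cdot\nabla)\widehat{\mathbf{y}}_{\varepsilon}+\text{(linear terms in }\boldsymbol{\delta}_{\varepsilon}),
\end{equation*}
so that $\boldsymbol{\delta}_{\varepsilon}$ solves a linearized system around $\mathbf{y}$ with zero boundary data, zero initial data, and forcing $\mathbf{F}_{\varepsilon}=-\tfrac{1}{\varepsilon}(\widehat{\mathbf{y}}_{\varepsilon}\cdot\nabla)\widehat{\mathbf{y}}_{\varepsilon}$. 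I would then test this system with $\boldsymbol{\delta}_{\varepsilon}$ itself and run the same energy estimate used in Propositions \ref{Lips} and \ref{ex_uniq_lin}: the linear terms are absorbed exactly as before using \eqref{LI}-\eqref{Korn} and Gronwall's inequality, leaving only the forcing term to control.

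The main obstacle, and the crux of the argument, is estimating the quadratic forcing $\mathbf{F}_{\varepsilon}$ and showing the resulting bound tends to zero. After integration by parts the troublesome term becomes $\tfrac{1}{\varepsilon}\int_{\Omega}(\widehat{\mathbf{y}}_{\varepsilon}\cdot\nabla)\boldsymbol{\delta}_{\varepsilon}\cdot\widehat{\mathbf{y}}_{\varepsilon}\,d\mathbf{x}$ (plus a boundary contribution controlled by the trace inequality \eqref{TT} and the smallness of $\widehat{\mathbf{y}}_{\varepsilon}$). Using the Gagliardo-Nirenberg bound \eqref{LI} with $q=4$, this is dominated by $\tfrac{C}{\varepsilon}\|\widehat{\mathbf{y}}_{\varepsilon}\|_{L_{4}(\Omega)}^{2}\|\nabla\boldsymbol{\delta}_{\varepsilon}\|_{L_{2}(\Omega)}$, and after applying \eqref{ab} to split off the $\tfrac14\|D(\boldsymbol{\delta}_{\varepsilon})\|_{L_2}^2$ piece I obtain a driving term of order $\tfrac{1}{\varepsilon^{2}}\|\widehat{\mathbf{y}}_{\varepsilon}\|_{L_{4}(\Omega)}^{4}$ integrated in time. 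By the Lipschitz estimate $\widehat{\mathbf{y}}_{\varepsilon}=O(\varepsilon)$ together with the interpolation $\|\widehat{\mathbf{y}}_{\varepsilon}\|_{L_4}^4\leqslant C\|\widehat{\mathbf{y}}_{\varepsilon}\|_{L_2}^2\|\nabla\widehat{\mathbf{y}}_{\varepsilon}\|_{L_2}^2$, this contributes $\tfrac{1}{\varepsilon^{2}}\cdot O(\varepsilon^{2})\cdot\|\widehat{\mathbf{y}}_{\varepsilon}\|_{L_{\infty}(L_2)}^{2}=O(\varepsilon^{2})\to 0$. Gronwall's inequality then yields $\sup_{t}\|\boldsymbol{\delta}_{\varepsilon}\|_{L_{2}(\Omega)}^{2}\to 0$ as $\varepsilon\to 0$, which is precisely \eqref{gateau_1}; the claimed regularity $\mathbf{z}\in C([0,T];H)\cap L_{2}(0,T;V)$ and estimate \eqref{eqsa} are inherited directly from Proposition \ref{ex_uniq_lin}.
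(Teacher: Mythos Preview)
Your plan is correct and follows the same overall strategy as the paper: derive the equation for the remainder $\boldsymbol{\delta}_{\varepsilon}=\mathbf{z}_{\varepsilon}-\mathbf{z}$, test with $\boldsymbol{\delta}_{\varepsilon}$, absorb the linear convective and boundary pieces exactly as in Propositions~\ref{Lips} and~\ref{ex_uniq_lin}, and close via Gronwall together with the Lipschitz bound \eqref{dif00}. The only difference is how the quadratic remainder is organized. The paper splits the convective difference as
\[
(\mathbf{z}_{\varepsilon}\cdot\nabla)\mathbf{y}_{\varepsilon}-(\mathbf{z}\cdot\nabla)\mathbf{y}
=(\boldsymbol{\delta}_{\varepsilon}\cdot\nabla)\mathbf{y}_{\varepsilon}+(\mathbf{z}\cdot\nabla)(\mathbf{y}_{\varepsilon}-\mathbf{y}),
\]
so the forcing is $(\mathbf{z}\cdot\nabla)(\mathbf{y}_{\varepsilon}-\mathbf{y})$ and the Gronwall coefficient involves $\|\mathbf{z}\|_{L_{4}}^{4}$; the final bound is simply $C\|\mathbf{y}_{\varepsilon}-\mathbf{y}\|_{L_{2}(0,T;H^{1})}^{2}\leqslant C\varepsilon^{2}$. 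You instead use the equivalent rearrangement
\[
(\mathbf{z}_{\varepsilon}\cdot\nabla)\mathbf{y}_{\varepsilon}-(\mathbf{z}\cdot\nabla)\mathbf{y}
=(\boldsymbol{\delta}_{\varepsilon}\cdot\nabla)\mathbf{y}+\tfrac{1}{\varepsilon}(\widehat{\mathbf{y}}_{\varepsilon}\cdot\nabla)\widehat{\mathbf{y}}_{\varepsilon},
\]
linearizing entirely around $\mathbf{y}$, with forcing $\tfrac{1}{\varepsilon}(\widehat{\mathbf{y}}_{\varepsilon}\cdot\nabla)\widehat{\mathbf{y}}_{\varepsilon}$; after your integration by parts and the $L_{4}$ interpolation this yields $\tfrac{1}{\varepsilon^{2}}\int_{0}^{T}\|\widehat{\mathbf{y}}_{\varepsilon}\|_{L_{4}}^{4}\,dt\leqslant C\varepsilon^{2}$. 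The two routes are algebraically equivalent and give the same $O(\varepsilon^{2})$ decay; the paper's version avoids the extra boundary term you pick up when integrating by parts (since $\widehat{\mathbf{y}}_{\varepsilon}\cdot\mathbf{n}=\varepsilon f\neq 0$), while yours has the minor advantage that the Gronwall weight involves only $\mathbf{y}$ and not $\mathbf{z}$.
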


\begin{proof}
It is straightforward to verify that $\mathbf{z}_{\varepsilon }=\frac{%
\mathbf{y}_{\varepsilon }-\mathbf{y}}{\varepsilon }$ and $\tilde{\pi}%
_{\varepsilon }=\frac{\pi _{\varepsilon }-\pi }{\varepsilon }$ satisfy the
system
\begin{equation}
\left\{
\begin{array}{l}
\partial _{t}\mathbf{z}_{\varepsilon }+\left( \mathbf{y}\cdot \nabla \right)
\mathbf{z}_{\varepsilon }+\left( \mathbf{z}_{\varepsilon }\cdot \nabla
\right) \mathbf{y}_{\varepsilon }-\nabla \tilde{\pi}_{\varepsilon }=\Delta
\mathbf{z}_{\varepsilon },\quad \mathrm{div}\,\mathbf{z}_{\varepsilon
}=0\;\quad \text{ in}\quad \Omega _{T}, \\
\\
\mathbf{z}_{\varepsilon }\cdot \mathbf{n}=f,\;\quad \left[ 2D(\mathbf{z}%
_{\varepsilon })\,\mathbf{n}+\alpha \mathbf{z}_{\varepsilon }\right] \cdot {%
\ \mathbf{\bm{\tau }}}=g\;\;\quad \text{on}\quad \Gamma _{T}, \\
\\
\mathbf{z}_{\varepsilon }(0,\mathbf{x})=0\;\quad \text{in}\quad \Omega
\end{array}%
\right.   \label{NSVG}
\end{equation}%
and $\boldsymbol{\delta }_{\varepsilon }=\mathbf{z}_{\varepsilon }-\mathbf{z}
$ fulffills the system
\begin{equation}
\left\{
\begin{array}{l}
\partial _{t}\boldsymbol{\delta }_{\varepsilon }+\left( \mathbf{y}\cdot
\nabla \right) \boldsymbol{\delta }_{\varepsilon }+\left( \boldsymbol{\delta
}_{\varepsilon }\cdot \nabla \right) \mathbf{y}_{\varepsilon }+\left(
\mathbf{z}\cdot \nabla \right) \left( \mathbf{y}_{\varepsilon }-\mathbf{y}%
\right)  \\
\qquad -\nabla \left( \tilde{\pi}_{\varepsilon }-\hat{\pi}\right) =\Delta
\boldsymbol{\delta }_{\varepsilon },\qquad \mathrm{div}\,\boldsymbol{\delta }%
_{\varepsilon }=0\;\quad \text{ in}\quad \Omega _{T}, \\
\\
\boldsymbol{\delta }_{\varepsilon }\cdot \mathbf{n}=0,\;\quad \left[ 2D(%
\boldsymbol{\delta }_{\varepsilon })\,\mathbf{n}+\alpha \boldsymbol{\delta }%
_{\varepsilon }\right] \cdot {\mathbf{\bm{\tau }}}=0\;\;\quad \text{on}\quad
\Gamma _{T}, \\
\\
\boldsymbol{\delta }_{\varepsilon }(0,\mathbf{x})=0\;\quad \text{in}\quad
\Omega .%
\end{array}%
\right.   \label{NS00}
\end{equation}%
Multiplying the first equation of the last system by $\boldsymbol{\delta }%
_{\varepsilon }$ and integrating over $\Omega $, we deduce
\begin{align}
\frac{1}{2}\frac{d}{dt}\int_{\Omega }|\boldsymbol{\delta }_{\varepsilon
}|^{2}\,d\mathbf{x}& +2\int_{\Omega }|D(\boldsymbol{\delta }_{\varepsilon
})|^{2}\,d\mathbf{x}+\int_{\Gamma }\alpha (\boldsymbol{\delta }_{\varepsilon
}\cdot \bm{\tau })^{2}\,\,d\mathbf{\gamma }=-\int_{\Gamma }\frac{a}{2}(%
\boldsymbol{\delta }_{\varepsilon }\cdot \bm{\tau })^{2}\,d\mathbf{\gamma }
\notag \\
& -\int_{\Omega }\left[ \left( \boldsymbol{\delta }_{\varepsilon }\cdot
\nabla \right) \mathbf{y}_{\varepsilon }\right] \cdot \boldsymbol{\delta }%
_{\varepsilon }\,d\mathbf{x}-\int_{\Omega }\left[ \left( \mathbf{z}\cdot
\nabla \right) \left( \mathbf{y}_{\varepsilon }-\mathbf{y}\right) \right]
\cdot \boldsymbol{\delta }_{\varepsilon }\,d\mathbf{x}  \notag \\
& =I_{1}+I_{2}+I_{3}.  \label{er}
\end{align}%
Applying the inequalities \eqref{ab}, \eqref{LI}-\eqref{Korn} and %
\eqref{calderon}, the following estimates hold
\begin{equation*}
I_{1}\leqslant C\Vert a\Vert _{L_{\infty }(\Gamma )}\Vert \boldsymbol{\delta
}_{\varepsilon }\Vert _{L_{2}(\Gamma )}^{2}\leqslant C||a||_{W_{p}^{1-\frac{1%
}{p}}(\Gamma )}^{2}||\boldsymbol{\delta }_{\varepsilon }||_{L_{2}(\Omega
)}^{2}+\frac{1}{3}||D(\boldsymbol{\delta }_{\varepsilon })||_{L_{2}(\Omega
)}^{2},
\end{equation*}%
\begin{equation*}
I_{2}\leqslant C\Vert \mathbf{y}_{\varepsilon }\Vert _{H^{1}(\Omega )}\Vert
\boldsymbol{\delta }_{\varepsilon }\Vert _{L_{4}(\Omega )}^{2}\leqslant
C\Vert \mathbf{y}_{\varepsilon }\Vert _{H^{1}(\Omega )}^{2}\Vert \boldsymbol{%
\delta }_{\varepsilon }\Vert _{L_{2}(\Omega )}^{2}+\frac{1}{3}||D(%
\boldsymbol{\delta }_{\varepsilon })||_{L_{2}(\Omega )}^{2}
\end{equation*}%
and
\begin{align*}
I_{3}& \leqslant C\Vert \mathbf{y}_{\varepsilon }-\mathbf{y}\Vert
_{H^{1}(\Omega )}\Vert \mathbf{z}\Vert _{L_{4}(\Omega )}\Vert \boldsymbol{%
\delta }_{\varepsilon }\Vert _{L_{4}(\Omega )} \\
& \leqslant C\Vert \mathbf{y}_{\varepsilon }-\mathbf{y}\Vert _{H^{1}(\Omega
)}^{2}+C\Vert \mathbf{z}\Vert _{L_{4}(\Omega )}^{4}\Vert \boldsymbol{\delta }%
_{\varepsilon }\Vert _{L_{2}(\Omega )}^{2}+\frac{1}{3}||D(\boldsymbol{\delta
}_{\varepsilon })||_{L_{2}(\Omega )}^{2}.
\end{align*}%
Then we obtain
\begin{align}
\frac{1}{2}\frac{d}{dt}\int_{\Omega }|\boldsymbol{\delta }_{\varepsilon
}|^{2}\,d\mathbf{x}& +\int_{\Omega }|D(\boldsymbol{\delta }_{\varepsilon
})|^{2}\,d\mathbf{x}+\int_{\Gamma }\alpha (\boldsymbol{\delta }_{\varepsilon
}\cdot \bm{\tau })^{2}\,\,d\mathbf{\gamma }  \notag \\
& \leqslant Cf(t)||\boldsymbol{\delta }_{\varepsilon }||_{L_{2}(\Omega
)}^{2}+C\Vert \mathbf{y}_{\varepsilon }-\mathbf{y}\Vert _{H^{1}(\Omega )}^{2}
\notag
\end{align}%
with\ $f(t)=(||a||_{W_{p}^{1-\frac{1}{p}}(\Gamma )}^{2}+\Vert \mathbf{y}%
_{\varepsilon }\Vert _{H^{1}(\Omega )}^{2}+\Vert \mathbf{z}\Vert
_{L_{4}(\Omega )}^{4})\in L_{1}(0,T)$ \ by \eqref{eq00sec12}, \eqref{uny} and%
\begin{equation}
\Vert \mathbf{z}\Vert _{L^{4}(\Omega )}^{4}\leqslant \left( ||\mathbf{z}%
||_{L_{2}(\Omega )}^{1/2}||\nabla \mathbf{z}||_{L_{2}(\Omega )}^{1/2}+||%
\mathbf{z}||_{L_{2}(\Omega )}\right) ^{4}\in L_{1}(0,T)  \label{ra}
\end{equation}%
by \eqref{LI}, \eqref{eqsa}.

Applying Gronwall's inequality and using \eqref{eq00sec12}, we deduce
\begin{align}
\Vert \boldsymbol{\delta }_{\varepsilon }\Vert _{L_{\infty
}(0,T;L_{2}(\Omega ))}^{2}& +\Vert D(\boldsymbol{\delta }_{\varepsilon
})\Vert _{L_{2}(\Omega _{T})}^{2}+\Vert \sqrt{\alpha }\boldsymbol{\delta }%
_{\varepsilon }\Vert _{L_{2}(\Gamma _{T})}^{2}\leqslant C\varepsilon
^{2}\Vert \mathbf{y}_{\varepsilon }-\mathbf{y}\Vert _{L_{2}(0,T;H^{1}(\Omega
)}^{2}  \notag \\
& \leqslant C\varepsilon ^{2}||(f,g)||_{\mathcal{H}_{p}(0,T;\Gamma
)}^{2}\rightarrow 0\text{\qquad as }\varepsilon \rightarrow 0,  \label{errr}
\end{align}%
according to (\ref{dif00}) and (\ref{cal}). On the other hand, using the
same reasoning as for the state and linearized equation and the above
estimates, we can also deduce that
\begin{equation*}
|(\partial _{t}\boldsymbol{\delta }_{\varepsilon },\boldsymbol{\psi }%
)_{L_{2}(\Omega )}|\leqslant \beta (t)||\boldsymbol{\psi }||_{H^{1}(\Omega
)},
\end{equation*}%
with $\beta (t)\in L_{2}(0,T)$, which gives
\begin{equation}
||\partial _{t}\boldsymbol{\delta }_{\varepsilon
}||_{L_{2}(0,T;H^{-1}(\Omega ))}<\infty .  \label{dtG}
\end{equation}%
Finally, (\ref{errr}) and (\ref{dtG}) yield (\ref{gateau_1}).
\end{proof}

\bigskip

\bigskip

As a direct consequence of Proposition \ref{Gat}, we easily derive the
following result on the variation for the cost functional (\ref{cost}).

\begin{proposition}
\label{Gat1} Assume that $(a,b)$, $(f,g)$, $\mathbf{y}_{0}$, $\mathbf{z}$ \
\ and
\begin{equation*}
a_{\varepsilon }=a+\varepsilon f,\quad b_{\varepsilon }=b+\varepsilon
g,\qquad \forall \varepsilon \in (0,1)
\end{equation*}%
satisfy the assumptions of Proposition \ref{Gat}. Then we have
\begin{equation*}
J\left( a_{\varepsilon },b_{\varepsilon },\mathbf{y}_{\varepsilon }\right)
=J\left( a,b,\mathbf{y}\right) +\varepsilon \left\{ \int_{\Omega _{T}}\left(
\mathbf{y}-\mathbf{y}_{d}\right) \cdot \mathbf{z}\,d\mathbf{x}%
dt+\int_{\Gamma _{T}}\left( \lambda _{1}af+\lambda _{2}bg\right) \,d\mathbf{%
\gamma }dt\right\} +o(\varepsilon ),
\end{equation*}%
where $\mathbf{y}$, $\mathbf{y}_{\varepsilon }$ are the solutions of (\ref%
{NSy}), corresponding to $(a,b,\mathbf{y}_{0})$, $(a_{\varepsilon
},b_{\varepsilon },\mathbf{y}_{0})$ and $\mathbf{z}$ is the solution of (\ref%
{linearized}).
\end{proposition}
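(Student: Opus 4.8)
The plan is to compute the difference $J(a_\varepsilon,b_\varepsilon,\mathbf{y}_\varepsilon)-J(a,b,\mathbf{y})$ directly from the definition of the cost functional (\ref{cost}), substitute the representation $\mathbf{y}_\varepsilon=\mathbf{y}+\varepsilon\mathbf{z}+\varepsilon\boldsymbol{\delta}_\varepsilon$ from Proposition \ref{Gat}, and show that all terms except the stated linear-in-$\varepsilon$ contribution are $o(\varepsilon)$.

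First I would treat the two boundary (control) terms, which are explicit quadratics in $a,b$. Using $a_\varepsilon=a+\varepsilon f$ and $b_\varepsilon=b+\varepsilon g$, one expands
\begin{equation*}
\frac{\lambda_1}{2}|a_\varepsilon|^2=\frac{\lambda_1}{2}|a|^2+\varepsilon\,\lambda_1 a f+\frac{\varepsilon^2}{2}\lambda_1|f|^2,
\end{equation*}
and similarly for the $b$-term. Integrating over $\Gamma_T$, the zeroth-order part reproduces the corresponding terms of $J(a,b,\mathbf{y})$, the first-order part yields exactly $\varepsilon\int_{\Gamma_T}(\lambda_1 af+\lambda_2 bg)\,d\mathbf{\gamma}dt$, and the remaining $O(\varepsilon^2)$ piece is absorbed into $o(\varepsilon)$ since $(f,g)\in\mathcal{H}_p(0,T;\Gamma)\subset L_2(\Gamma_T)$.

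Next I would handle the tracking term. Writing $\mathbf{y}_\varepsilon-\mathbf{y}_d=(\mathbf{y}-\mathbf{y}_d)+\varepsilon\mathbf{z}+\varepsilon\boldsymbol{\delta}_\varepsilon$ and expanding the square,
\begin{align*}
\frac{1}{2}\int_{\Omega_T}|\mathbf{y}_\varepsilon-\mathbf{y}_d|^2\,d\mathbf{x}dt
=&\;\frac{1}{2}\int_{\Omega_T}|\mathbf{y}-\mathbf{y}_d|^2\,d\mathbf{x}dt
+\varepsilon\int_{\Omega_T}(\mathbf{y}-\mathbf{y}_d)\cdot\mathbf{z}\,d\mathbf{x}dt\\
&+\varepsilon\int_{\Omega_T}(\mathbf{y}-\mathbf{y}_d)\cdot\boldsymbol{\delta}_\varepsilon\,d\mathbf{x}dt
+\frac{\varepsilon^2}{2}\int_{\Omega_T}|\mathbf{z}+\boldsymbol{\delta}_\varepsilon|^2\,d\mathbf{x}dt.
\end{align*}
The first two terms give, respectively, the zeroth-order contribution $\frac12\int_{\Omega_T}|\mathbf{y}-\mathbf{y}_d|^2$ and the claimed linear term $\varepsilon\int_{\Omega_T}(\mathbf{y}-\mathbf{y}_d)\cdot\mathbf{z}\,d\mathbf{x}dt$. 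The main point to verify is that the two remaining terms are $o(\varepsilon)$: the last term is $O(\varepsilon^2)$ because $\mathbf{z}$ and $\boldsymbol{\delta}_\varepsilon$ are uniformly bounded in $L_2(\Omega_T)$ (by (\ref{eqsa}) and (\ref{gateau_1}) respectively), while the cross term is controlled by Cauchy--Schwarz,
\begin{equation*}
\varepsilon\left|\int_{\Omega_T}(\mathbf{y}-\mathbf{y}_d)\cdot\boldsymbol{\delta}_\varepsilon\,d\mathbf{x}dt\right|
\leqslant\varepsilon\,\|\mathbf{y}-\mathbf{y}_d\|_{L_2(\Omega_T)}\,\|\boldsymbol{\delta}_\varepsilon\|_{L_2(\Omega_T)},
\end{equation*}
and $\|\boldsymbol{\delta}_\varepsilon\|_{L_2(\Omega_T)}\to0$ by (\ref{gateau_1}), so the whole cross term is $\varepsilon\cdot o(1)=o(\varepsilon)$. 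The only genuine work is this last estimate, and it is immediate from the convergence established in Proposition \ref{Gat}; collecting the zeroth-order, first-order, and remainder contributions then gives exactly the stated expansion.
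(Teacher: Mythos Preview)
Your proposal is correct and is exactly the argument the paper has in mind: the paper does not give a separate proof of this proposition, stating only that it is ``a direct consequence of Proposition \ref{Gat},'' and your expansion of the quadratic cost together with the remainder estimate $\|\boldsymbol{\delta}_\varepsilon\|_{L_2(\Omega_T)}\to 0$ from (\ref{gateau_1}) is precisely that direct consequence.
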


\bigskip

\section{Adjoint equation}

\label{sec6}\setcounter{equation}{0}

This section is devoted to the study of the adjoint system. The existence
and uniqueness of the solution is shown by the same approach that we have
considered to study the state and linearized state equations. Namely, we
will use Galerkin's approximations and compactness arguments.

Let $\mathbf{y}$ be the solution of the state equation \eqref{NSy}
corresponding to the given data $(a,b,\mathbf{y}_{0})$. The adjoint system
is given by
\begin{equation}
\left\{
\begin{array}{ll}
-\partial _{t}\mathbf{p}-2D(\mathbf{p})\mathbf{y}+\nabla \pi =\Delta \mathbf{%
p}+\mathbf{U,\qquad }\mathrm{div}\,\mathbf{p}=0 & \quad \mbox{in}\ \Omega
_{T},\vspace{2mm} \\
\mathbf{p}\cdot \mathbf{n}=0\qquad \left[ 2D(\mathbf{p})\mathbf{n}+(\alpha
+a)\mathbf{p}\right] \cdot \bm{\tau }=0 & \quad \mbox{on}\ \Gamma _{T},%
\vspace{2mm} \\
\mathbf{p}(T)=0 & \quad \mbox{in}\ \Omega .%
\end{array}%
\right.   \label{adjoint}
\end{equation}%
%
%
%
%
%
%
%
%
%
%
%
%
%
%
%
%
%
%
%
%
%
%
%

\begin{definition}
A function $\mathbf{p}\in L_{2}(0,T;V)$ is a weak solution of (\ref{adjoint}%
) if the integral equality%
\begin{align}
& \int_{\Omega _{T}}\left\{ \mathbf{p}\cdot \partial _{t}\boldsymbol{\phi }%
-\left( 2D(\mathbf{p})\mathbf{y}\right) \cdot \boldsymbol{\phi }+2D(\mathbf{p%
}):\,D(\boldsymbol{\phi })-\mathbf{U}\cdot \boldsymbol{\phi }\right\} \,d%
\mathbf{x}dt\vspace{2mm}  \notag \\
& =-\int_{\Gamma _{T}}(\alpha +a)(\mathbf{p}\cdot \bm{\tau })(\boldsymbol{%
\phi }\cdot \bm{\tau })\,d\mathbf{\gamma }dt  \label{p}
\end{align}%
is valid for all $\boldsymbol{\phi }\in H^{1}(0,T;V)$: \ $\boldsymbol{\phi }%
(0)=0.$
\end{definition}

\begin{proposition}
\label{ex_uniq_adj} Assume that $\mathbf{U}\in L_{2}(\Omega _{T}).$ Under
the assumptions (\ref{eq00sec12}) there exists a unique weak solution ($%
\mathbf{p,}\pi )$ \ for the system (\ref{adjoint}), such that
\begin{equation*}
\mathbf{p}\in C([0,T];H)\cap L_{2}(0,T;V),\qquad \pi \in
H^{-1}(0,T;L_{2}(\Omega )).
\end{equation*}%
Moreover, the following estimate holds
\begin{equation}
\left\Vert \mathbf{p}\right\Vert _{C([0,T];L_{2}(\Omega ))}^{2}+\left\Vert
D\left( \mathbf{p}\right) \right\Vert _{L_{2}(\Omega _{T})}^{2}\mathbf{+}||%
\sqrt{\alpha }\mathbf{p}||_{L_{2}(\Gamma _{T})}^{2}\leqslant C||\mathbf{U}%
||_{L_{2}(\Omega _{T})}^{2}.  \label{estp}
\end{equation}
\end{proposition}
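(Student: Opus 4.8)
The plan is to reproduce the Galerkin-plus-compactness scheme already used for the state and linearized equations, adapted to the backward-in-time structure of \eqref{adjoint}. With the basis $\{\mathbf{e}_k\}\subset H^{3}(\Omega)$ of $V$ from Section~\ref{sec2}, I would look for $\mathbf{p}_n(t)=\sum_{k=1}^{n}d_k^{(n)}(t)\,\mathbf{e}_k\in V_n$ satisfying, for every $\boldsymbol{\psi}\in V_n$,
\begin{align*}
-\int_\Omega\partial_t\mathbf{p}_n\cdot\boldsymbol{\psi}\,d\mathbf{x}
-\int_\Omega 2D(\mathbf{p}_n)\,\mathbf{y}\cdot\boldsymbol{\psi}\,d\mathbf{x}
&+\int_\Omega 2D(\mathbf{p}_n):D(\boldsymbol{\psi})\,d\mathbf{x}\\
&+\int_\Gamma(\alpha+a)(\mathbf{p}_n\cdot\bm{\tau})(\boldsymbol{\psi}\cdot\bm{\tau})\,d\mathbf{\gamma}
=\int_\Omega\mathbf{U}\cdot\boldsymbol{\psi}\,d\mathbf{x},
\end{align*}
together with the terminal condition $\mathbf{p}_n(T)=0$. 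Since $\{\mathbf{e}_k\}$ is orthonormal in $H$, this is a \emph{linear} system of ordinary differential equations for $(d_k^{(n)})$, with coefficients that are integrable in time because $\mathbf{y}\in L_2(0,T;H^1(\Omega))$ and $\alpha,a\in L_\infty(\Gamma_T)$; hence it has a unique solution on the whole of $[0,T]$, with no blow-up precisely because of linearity.

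The heart of the matter is the uniform energy estimate. Taking $\boldsymbol{\psi}=\mathbf{p}_n$ and integrating over $(t,T)$, the terminal condition gives
\begin{align*}
\tfrac12\|\mathbf{p}_n(t)\|_{L_2(\Omega)}^2
&+\int_t^T\Big(2\|D(\mathbf{p}_n)\|_{L_2(\Omega)}^2+\int_\Gamma\alpha(\mathbf{p}_n\cdot\bm{\tau})^2\,d\mathbf{\gamma}\Big)\,ds\\
&=\int_t^T\Big(\int_\Omega\big(2D(\mathbf{p}_n)\mathbf{y}+\mathbf{U}\big)\cdot\mathbf{p}_n\,d\mathbf{x}-\int_\Gamma a(\mathbf{p}_n\cdot\bm{\tau})^2\,d\mathbf{\gamma}\Big)\,ds.
\end{align*}
The two delicate contributions are the adjoint convective term and the boundary integral carrying $a$. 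For the first I would estimate $|\int_\Omega 2D(\mathbf{p}_n)\mathbf{y}\cdot\mathbf{p}_n|\leqslant C\|D(\mathbf{p}_n)\|_{L_2(\Omega)}\|\mathbf{y}\|_{L_4(\Omega)}\|\mathbf{p}_n\|_{L_4(\Omega)}$ and then use \eqref{LI} together with Korn's inequality \eqref{Korn}—applicable to $\mathbf{p}_n$ since it lies in $V_n$ and the basis satisfies \eqref{nsbc}, so that $(\mathbf{p}_n)_\Omega=0$ and $\|\mathbf{p}_n\|_{L_4(\Omega)}\leqslant C\|\mathbf{p}_n\|_{L_2(\Omega)}^{1/2}\|D(\mathbf{p}_n)\|_{L_2(\Omega)}^{1/2}$; Young's inequality \eqref{yi} then absorbs a small multiple of $\|D(\mathbf{p}_n)\|_{L_2(\Omega)}^2$ and leaves the coefficient $C\|\mathbf{y}\|_{L_4(\Omega)}^4$, which lies in $L_1(0,T)$ by \eqref{LI} and \eqref{uny}. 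For the boundary term I would use the trace interpolation \eqref{TT} and \eqref{Korn} to get $\int_\Gamma a(\mathbf{p}_n\cdot\bm{\tau})^2\leqslant\varepsilon\|D(\mathbf{p}_n)\|_{L_2(\Omega)}^2+C_\varepsilon\|a\|_{L_\infty(\Gamma)}^2\|\mathbf{p}_n\|_{L_2(\Omega)}^2$, where $\|a\|_{L_\infty(\Gamma)}^2\leqslant C\|a\|_{W_p^{1-1/p}(\Gamma)}^2\in L_1(0,T)$ by the embedding $W_p^{1-1/p}(\Gamma)\hookrightarrow L_\infty(\Gamma)$ and \eqref{eq00sec12}, while $\int_\Omega\mathbf{U}\cdot\mathbf{p}_n$ is bounded by $\tfrac12\|\mathbf{U}\|_{L_2(\Omega)}^2+\tfrac12\|\mathbf{p}_n\|_{L_2(\Omega)}^2$. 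After absorbing the dissipation, a backward Gronwall argument yields \eqref{estp} uniformly in $n$, hence uniform bounds in $L_\infty(0,T;H)\cap L_2(0,T;V)$; comparing in the equation then bounds $\partial_t\mathbf{p}_n$ in $L_2(0,T;H^{-1}(\Omega))$.

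The passage to the limit is comparatively routine, precisely because \eqref{adjoint} is \emph{linear} in $\mathbf{p}$. The compactness result of \cite{sim} provides a subsequence with $\mathbf{p}_n\rightharpoonup\mathbf{p}$ weakly in $L_2(0,T;H^1(\Omega))$, $\partial_t\mathbf{p}_n\rightharpoonup\partial_t\mathbf{p}$ weakly in $L_2(0,T;H^{-1}(\Omega))$ and $\mathbf{p}_n\to\mathbf{p}$ strongly in $L_2(\Omega_T)$; weak convergence alone suffices in every term of \eqref{p}, the boundary term being handled by the weak continuity of the trace map $L_2(0,T;H^1(\Omega))\to L_2(\Gamma_T)$ together with $(\alpha+a)(\boldsymbol{\phi}\cdot\bm{\tau})\in L_2(\Gamma_T)$. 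Lemma~\ref{LM} upgrades $\mathbf{p}$ to $C([0,T];H)$, giving sense to $\mathbf{p}(T)=0$, and \eqref{estp} for $\mathbf{p}$ follows by lower semicontinuity. The pressure $\pi\in H^{-1}(0,T;L_2(\Omega))$ is recovered from de Rham's theorem, since $-\partial_t\mathbf{p}-2D(\mathbf{p})\mathbf{y}-\Delta\mathbf{p}-\mathbf{U}$ annihilates divergence-free fields, with the stated time regularity inherited from the other terms. Uniqueness is immediate from linearity: the difference of two solutions solves \eqref{adjoint} with $\mathbf{U}=0$, and the energy identity—legitimate since $\mathbf{p}\in L_2(0,T;V)$ and $\partial_t\mathbf{p}\in L_2(0,T;V')$—forces $\mathbf{p}\equiv0$.

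The main obstacle is the energy estimate, and inside it the control of the adjoint convective term $2D(\mathbf{p})\mathbf{y}$ and of the boundary integral $\int_\Gamma a(\mathbf{p}\cdot\bm{\tau})^2$: both must be arranged so that, after absorbing a small part of $\|D(\mathbf{p}_n)\|_{L_2(\Omega)}^2$, what is left is $\|\mathbf{p}_n\|_{L_2(\Omega)}^2$ times a coefficient in $L_1(0,T)$. This is exactly where the low regularity of the inflow datum, $a\in L_2(0,T;W_p^{1-1/p}(\Gamma))\hookrightarrow L_2(0,T;L_\infty(\Gamma))$, is sharp, and where the interplay of \eqref{LI}, \eqref{TT} and Korn's inequality \eqref{Korn} is indispensable.
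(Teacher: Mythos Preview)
Your proposal is correct and follows essentially the same Galerkin-plus-energy-plus-compactness route as the paper. The only notable deviation is that the paper constructs a \emph{different} Galerkin basis $\{\widetilde{\mathbf{e}}_k\}$, consisting of eigenfunctions of a Stokes problem with the slip coefficient $(a+\alpha)$ rather than $\alpha$ alone; this choice is immaterial for the present proposition (any orthonormal basis of $H$ that is total in $V$ would do), but it is set up here because the \emph{next} proposition on $H^2$-regularity needs to test against $A\mathbf{p}_n=\mathbb{P}_n(-\Delta\mathbf{p}_n)$, and for that the eigenbasis is essential. A second, purely cosmetic difference: the paper first splits $2D(\mathbf{p}_n)\mathbf{y}\cdot\mathbf{p}_n$ via an integration by parts of $(\mathbf{y}\cdot\nabla)\mathbf{p}_n\cdot\mathbf{p}_n$, which produces an extra boundary contribution $-\tfrac12\int_\Gamma a|\mathbf{p}_n|^2\,d\mathbf{\gamma}$ and leaves only $(\nabla^T\mathbf{p}_n)\mathbf{y}\cdot\mathbf{p}_n$ in the bulk, whereas you estimate the full convective term directly; both paths lead to the same $L_1(0,T)$ Gronwall coefficient.
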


\begin{proof}
First, let us notice that according to p. 49-50 of \cite{kel2} there exists
a sequence $\left\{ \widetilde{\mathbf{e}}_{k}\right\} _{k=1}^{\infty
}\subset H^{3}(\Omega ),$ being a basis for $V$ \ and an orthonormal basis
for $H,$ of eigenfunctions of the Stokes problem%
\begin{equation}
\left\{
\begin{array}{ll}
-\Delta \widetilde{\mathbf{e}}_{k}+\nabla \widetilde{\pi }_{k}=\lambda _{k}%
\widetilde{\mathbf{e}}_{k},\mathbf{\qquad }\mbox{div}\,\widetilde{\mathbf{e}}%
_{k}=0, & \mbox{in}{\ \Omega },\vspace{2mm} \\
\widetilde{\mathbf{e}}_{k}\cdot \mathbf{n}=0,\;\quad \left[ 2D(\widetilde{%
\mathbf{e}}_{k})\,\mathbf{n}+(a+\alpha )\widetilde{\mathbf{e}}_{k}\right]
\cdot {\bm{\tau }}=0\;\quad & \text{on}\ \Gamma.%
\end{array}%
\right.  \label{y4}
\end{equation}%
For a more detailed description, we refer to a similar situation described
in \cite{evans}, p. 297-307: Theorem 2, p. 300 and Theorem 5, p. 305 (see
also Definition 1-4 and Theorem 1-16, p. 63 of \cite{aub}).

The existence of solution for the system \eqref{adjoint}\textbf{\ }will be
shown by Galerkin's method. \ For any fixed $n=1,2,....,$ as in Proposition %
\ref{existence_state}, we consider the subspace $\widetilde{V}_{n}=\mathrm{%
span}\,\{\widetilde{\mathbf{e}}_{1},\ldots ,\widetilde{\mathbf{e}}_{n}\}$ of
$V$\ and define
\begin{equation}  \label{GA}
\mathbf{p}_{n}(t)=\sum_{j=1}^{n}s_{j}^{(n)}(t)\ \widetilde{\mathbf{e}}_{j}
\end{equation}
as the solution of the equation%
\begin{eqnarray}
&&\int_{\Omega }\left\{ -\partial _{t}\mathbf{p}_{n}\cdot \boldsymbol{\psi }%
\ -\left( 2D(\mathbf{p}_{n})\mathbf{y}\right) \cdot \boldsymbol{\psi }+2\,D(%
\mathbf{p}_{n}):D(\boldsymbol{\psi })-\mathbf{U}\cdot \boldsymbol{\psi }%
\right\} \,d\mathbf{x}dt\vspace{2mm}\,  \notag \\
&=&-\int_{\Gamma }\left\{ (a+\alpha )(\mathbf{p}_{n}\cdot \bm{\tau })(%
\boldsymbol{\psi }\cdot \bm{\tau })\right\} \,d\mathbf{\gamma },\text{\qquad
}\forall \boldsymbol{\psi }\in V_{n},  \notag \\
&&\mathbf{p}_{n}(T)=\mathbf{0}.  \label{pn}
\end{eqnarray}%
Since the equation (\ref{pn}) is a system of linear ordinary differential
equations in $\mathbb{R}^{n},$ there exists a global-in-time solution $\
\mathbf{p}_{n}$ in the space $C([0,T];V_{n})$.

Now, we show the estimate (\ref{estp}) for $\mathbf{p}=\mathbf{p}_{n}.$
Taking $\boldsymbol{\psi }=\widetilde{\mathbf{e}}_{j}$ in (\ref{pn}),
multiplying it by $s_{j}^{(n)}$ and summing on $j=1,...,n,$ we verify that (%
\ref{pn}) holds for $\boldsymbol{\psi }=\mathbf{p}_{n}$ yielding
\begin{align}
-\frac{1}{2}\frac{d}{dt}\int_{\Omega }|\mathbf{p}_{n}|^{2}\,d\mathbf{x}&
+2\int_{\Omega }|D(\mathbf{p}_{n})|^{2}\,d\mathbf{x+}\int_{\Gamma }\alpha (%
\mathbf{p}_{n}\cdot \bm{\tau })^{2}\,d\mathbf{\gamma }  \notag \\
& =-\int_{\Gamma }\frac{a}{2}|\mathbf{p}_{n}|^{2}\,d\mathbf{\gamma }%
+\int_{\Omega }\left[ \left( \nabla ^{T}\mathbf{p}_{n}\right) \mathbf{y}+%
\mathbf{U}\right] \cdot \mathbf{p}_{n}\,d\mathbf{x}=J_{1}+J_{2}.
\label{eqeq22}
\end{align}%
\

Let us estimate the terms $J_{1}$ and $J_{2}$. We have%
\begin{align*}
J_{1}& \leqslant C\Vert a\Vert _{L_{\infty }(\Gamma )}\Vert \mathbf{p}%
_{n}\Vert _{L_{2}(\Gamma )}^{2}\leqslant C\Vert a\Vert _{W_{p}^{1-\frac{1}{p}%
}(\Gamma )}||\mathbf{p}_{n}||_{L_{2}(\Omega )}||\nabla \mathbf{p}%
_{n}||_{L_{2}(\Omega )} \\
& \leqslant h_{1}(t)||\mathbf{p}_{n}||_{L_{2}(\Omega )}^{2}+\frac{1}{2}||D(%
\mathbf{p}_{n})||_{L_{2}(\Omega )}^{2}
\end{align*}%
with $h_{1}(t)=C\Vert a\Vert {}_{W_{p}^{1-\frac{1}{p}}(\Gamma )}^{2}\in
L_{1}(0,T)$ by \eqref{eq00sec12}. Applying the Gagliardo-Nirenberg-Sobolev
inequality (\ref{LI}) with $q=4$ and Young's inequality (\ref{yi}), we obtain%
\begin{eqnarray*}
J_{2} &\leqslant &\,\Vert \nabla \mathbf{p}_{n}\Vert _{L_{2}(\Omega
)}\,\Vert \mathbf{p}_{n}\Vert _{L_{4}(\Omega )}\Vert \mathbf{y}\Vert
_{L_{4}(\Omega )}+\Vert \mathbf{U}\Vert _{L_{2}(\Omega )}\Vert \mathbf{p}%
_{n}\Vert _{L_{2}(\Omega )} \\
&\leqslant &\Vert \mathbf{y}\Vert _{L_{2}(\Omega )}^{1/2}\Vert \nabla
\mathbf{y}\Vert _{L_{2}(\Omega )}^{1/2}\Vert \mathbf{p}_{n}\Vert
_{L_{2}(\Omega )}^{1/2}\Vert \nabla \mathbf{p}_{n}\Vert _{L_{2}(\Omega
)}^{3/2}+\Vert \mathbf{U}\Vert _{L_{2}(\Omega )}\Vert \mathbf{p}_{n}\Vert
_{L_{2}(\Omega )} \\
&\leqslant &h_{2}(t)\Vert \mathbf{p}_{n}\Vert _{L_{2}(\Omega )}^{2}+\frac{1}{%
2}||D(\mathbf{p}_{n})||_{L_{2}(\Omega )}^{2}+\Vert \mathbf{U}\Vert
_{L_{2}(\Omega )}^{2}
\end{eqnarray*}%
with $h_{2}(t)=C(1+\Vert \mathbf{y}\Vert _{L_{2}(\Omega )}^{2}\Vert \nabla
\mathbf{y}\Vert _{L_{2}(\Omega )}^{2})\in L_{1}(0,T)$ by \eqref{uny}. \
Therefore the above deduced estimates of the terms $J_{1},$ $J_{2}$ \ and %
\eqref{eqeq22} imply
\begin{equation}
-\frac{1}{2}\frac{d}{dt}||\mathbf{p}_{n}\Vert _{L_{2}(\Omega
)}^{2}+2\int_{\Omega }|D(\mathbf{p}_{n})|^{2}\,d\mathbf{x}+\int_{\Gamma
}\alpha (\mathbf{p}_{n}\cdot \bm{\tau })^{2}\,d\mathbf{\gamma }\leqslant
h(t)||\mathbf{p}_{n}\Vert _{L_{2}(\Omega )}^{2}+\Vert \mathbf{U}\Vert
_{L_{2}(\Omega )}^{2}  \notag
\end{equation}%
with $h(t)=h_{1}(t)+h_{2}(t)\in L_{1}(0,T)$ depending only on the data %
\eqref{eq00sec12} of our problem \eqref{NSy}.\ Hence integrating the
obtained inequality over the time interval $(t,T),$ we derive Gronwall's
inequality, which gives
\begin{equation}
\left\Vert \mathbf{p}_{n}\right\Vert _{L_{\infty }(0,T;L_{2}(\Omega
))}^{2}+\left\Vert D\left( \mathbf{p}_{n}\right) \right\Vert _{L_{2}(\Omega
_{T})}^{2}\mathbf{+}||\sqrt{\alpha }\mathbf{p}_{n}||_{L_{2}(\Gamma
_{T})}^{2}\leqslant C\int_{0}^{T}\Vert \mathbf{U}\Vert _{L_{2}(\Omega
)}^{2}\ dt.  \label{p_n}
\end{equation}%
This estimate and \eqref{pn} permit to conclude that the sequence%
\begin{equation*}
\partial _{t}\mathbf{p}_{n}\in L_{2}(0,T;H^{-1}(\Omega ))
\end{equation*}%
is uniformly bounded on $n=1,2,....$ \ which allows to use the compactness
argument of \cite{sim}. Therefore for a suitable subsequence of $\left\{
\mathbf{p}_{n}\right\} ,$ we have that
\begin{eqnarray}
\mathbf{p}_{n} &\rightarrow &\mathbf{p}\qquad \mbox{ weakly in
}\ L_{2}(0,T;H^{1}(\Omega )),\qquad   \notag \\
\partial _{t}\mathbf{p}_{n} &\rightarrow &\partial _{t}\mathbf{p}_{n}\qquad
\mbox{ weakly in
}\ L_{2}(0,T;H^{-1}(\Omega )),  \notag \\
\mathbf{p}_{n} &\rightarrow &\mathbf{p}\qquad \mbox{ strongly in
}\ L_{2}(\Omega _{T}).  \label{conv_p}
\end{eqnarray}%
Taking the limit on $n\rightarrow \infty $ in \eqref{pn}, we derive that
\begin{equation*}
\mathbf{p}\in L_{2}(0,T;H^{1}(\Omega )),\qquad \partial _{t}\mathbf{p}\in
L_{2}(0,T;H^{-1}(\Omega ))
\end{equation*}%
is the weak solution of (\ref{adjoint}), satisfying \eqref{estp}. By the
result given on the page 208 of \cite{tem}, we deduce the existence of the
pressure $\pi \in H^{-1}(0,T;L_{2}(\Omega )).$

The uniqueness follows from the linearity of the system and the estimates %
\eqref{estp}.
\end{proof}

\bigskip

In the next section, we will prove that the adjoint state $\mathbf{p}$ and
the linearized state $\mathbf{z}$ are related through a suitable integration
by parts formula. In order to give a meaning to certain boundary terms that
will appear in that duality relation, it is necessary to improve the
regularity properties of the adjoint state.

\begin{proposition}
\label{reg_extra} Under the assumptions of Proposition \ref{ex_uniq_adj} and
the additional regularity for the data
\begin{equation*}
a,\alpha \in H^{1}(0,T;L_{\infty }(\Gamma )),
\end{equation*}
the pair
\begin{equation}
\mathbf{p}\in C([0,T];L_{2}(\Omega ))\cap L_{2}(0,T;H^{2}(\Omega )),\qquad
\pi \in L_{2}(0,T;H^{1}(\Omega ))  \label{up}
\end{equation}%
satisfies the system (\ref{adjoint}) in the usual sense.
\end{proposition}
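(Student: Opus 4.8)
The plan is to improve the regularity of the weak solution of Proposition \ref{ex_uniq_adj} by performing one additional a priori estimate at the Galerkin level, then passing to the limit and recovering the pressure. I would keep the approximations $\mathbf{p}_n=\sum_{j=1}^{n}s_j^{(n)}\widetilde{\mathbf{e}}_j$ of (\ref{GA})--(\ref{pn}) and exploit that $\{\widetilde{\mathbf{e}}_k\}$ are eigenfunctions of the Navier-slip Stokes problem (\ref{y4}): this is precisely what makes $A\mathbf{p}_n:=\sum_{j=1}^n\lambda_j s_j^{(n)}\widetilde{\mathbf{e}}_j$ an admissible test function in (\ref{pn}) and, at the same time, builds the boundary condition into the operator $A$ instead of leaving it as an inhomogeneous Neumann-type datum, which with only $a+\alpha\in L_\infty(\Gamma)$ would be awkward to control directly.

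The core step is to choose $\boldsymbol{\psi}=A\mathbf{p}_n$ in (\ref{pn}). By the definition of the eigenvalue problem (\ref{y4}) one has $2\int_\Omega D(\mathbf{p}_n):D(A\mathbf{p}_n)\,d\mathbf{x}+\int_\Gamma(a+\alpha)(\mathbf{p}_n\cdot\bm{\tau})(A\mathbf{p}_n\cdot\bm{\tau})\,d\mathbf{\gamma}=\Vert A\mathbf{p}_n\Vert_{L_2(\Omega)}^2$, while the time term yields $\tfrac12\frac{d}{dt}\big(2\Vert D(\mathbf{p}_n)\Vert_{L_2}^2+\int_\Gamma(a+\alpha)(\mathbf{p}_n\cdot\bm{\tau})^2\,d\mathbf{\gamma}\big)$. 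It is exactly in differentiating this boundary bilinear form that the extra hypothesis $a,\alpha\in H^1(0,T;L_\infty(\Gamma))$ is needed, so as to absorb the contribution of $\partial_t(a+\alpha)$. The source term is harmless, $\int_\Omega\mathbf{U}\cdot A\mathbf{p}_n\leqslant\varepsilon\Vert A\mathbf{p}_n\Vert^2+C\Vert\mathbf{U}\Vert_{L_2}^2$, and the dangerous term $\int_\Omega(2D(\mathbf{p}_n)\mathbf{y})\cdot A\mathbf{p}_n$ is estimated through $\Vert 2D(\mathbf{p}_n)\mathbf{y}\Vert_{L_2}\leqslant C\Vert D(\mathbf{p}_n)\Vert_{L_4}\Vert\mathbf{y}\Vert_{L_4}$ and the Gagliardo--Nirenberg inequality (\ref{LI}), which gives $\Vert D(\mathbf{p}_n)\Vert_{L_4}^2\leqslant C\Vert\mathbf{p}_n\Vert_{H^2}\Vert\mathbf{p}_n\Vert_{H^1}$; using the stationary elliptic estimate $\Vert\mathbf{p}_n\Vert_{H^2}\leqslant C\Vert A\mathbf{p}_n\Vert_{L_2}$ for the Navier-slip Stokes operator together with Young's inequality, this is bounded by $\varepsilon\Vert A\mathbf{p}_n\Vert^2+C\Vert\mathbf{y}\Vert_{L_4}^4\Vert\mathbf{p}_n\Vert_{H^1}^2$, where $\Vert\mathbf{y}\Vert_{L_4}^4\in L_1(0,T)$ by (\ref{LI}) and (\ref{uny}). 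After absorbing the $\varepsilon$-terms, controlling $\Vert\mathbf{p}_n\Vert_{H^1}$ by the energy via Korn's inequality (\ref{Korn}), integrating over $(t,T)$ with $\mathbf{p}_n(T)=0$ and applying Gronwall's inequality, I would obtain uniform bounds for $\mathbf{p}_n$ in $L_\infty(0,T;V)\cap L_2(0,T;H^2(\Omega))$. Testing (\ref{pn}) against arbitrary $\boldsymbol{\psi}\in V_n$ and using these bounds then also gives $\partial_t\mathbf{p}_n$ uniformly bounded in $L_2(\Omega_T)$.

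Passing to the limit $n\to\infty$ by the compactness argument of \cite{sim}, exactly as in Proposition \ref{ex_uniq_adj}, the weak limit inherits $\mathbf{p}\in L_\infty(0,T;V)\cap L_2(0,T;H^2(\Omega))$ and $\partial_t\mathbf{p}\in L_2(\Omega_T)$, and by uniqueness it coincides with the solution already constructed. Since $H^2(\Omega)\hookrightarrow W^{1,4}(\Omega)$ in dimension two, the terms $\Delta\mathbf{p}$ and $2D(\mathbf{p})\mathbf{y}$ both belong to $L_2(\Omega_T)$, so the identity $\nabla\pi=\partial_t\mathbf{p}+2D(\mathbf{p})\mathbf{y}+\Delta\mathbf{p}+\mathbf{U}$ exhibits $\nabla\pi$ as an $L_2(\Omega_T)$ vector field, and the de Rham-type result on p.~208 of \cite{tem} upgrades the pressure to $\pi\in L_2(0,T;H^1(\Omega))$. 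Finally $\mathbf{p}\in L_2(0,T;H^2)$ together with $\partial_t\mathbf{p}\in L_2(\Omega_T)$ yields $\mathbf{p}\in C([0,T];L_2(\Omega))$, and all the identities in (\ref{adjoint}) now hold pointwise a.e., the boundary conditions being attained in the sense of traces, which is the asserted regularity (\ref{up}).

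The main obstacle is the combination of two borderline features. The first is the critical term $2D(\mathbf{p})\mathbf{y}$: in two dimensions its $L_2$-norm sits exactly at the threshold of what can be absorbed into $\Vert A\mathbf{p}_n\Vert^2$, so the Gagliardo--Nirenberg estimate together with the integrability $\Vert\mathbf{y}\Vert_{L_4}^4\in L_1(0,T)$ must be used sharply. The second, and more delicate, is that the friction-type coefficient $a+\alpha$ enters the boundary condition of the Stokes operator and depends on time, so differentiating the associated boundary bilinear form in the $A\mathbf{p}_n$-estimate is only legitimate under the additional regularity $a,\alpha\in H^1(0,T;L_\infty(\Gamma))$; keeping careful track of these boundary contributions, and of the corresponding stationary estimate $\Vert\mathbf{v}\Vert_{H^2}\leqslant C\Vert A\mathbf{v}\Vert_{L_2}$ for the Navier-slip Stokes problem, is the technical heart of the proof.
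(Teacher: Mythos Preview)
Your proposal is correct and follows essentially the same route as the paper: test the Galerkin system against $A\mathbf{p}_n$, exploit the self-adjointness of the Navier-slip Stokes operator to produce the energy $\Vert D(\mathbf{p}_n)\Vert_{L_2}^2+\tfrac12\int_\Gamma(a+\alpha)|\mathbf{p}_n\cdot\bm{\tau}|^2$, use the hypothesis $a,\alpha\in H^1(0,T;L_\infty(\Gamma))$ to control the commutator with $\partial_t$, and close via Gronwall and the elliptic estimate $\Vert\mathbf{p}_n\Vert_{H^2}\leqslant C\Vert A\mathbf{p}_n\Vert_{L_2}$. The only cosmetic difference is the interpolation chosen for the critical term $\int(2D(\mathbf{p}_n)\mathbf{y})\cdot A\mathbf{p}_n$: you use the $L_4\times L_4$ H\"older splitting with Gronwall coefficient $\Vert\mathbf{y}\Vert_{L_4}^4\in L_1(0,T)$, whereas the paper uses $L_3\times L_6$ with coefficient $\Vert\mathbf{y}\Vert_{L_6}^3\in L_1(0,T)$; both close equally well in two dimensions.
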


\begin{proof}
Let us consider Galerkin's approximations $\mathbf{p}_{n}$ defined in (\ref%
{GA})-(\ref{pn}). Since the unction $\mathbf{p}_{n}(t,\cdot )\in
H^{3}(\Omega )\cap V$ \ fulfills Navier's boundary condition (see (\ref%
{adjoint})), then integrating by parts the equality (\ref{pn}), we obtain
\begin{eqnarray}
\int_{\Omega }\left( \partial _{t}\mathbf{p}_{n}+2D(\mathbf{p}_{n})\mathbf{y}%
+\triangle \mathbf{p}_{n}+\mathbf{U}\right) \cdot \boldsymbol{\psi \ }d%
\mathbf{x} &=&0,\text{\qquad }\forall \boldsymbol{\psi }\in V_{n},  \notag \\
\mathbf{p}_{n}(T) &=&\mathbf{0}.  \label{help}
\end{eqnarray}%
Let us introduce the Helmholtz projector $\mathbb{P}_{n}:L_{2}(\Omega
)\longrightarrow \widetilde{V}_{n}$ of $V$ and define the function $A\mathbf{%
p}_{n}=\mathbb{P}_{n}\left( -\triangle \mathbf{p}_{n}\right) =-\triangle
\mathbf{p}_{n}+\nabla \widehat{\pi }_{n}\in \widetilde{V}_{n}$ for some $%
\widehat{\pi }_{n}\in H^{1}(\Omega ).$

Taking $\boldsymbol{\psi }=\widetilde{\mathbf{e}}_{j}$ in (\ref{help}),
multiplying it by $\lambda _{j}s_{j}^{(n)}$ and summing on $j=1,...,n,$ we
verify that (\ref{help}) is valid for the test function $\boldsymbol{\psi }=A%
\mathbf{p}_{n}$, that implies the following equality%
\begin{align}
-\int_{\Omega }\partial _{t}\mathbf{p}_{n}\cdot A\mathbf{p}_{n}\,d\mathbf{x}%
& +\int_{\Omega }|A\mathbf{p}_{n}|^{2}\,d\mathbf{x}  \notag \\
& =\int_{\Omega }\mathbf{U\cdot }A\mathbf{p}_{n}d\mathbf{x+}\int_{\Omega
}\left( 2D(\mathbf{p}_{n})\mathbf{y}\right) \cdot A\mathbf{p}_{n}d\mathbf{x}.
\notag
\end{align}%
Applying (\ref{integrate}) and accounting $\mathbf{p}_{n}\cdot \mathbf{n}=0$
on $\Gamma ,$ \ we have%
\begin{eqnarray*}
\int_{\Omega }\partial _{t}\mathbf{p}_{n}\cdot A\mathbf{p}_{n}\ d\mathbf{x}
&=&-\int_{\Gamma }\left( 2D(\mathbf{p}_{n})\mathbf{n}\right) \cdot \partial
_{t}\mathbf{p}_{n}\ d\mathbf{\gamma }+\int_{\Omega }2\,D(\mathbf{p}%
_{n}):D(\partial _{t}\mathbf{p}_{n})\,d\mathbf{x} \\
&=&-\int_{\Gamma }\left[ \left( 2D(\mathbf{p}_{n})\mathbf{n}\right) \cdot %
\bm{\tau }\right] \partial _{t}(\mathbf{p}_{n}\cdot \bm{\tau })\ d\mathbf{%
\gamma }+\frac{d}{dt}\left( \int_{\Omega }\,|D(\mathbf{p}_{n})|^{2}\,d%
\mathbf{x}\right) \\
&=&\int_{\Gamma }(\alpha +a)(\mathbf{p}_{n}\cdot \bm{\tau })\partial _{t}(%
\mathbf{p}_{n}\cdot \bm{\tau })\ d\mathbf{\gamma }+\frac{d}{dt}\left(
\int_{\Omega }\,|D(\mathbf{p}_{n})|^{2}\,d\mathbf{x}\right) \\
&=&\frac{1}{2}\int_{\Gamma }(\alpha +a)\partial _{t}|(\mathbf{p}_{n}\cdot %
\bm{\tau })|^{2}\ d\mathbf{\gamma }+\frac{d}{dt}\left( \int_{\Omega }\,|D(%
\mathbf{p}_{n})|^{2}\,d\mathbf{x}\right) \\
&=&\frac{d}{dt}\left[ \int_{\Gamma }(\alpha +a)\frac{|\mathbf{p}_{n}|^{2}}{2}%
\ d\mathbf{\gamma }+\int_{\Omega }\,|D(\mathbf{p}_{n})|^{2}\,d\mathbf{x}%
\right] \\
&&-\int_{\Gamma }(\partial _{t}\alpha +\partial _{t}a)\frac{|\mathbf{p}%
_{n}|^{2}}{2}\ d\mathbf{\gamma }.
\end{eqnarray*}%
Therefore%
\begin{align}
& -\frac{d}{dt}\left[ \int_{\Gamma }(\alpha +a)\frac{|\mathbf{p}_{n}|^{2}}{2}%
\ d\mathbf{\gamma }+\int_{\Omega }\,|D(\mathbf{p}_{n})|^{2}\,d\mathbf{x}%
\right] +\int_{\Omega }|A\mathbf{p}_{n}|^{2}\,d\mathbf{x}  \notag \\
& =-\int_{\Gamma }(\partial _{t}\alpha +\partial _{t}a)\frac{|\mathbf{p}%
_{n}|^{2}}{2}\ d\mathbf{\gamma }+\int_{\Omega }\mathbf{U\cdot }A\mathbf{p}%
_{n}d\mathbf{x}+\int_{\Omega }\left( 2D(\mathbf{p}_{n})\mathbf{y}\right)
\cdot A\mathbf{p}_{n}d\mathbf{x}  \notag \\
& =I_{1}+I_{2}+I_{3}\mathbf{.}  \label{a2}
\end{align}%
Let us estimate the terms $I_{1},$ $I_{2}$ and $I_{3}$. We have%
\begin{align*}
I_{1}& =C\Vert \partial _{t}\alpha +\partial _{t}a\Vert _{L_{\infty }(\Gamma
)}\Vert \mathbf{p}_{n}\Vert _{L_{2}(\Gamma )}^{2} \\
& \leqslant C\Vert \partial _{t}\alpha +\partial _{t}a\Vert _{L_{\infty
}(\Gamma )}||\mathbf{p}_{n}||_{L_{2}(\Omega )}||\nabla \mathbf{p}%
_{n}||_{L_{2}(\Omega )}\in L_{1}(0,T),
\end{align*}%
uniformly bounded on $n=1,2,....$\ by the hypothesis and \eqref{estp}. We
also have
\begin{equation*}
I_{2}=\int_{\Omega }\mathbf{U\cdot }A\mathbf{p}_{n}d\mathbf{x}\leqslant
\Vert \mathbf{U}\Vert _{L_{2}(\Omega )}\Vert A\mathbf{p}_{n}\Vert
_{L_{2}(\Omega )}\leqslant C\Vert \mathbf{U}\Vert _{L_{2}(\Omega )}^{2}+%
\frac{1}{4}\Vert A\mathbf{p}_{n}\Vert _{L_{2}(\Omega )}^{2}.
\end{equation*}%
and%
\begin{equation*}
I_{3}\leqslant C\Vert \mathbf{y}\Vert _{L_{6}(\Omega )}\Vert D(\mathbf{p}%
_{n})\Vert _{L_{3}(\Omega )}\,\Vert A\mathbf{p}_{n}\Vert _{L_{2}(\Omega )}.
\end{equation*}%
Using Gagliardo-Nirenberg-Sobolev's inequality (\ref{LI}) with $q=6$ and
with $q=3,$ respectively,
\begin{equation*}
\Vert \mathbf{y}\Vert _{L_{6}(\Omega )}\leqslant C(\Vert \mathbf{y}\Vert
_{L_{2}(\Omega )}^{1/3}\Vert \nabla \mathbf{y}\Vert _{L_{2}(\Omega
)}^{2/3}+\Vert \mathbf{y}\Vert _{L_{2}(\Omega )})=f(t)\in L_{3}(0,T)\text{
\qquad by \eqref{uny},}
\end{equation*}%
\begin{equation*}
\Vert \mathbf{u}\Vert _{L_{3}(\Omega )}\leqslant C\left( \Vert \mathbf{u}%
\Vert _{L_{2}(\Omega )}^{2/3}\Vert \nabla \mathbf{u}\Vert _{L_{2}(\Omega
)}^{1/3}+\Vert \mathbf{u}\Vert _{L_{2}(\Omega )}\right) \qquad \text{for \ }%
\mathbf{u}=D(\mathbf{p}_{n}),
\end{equation*}%
we get%
\begin{equation*}
I_{3}\leqslant Cf(t)\left( \Vert \nabla \mathbf{p}_{n}\Vert _{L_{2}(\Omega
)}^{2/3}\Vert A\mathbf{p}_{n}\Vert _{L_{2}(\Omega )}^{1/3}+\Vert \nabla
\mathbf{p}_{n}\Vert _{L_{2}(\Omega )}\right) \,\Vert A\mathbf{p}_{n}\Vert
_{L_{2}(\Omega )}.
\end{equation*}%
where we have used the inequality%
\begin{equation}
\Vert \mathbf{p}_{n}\Vert _{H^{2}(\Omega )}\leqslant C\Vert A\mathbf{p}%
_{n}\Vert _{L_{2}(\Omega )}  \label{ap}
\end{equation}%
which holds by the regular properties of the Stokes operator $A$ (see
Theorem 9 of \cite{amr2} and Theorem 2 of \cite{S73}). Therefore applying
Young's inequality (\ref{yi}) and Korn's inequality (\ref{Korn}) we derive%
\begin{equation*}
I_{3}\leqslant Ch_{1}(t)\Vert D(\mathbf{p}_{n})\Vert _{L_{2}(\Omega )}^{2}+%
\frac{1}{4}\Vert A\mathbf{p}_{n}\Vert _{L_{2}(\Omega )}^{2}
\end{equation*}%
with $h_{1}(t)=f^{3}(t)\in L_{1}(0,T).$

Therefore, the above deduced estimates for the terms $I_{1},$ $I_{2},$ $I_{3}
$\ and \eqref{a2} imply
\begin{align*}
-\frac{d}{dt}\biggl[\int_{\Gamma }(\alpha +a)\frac{|\mathbf{p}_{n}|^{2}}{2}\
d\mathbf{\gamma }+\int_{\Omega }\,|D(\mathbf{p}_{n})|^{2}\,d\mathbf{x}\biggr]%
& +\frac{1}{2}\Vert A\mathbf{p}_{n}\Vert _{L_{2}(\Omega )}^{2}\leqslant
C\Vert \mathbf{U}\Vert _{L_{2}(\Omega )}^{2} \\
+& Ch_{2}(t)\left( \Vert D(\mathbf{p}_{n})\Vert _{L_{2}(\Omega
)}^{2}+1\right) ,
\end{align*}%
with some $h_{2}(t)\in L_{1}(0,T)$ depending only on the data %
\eqref{eq00sec12} of our problem \eqref{NSy}. Integrating this inequality
over the time interval $(t,T),$ we obtain
\begin{align*}
\left\Vert D\left( \mathbf{p}_{n}(t)\right) \right\Vert _{L_{2}(\Omega
)}^{2}& +\frac{1}{2}\int_{t}^{T}\Vert A\mathbf{p}_{n}(s)\Vert
_{L_{2}(0,T;L_{2}(\Omega ))}^{2}ds\leqslant C(\Vert \mathbf{U}\Vert
_{L_{2}(0,T;L_{2}(\Omega ))}^{2}) \\
& +C\int_{t}^{T}h_{2}(s)\left( \Vert D(\mathbf{p}_{n})\Vert _{L_{2}(\Omega
)}^{2}+1\right) ds-\left[ \int_{\Gamma }(\alpha +a)\frac{|\mathbf{p}_{n}|^{2}%
}{2}\ d\mathbf{\gamma }\right] .
\end{align*}%
Finally, with the help of the Korn inequality, we deduce
\begin{align*}
I& =\left[ \int_{\Gamma }(\alpha +a)\frac{|\mathbf{p}_{n}|^{2}}{2}\ d\mathbf{%
\gamma }\right] \leqslant C\Vert \alpha +a\Vert _{L_{\infty }(\Gamma )}\Vert
\mathbf{p}_{n}\Vert _{L_{2}(\Gamma )}^{2} \\
& \leqslant C\Vert \alpha +a\Vert _{L_{\infty }(\Gamma )}||\mathbf{p}%
_{n}||_{L_{2}(\Omega )}||\nabla \mathbf{p}_{n}||_{L_{2}(\Omega )} \\
& \leqslant h_{3}(t)\Vert \mathbf{p}_{n}\Vert _{L_{2}(\Omega )}^{2}+\frac{1}{%
2}||D(\mathbf{p}_{n})||_{L_{2}(\Omega )}^{2},
\end{align*}%
where $h_{3}(t)=C\Vert \alpha +a\Vert _{L_{\infty }(\Gamma )}^{2}\in
L_{\infty }(0,T)$ by the hypothesis \eqref{eq00sec12}. Then we have the
Gronwall inequality
\begin{align*}
\left\Vert D\left( \mathbf{p}_{n}\right) \right\Vert _{L_{\infty
}(0,T;L_{2}(\Omega ))}^{2}& +\int_{t}^{T}\Vert A\mathbf{p}_{n}\Vert
_{L_{2}(0,T;L_{2}(\Omega ))}^{2}ds\leqslant C(\Vert \mathbf{U}\Vert
_{L_{2}(0,T;L_{2}(\Omega ))}^{2})  \notag \\
& +C\int_{t}^{T}h_{2}(s)\left( \Vert D(\mathbf{p}_{n})\Vert _{L_{2}(\Omega
)}^{2}+1\right) ds+h_{3}(t)\Vert \mathbf{p}_{n}\Vert _{L_{2}(\Omega )}^{2}
\label{estpt1}
\end{align*}%
which gives
\begin{equation}
\left\Vert D\left( \mathbf{p}_{n}\right) \right\Vert _{L_{\infty
}(0,T;L_{2}(\Omega ))}^{2}+\Vert A\mathbf{p}_{n}\Vert _{L_{2}(\Omega
_{T})}^{2}\leqslant C(\Vert \mathbf{U}\Vert _{L_{2}(\Omega _{T})}^{2}+1)
\label{estpt1}
\end{equation}%
where $C$ is a constant only depending on the data. \ Hence \ \eqref{ap}
implies
\begin{equation}
\Vert \mathbf{p}_{n}\Vert _{L_{2}(0,T;H^{2}(\Omega ))}\leqslant C\Vert A%
\mathbf{p}_{n}\Vert _{L_{2}(0,T;L_{2}(\Omega ))}^{2}\leqslant C.  \label{sa}
\end{equation}

Moreover we can take $\boldsymbol{\psi }=\widetilde{\mathbf{e}}_{j}$ in (\ref%
{help}), multiply it by $\frac{d(s_{j}^{(n)}(t))}{dt}$ and summing on $%
j=1,...,n,$ then we deduce that
\begin{eqnarray*}
\Vert \partial _{t}\mathbf{p}_{n}\Vert _{L_{2}(\Omega )}^{2} &=&\int_{\Omega
}|\partial _{t}\mathbf{p}_{n}|^{2}\boldsymbol{\ }d\mathbf{x}=-\int_{\Omega
}\left( 2D(\mathbf{p}_{n})\mathbf{y}+\triangle \mathbf{p}_{n}+\mathbf{U}%
\right) \cdot \partial _{t}\mathbf{p}_{n}\boldsymbol{\ }d\mathbf{x} \\
&\leqslant &C\Vert \partial _{t}\mathbf{p}_{n}\Vert _{L_{2}(\Omega )}\left(
\Vert D(\mathbf{p}_{n})\Vert _{L_{4}(\Omega )}\Vert \mathbf{y}\Vert
_{L_{4}(\Omega )}+\Vert \triangle \mathbf{p}_{n}\Vert _{L_{2}(\Omega
)}+\Vert \mathbf{U}\Vert _{L_{2}(\Omega )}\right) .
\end{eqnarray*}%
Since%
\begin{equation*}
\Vert \mathbf{y}\Vert _{L^{4}(\Omega )}\leqslant \left( ||\mathbf{y}%
||_{L_{2}(\Omega )}^{1/2}||\nabla \mathbf{y}||_{L_{2}(\Omega )}^{1/2}+||%
\mathbf{y}||_{L_{2}(\Omega )}\right) ,
\end{equation*}%
\begin{equation*}
\Vert D(\mathbf{p}_{n})\Vert _{L^{4}(\Omega )}\leqslant \left( ||D(\mathbf{p}%
_{n})||_{L_{2}(\Omega )}^{1/2}||\nabla \left( D(\mathbf{p}_{n})\right)
||_{L_{2}(\Omega )}^{1/2}+||D(\mathbf{p}_{n})||_{L_{2}(\Omega )}\right)
\end{equation*}%
\ by \eqref{ra}, we obtain
\begin{equation}
\Vert \partial _{t}\mathbf{p}_{n}\Vert _{L_{2}(\Omega _{T})}^{2}\leqslant
C\left( \int_{0}^{T}\Vert D(\mathbf{p}_{n})\Vert _{L_{4}(\Omega )}^{2}\Vert
\mathbf{y}\Vert _{L_{4}(\Omega )}^{2}dt+\Vert \mathbf{U}\Vert _{L_{2}(\Omega
_{T})}^{2}\right) \leqslant C  \label{pa}
\end{equation}%
for the constant $C$ being independent of $n$ \ by \eqref{uny}, %
\eqref{estpt1} and \eqref{sa}.

Therefore \eqref{estpt1}, \eqref{sa} and \eqref{pa} imply that there exists
a suitable subsequence of $\left\{ \mathbf{p}_{n}\right\} ,$ such that%
\begin{eqnarray*}
\mathbf{p}_{n} &\rightarrow &\mathbf{p}\qquad \mbox{ weakly in
}\ L_{2}(0,T;H^{2}(\Omega ))\cap L_{\infty }(0,T;V),\qquad  \\
\partial _{t}\mathbf{p}_{n} &\rightarrow &\partial _{t}\mathbf{p}\qquad
\mbox{ weakly in
}\ L_{2}(\Omega _{T}), \\
\mathbf{p}_{n} &\rightarrow &\mathbf{p}\qquad \mbox{ strongly in
}\ L_{2}(\Omega _{T}).
\end{eqnarray*}%
Taking the limit on $n\rightarrow \infty $ in \eqref{help}, we derive that%
\begin{equation*}
\mathbf{p}\in L_{2}(0,T;H^{2}(\Omega ))\cap L_{\infty }(0,T;V),\qquad
\partial _{t}\mathbf{p}\in L_{2}(\Omega _{T})
\end{equation*}%
satisfies the equality%
\begin{eqnarray*}
\int_{\Omega }\left( \partial _{t}\mathbf{p}+2D(\mathbf{p})\mathbf{y}%
+\triangle \mathbf{p}+\mathbf{U}\right) \cdot \boldsymbol{\psi \ }d\mathbf{x}
&=&0,\text{\qquad }\forall \boldsymbol{\psi }\in V,\text{ a.e. in }(0,T), \\
\mathbf{p}(T) &=&\mathbf{0}.
\end{eqnarray*}%
and has the regularity \eqref{up}. Hence $\mathbf{p}$ fulfills the system %
\eqref{adjoint} in the usual sense. Moreover, reasoning as in Proposition
1.2, p. 182 of \cite{tem}, we derive that $\pi \in L_{2}(0,T;H^{1}(\Omega )).
$
\end{proof}

\bigskip

\section{Duality property}

\label{sec7}\setcounter{equation}{0}

\bigskip

In the next proposition we demonstrate the \textit{duality} property for the
solution $\mathbf{z}$ of the linearized equation (\ref{linearized}) and the
adjoint pair $(\mathbf{p},\pi ),$ \ being the solution of (\ref{adjoint}).

\begin{proposition}
The solution $\mathbf{z}$ of \ the\ system (\ref{linearized}) and the
solution $(\mathbf{p},\pi )$ of the adjoint system (\ref{adjoint}) verify
the following \textit{\ duality relation}%
\begin{equation}
\int_{\Omega _{T}}\mathbf{z}\cdot U\,d\mathbf{x}dt=\int_{\Gamma _{T}}\left\{
g(\mathbf{p}\cdot \bm{\tau })+f\left[ \pi -\left( \mathbf{p}\cdot \mathbf{y}%
\right) -2\left( D(\mathbf{p})\mathbf{n}\right) \cdot \mathbf{n}\,\right]
\right\} \,d\mathbf{\gamma }dt  \label{duality}
\end{equation}
\end{proposition}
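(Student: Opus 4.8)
The plan is to pair the \emph{strong} form of the adjoint system \eqref{adjoint} with the linearized state $\mathbf{z}$, exploiting the improved regularity $\mathbf{p}\in L_{2}(0,T;H^{2}(\Omega))$, $\pi\in L_{2}(0,T;H^{1}(\Omega))$ furnished by Proposition \ref{reg_extra}. Writing $\mathbf{U}=-\partial_{t}\mathbf{p}-2D(\mathbf{p})\mathbf{y}+\nabla\pi-\Delta\mathbf{p}$, I would take the $L_{2}(\Omega_{T})$ inner product with $\mathbf{z}$, so that the left-hand side of \eqref{duality} appears immediately. It is exactly this extra regularity that makes the boundary traces $(D(\mathbf{p})\mathbf{n})\cdot\mathbf{n}$ and $\pi|_{\Gamma}$ lie in $L_{2}(\Gamma_{T})$, so every boundary integral generated below is well defined; I regard the bookkeeping of these terms, rather than any estimate, as the heart of the argument.

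I then process the four volume integrals separately. For $-\int_{\Omega_{T}}\mathbf{z}\cdot\partial_{t}\mathbf{p}$ I integrate by parts in time: since $\mathbf{z},\mathbf{p}\in\mathcal{W}(0,T;\Omega)\hookrightarrow C([0,T];L_{2}(\Omega))$ by Lemma \ref{LM} and $\mathbf{z}(0)=0$, $\mathbf{p}(T)=0$, the endpoint contributions vanish and I obtain $\int_{0}^{T}\langle\partial_{t}\mathbf{z},\mathbf{p}\rangle\,dt$. Into this I insert the (instantaneous) weak formulation of \eqref{linearized} with the admissible test function $\boldsymbol{\psi}=\mathbf{p}(t)\in V$, which replaces the term by $\int_{\Gamma_{T}}(g-\alpha(\mathbf{z}\cdot\bm{\tau}))(\mathbf{p}\cdot\bm{\tau})$ minus the two convection integrals and minus the viscous pairing $\int_{\Omega_{T}}2D(\mathbf{z}):D(\mathbf{p})$. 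For the pressure term $\int_{\Omega_{T}}\mathbf{z}\cdot\nabla\pi$ I integrate by parts in space and use $\mathrm{div}\,\mathbf{z}=0$, $\mathbf{z}\cdot\mathbf{n}=f$ to get $\int_{\Gamma_{T}}f\pi$. For $-\int_{\Omega_{T}}\mathbf{z}\cdot\Delta\mathbf{p}$ I apply identity \eqref{integrate} with $\mathbf{v}=\mathbf{p}\in H^{2}(\Omega)\cap V$ and $\boldsymbol{\psi}=\mathbf{z}\in H^{1}(\Omega)$, producing $-\int_{\Gamma_{T}}(2D(\mathbf{p})\mathbf{n})\cdot\mathbf{z}+\int_{\Omega_{T}}2D(\mathbf{p}):D(\mathbf{z})$, whose volume part cancels the viscous pairing coming from the time term.

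It remains to reorganize the convection and boundary terms. Expanding $\mathbf{z}\cdot 2D(\mathbf{p})\mathbf{y}=[(\mathbf{y}\cdot\nabla)\mathbf{p}]\cdot\mathbf{z}+\mathbf{y}\cdot[(\mathbf{z}\cdot\nabla)\mathbf{p}]$, I would group $-[(\mathbf{y}\cdot\nabla)\mathbf{z}]\cdot\mathbf{p}-[(\mathbf{y}\cdot\nabla)\mathbf{p}]\cdot\mathbf{z}=-(\mathbf{y}\cdot\nabla)(\mathbf{z}\cdot\mathbf{p})$ and $-[(\mathbf{z}\cdot\nabla)\mathbf{y}]\cdot\mathbf{p}-\mathbf{y}\cdot[(\mathbf{z}\cdot\nabla)\mathbf{p}]=-(\mathbf{z}\cdot\nabla)(\mathbf{y}\cdot\mathbf{p})$. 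Integrating these total gradients by parts and using $\mathrm{div}\,\mathbf{y}=\mathrm{div}\,\mathbf{z}=0$, $\mathbf{y}\cdot\mathbf{n}=a$ and $\mathbf{z}\cdot\mathbf{n}=f$ turns them into the boundary terms $-\int_{\Gamma_{T}}a(\mathbf{z}\cdot\mathbf{p})$ and $-\int_{\Gamma_{T}}f(\mathbf{p}\cdot\mathbf{y})$.

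Finally I would decompose every boundary vector into normal and tangential parts via $\mathbf{p}\cdot\mathbf{n}=0$ and $\mathbf{z}\cdot\mathbf{n}=f$, substitute the two Navier conditions, namely $(2D(\mathbf{z})\mathbf{n})\cdot\bm{\tau}+\alpha(\mathbf{z}\cdot\bm{\tau})=g$ and $(2D(\mathbf{p})\mathbf{n})\cdot\bm{\tau}=-(\alpha+a)(\mathbf{p}\cdot\bm{\tau})$, and note that on $\Gamma$ one has $\mathbf{z}\cdot\mathbf{p}=(\mathbf{z}\cdot\bm{\tau})(\mathbf{p}\cdot\bm{\tau})$. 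I expect the decisive (and only subtle) point to be precisely this step: the tangential contributions collect with total coefficient $-\alpha+(\alpha+a)-a=0$ and cancel identically, which is the structural reason the adjoint friction is shifted to $\alpha+a$. What survives is $g(\mathbf{p}\cdot\bm{\tau})$ together with the $f$-terms $f\pi$, $-f(\mathbf{p}\cdot\mathbf{y})$ and $-2f(D(\mathbf{p})\mathbf{n})\cdot\mathbf{n}$, which is exactly the duality relation \eqref{duality}.
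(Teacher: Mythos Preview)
Your proof is correct and follows essentially the same route as the paper: multiply the strong adjoint equation by $\mathbf{z}$, integrate the pressure, Laplacian and convection terms by parts in space, and feed the remaining volume integral into the weak formulation of \eqref{linearized} tested with $\mathbf{p}$; the boundary bookkeeping and the cancellation $-\alpha+(\alpha+a)-a=0$ are identical. The only cosmetic difference is that you integrate $-\int\mathbf{z}\cdot\partial_t\mathbf{p}$ by parts in time and then invoke the instantaneous weak form $\langle\partial_t\mathbf{z},\mathbf{p}\rangle=\cdots$, whereas the paper skips this step by plugging $\boldsymbol{\psi}=\mathbf{p}$ directly into the time-integrated weak formulation of \eqref{linearized} (which already carries the term $-\mathbf{z}\cdot\partial_t\boldsymbol{\psi}$); your version is arguably cleaner since it makes explicit why the endpoint terms vanish.
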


\begin{proof}
If we multiply (\ref{adjoint})$\ $by $\mathbf{z}$, we have%
\begin{equation}
\int_{\Omega _{T}}\mathbf{z}\cdot Ud\mathbf{x}dt=\int_{\Omega _{T}}\mathbf{z}%
\cdot \left\{ -\partial _{t}\mathbf{p}-2D(\mathbf{p})\mathbf{y}+\nabla \pi
-\Delta \mathbf{p}\right\} ~d\mathbf{x}dt.  \label{duality10}
\end{equation}%
The integration by parts\ gives the following three relations
\begin{equation*}
\int_{\Omega }\mathbf{z}\cdot \nabla \pi \,d\mathbf{x}=\int_{\Gamma }\left(
\mathbf{z}\cdot \mathbf{n}\right) \pi \,d\mathbf{\gamma },
\end{equation*}%
\begin{align*}
-\int_{\Omega }\mathbf{z}\cdot \left( 2D(\mathbf{p})\mathbf{y}\right) \ d%
\mathbf{x}& =\int_{\Omega }\left[ \left( \mathbf{y}\cdot \nabla \right)
\mathbf{z}+\left( \mathbf{z}\cdot \nabla \right) \mathbf{y}\right] \cdot
\mathbf{p}\,d\mathbf{x} \\
& -\int_{\Gamma }\left( \left( \mathbf{y}\cdot \mathbf{n}\right) \left(
\mathbf{p}\cdot \mathbf{z}\right) +\left( \mathbf{z}\cdot \mathbf{n}\right)
\left( \mathbf{p}\cdot \mathbf{y}\right) \right) \,d\mathbf{\gamma }
\end{align*}%
and%
\begin{equation*}
-\int_{\Omega }\mathbf{z}\cdot \Delta \mathbf{p}\,d\mathbf{x}=-\int_{\Gamma
}2\left( D(\mathbf{p})\mathbf{n}\right) \cdot \mathbf{z}\,d\mathbf{\gamma }%
+\int_{\Omega }2\,D(\mathbf{p}):D(\mathbf{z})\,d\mathbf{x}.
\end{equation*}%
by (\ref{integrate}). \ Substituting these three relations in (\ref%
{duality10}), we obtain%
\begin{align*}
& \int_{\Omega _{T}}\mathbf{z}\cdot Ud\mathbf{x}dt=\int_{\Omega _{T}}\left\{
-\mathbf{z}\cdot \partial _{t}\mathbf{p}+\left[ \left( \mathbf{y}\cdot
\nabla \right) \mathbf{z}+\left( \mathbf{z}\cdot \nabla \right) \mathbf{y}%
\right] \cdot \mathbf{p}+2\,D(\mathbf{p}):D(\mathbf{z})\right\} ~d\mathbf{x}%
dt \\
& +\int_{\Gamma _{T}}\left[ \left( \mathbf{z}\cdot \mathbf{n}\right) \pi
-\left\{ \left( \mathbf{y}\cdot \mathbf{n}\right) \left( \mathbf{p}\cdot
\mathbf{z}\right) +\left( \mathbf{z}\cdot \mathbf{n}\right) \left( \mathbf{p}%
\cdot \mathbf{y}\right) +\left( 2D(\mathbf{p})\mathbf{n}\right) \cdot
\mathbf{z}\right\} \right] \,d\mathbf{\gamma }dt.
\end{align*}

By another hand if we take $\boldsymbol{\psi }=\mathbf{p}\in L_{2}(0,T;V)$ \
in (\ref{duality10}), we have%
\begin{eqnarray*}
\int_{\Omega _{T}}\{-\mathbf{z}\cdot \partial _{t}\mathbf{p}+\left[ (\mathbf{%
z}\cdot \nabla )\mathbf{y}+(\mathbf{y}\cdot \nabla )\mathbf{z}\right] \cdot
\mathbf{p}+2\,D(\mathbf{z}) &:&D(\mathbf{p})\,\}d\mathbf{x}dt \\
&=&\int_{\Gamma _{T}}(g-\alpha (\mathbf{z}\cdot {\bm{\tau })})(\mathbf{p}%
\cdot {\bm{\tau })}\,d\mathbf{\gamma }\,dt,
\end{eqnarray*}%
that implies%
\begin{align*}
& \int_{\Omega _{T}}\mathbf{z}\cdot Ud\mathbf{x}dt=\int_{\Gamma _{T}}[g(%
\mathbf{p}\cdot {\bm{\tau })}-\alpha (\mathbf{z}\cdot {\bm{\tau })}(\mathbf{p%
}\cdot {\bm{\tau })}\, \\
& +\left( \mathbf{z}\cdot \mathbf{n}\right) \pi -\left\{ \left( \mathbf{y}%
\cdot \mathbf{n}\right) \left( \mathbf{p}\cdot \mathbf{z}\right) +\left(
\mathbf{z}\cdot \mathbf{n}\right) \left( \mathbf{p}\cdot \mathbf{y}\right)
+\left( 2D(\mathbf{p})\mathbf{n}\right) \cdot \mathbf{z}\right\} \,]\ d%
\mathbf{\gamma }dt.
\end{align*}

Accounting the boundary conditions for $\mathbf{y}$, $\ \mathbf{z}$ and $%
\mathbf{p}$%
\begin{eqnarray*}
\left( \mathbf{y}\cdot \mathbf{n}\right)  &=&a,\qquad \left( \mathbf{z}\cdot
\mathbf{n}\right) =f,\qquad \left( \mathbf{p}\cdot \mathbf{n}\right) =0, \\
\left( 2D(\mathbf{p})\mathbf{n}\right) \cdot \bm{\tau } &=&-(a+\alpha )(%
\mathbf{p}\cdot \bm{\tau }),
\end{eqnarray*}%
we obtain%
\begin{align*}
& \int_{\Omega _{T}}\mathbf{z}\cdot Ud\mathbf{x}dt=\int_{\Gamma _{T}}g(%
\mathbf{p}\cdot {\bm{\tau })}-\alpha (\mathbf{p}\cdot {\bm{\tau })}(\mathbf{z%
}\cdot {\bm{\tau })}+f\pi \, \\
& -\{a(\mathbf{p}\cdot {\bm{\tau })}(\mathbf{z}\cdot {\bm{\tau })}+f(\mathbf{%
p}\cdot \mathbf{y}{)} \\
& +\left( \left( 2D(\mathbf{p})\mathbf{n}\right) \cdot \mathbf{n}\right)
f-(a+\alpha )(\mathbf{p}\cdot \bm{\tau })(\mathbf{z}\cdot {\bm{\tau })}\}\,d%
\mathbf{\gamma }dt \\
& =\int_{\Gamma _{T}}g(\mathbf{p}\cdot {\bm{\tau })}+f\pi -\{f(\mathbf{p}%
\cdot \mathbf{y}{)}+\left( \left( 2D(\mathbf{p})\mathbf{n}\right) \cdot
\mathbf{n}\right) f\}\,d\mathbf{\gamma }dt
\end{align*}%
that is%
\begin{equation*}
\int_{\Omega _{T}}\mathbf{z}\cdot Ud\mathbf{x}dt=\int_{\Gamma _{T}}\left\{ g(%
\mathbf{p}\cdot \bm{\tau })+f\left[ \pi -\left( \mathbf{p}\cdot \mathbf{y}%
\right) -2\left( D(\mathbf{p})\mathbf{n}\right) \cdot \mathbf{n}\,\right]
\right\} \ d\mathbf{\gamma }dt.
\end{equation*}%
which is the duality property (\ref{duality}).$\hfill $
\end{proof}

\section{Proof of the main results}

\label{sec8}\setcounter{equation}{0}

\subsection{Proof of Theorem \protect\ref{main_existence}}

Let us consider a minimizing sequence
\begin{equation*}
(a_{n},b_{n},\mathbf{y}_{a_{n},b_{n}})\in \mathcal{A}\times \left[ L_{\infty
}(0,T;L_{2}(\Omega ))\cap L_{2}(0,T;H^{1}(\Omega ))\right]
\end{equation*}%
of the cost functional $J$, namely
\begin{equation*}
\lim_{n}J(a_{n},b_{n},\mathbf{y}_{a_{n},b_{n}})=\inf (\mathcal{P}).
\end{equation*}

Since the sequence $(a_{n},b_{n})$ is bounded in $\mathcal{H}_{p}(0,T;\Gamma
)$ there exists a subsequence, still indexed by $n$, such that
\begin{equation*}
\left( a_{n},b_{n}\right) \rightarrow (a^{\ast },b^{\ast })\qquad \text{{%
weakly \ in}}\quad \mathcal{H}_{p}(0,T;\Gamma ).
\end{equation*}%
In addition, taking into account the estimate \eqref{uny}, we know that the
sequence $(\mathbf{y}_{a_{n},b_{n}})$ is uniformly bounded on the index $n$
in the space $L_{\infty }(0,T;H)\cap L_{2}(0,T;H^{1}(\Omega ))$, and $%
(\partial _{t}\mathbf{y}_{a_{n},b_{n}})$ is bounded in $L_{2}(0,T;H^{-1}(%
\Omega ))$, then there exists a subsequence, still indexed by $n$, such that
\begin{eqnarray*}
\mathbf{y}_{a_{n},b_{n}} &\rightharpoonup &\mathbf{y}^{\ast }\qquad
\mbox{ weakly in
}\ L_{\infty }(0,T;L_{2}(\Omega ))\cap L_{2}(0,T;H^{1}(\Omega )),\qquad  \\
\partial _{t}\mathbf{y}_{a_{n},b_{n}} &\rightharpoonup &\partial _{t}\mathbf{%
y}^{\ast }\qquad \mbox{ weakly in
}\ L_{2}(0,T;H^{-1}(\Omega )), \\
\mathbf{y}_{a_{n},b_{n}} &\rightarrow &\mathbf{y}^{\ast }\qquad
\mbox{ strongly in
}\ L_{2}(\Omega _{T}).
\end{eqnarray*}%
These convergence results allow to pass on the limit $n\rightarrow \infty $
in the variational formulation (\ref{cost}) for $\mathbf{y}_{a_{n},b_{n}}$
and in the equality (\ref{res1}), showing that $\mathbf{y}^{\ast }$
satisfies the integral equality%
\begin{align}
\int_{\Omega _{T}}\{-\mathbf{y}^{\ast }\cdot \boldsymbol{\psi }_{t}+& \left(
\left( \mathbf{y^{\ast }\cdot }\nabla \right) \mathbf{y}^{\ast }\right)
\cdot \boldsymbol{\psi }+2\,D(\mathbf{y}^{\ast }):D(\boldsymbol{\psi })\,\}d%
\mathbf{x}dt  \notag \\
=& \int_{\Gamma _{T}}(b^{\ast }-\alpha (y^{\ast }\cdot {\bm{\tau }}))(%
\boldsymbol{\psi }\cdot {\bm{\tau }})\,d\mathbf{\gamma }\,dt+\int_{\Omega }%
\mathbf{y}_{0}\cdot \boldsymbol{\psi }(0)\,d\mathbf{x,}
\end{align}%
which holds for any $\boldsymbol{\psi }\in H^{1}(0,T;V)$ with \ $\boldsymbol{%
\psi }(T)=0.$ Therefore $(a^{\ast },b^{\ast },\mathbf{y}^{\ast })$ is a
solution for the problem $(\mathcal{P}).$\vspace{2mm}\newline


\subsection{Proof of Theorem \protect\ref{main_1}}

Let $(a^{\ast },b^{\ast },\mathbf{y}^{\ast })$ be a solution of the problem $%
(\mathcal{P}).$ According to Theorem \ref{existence_state_y} and Proposition %
\ref{Gat}, for any $(a,b)\in \mathcal{H}_{p}(0,T;\Gamma )$ the corresponding
state equation (\ref{NSy}) has a unique solution $\mathbf{y}$ and the
control-to-state mapping
\begin{equation*}
(a,b)\rightarrow \mathbf{y}
\end{equation*}%
is the G\^{a}teaux differentiable at $(a^{\ast },b^{\ast })$. For $%
\varepsilon \in (0,1)$ and $(f,g)\in \mathcal{H}_{p}(0,T;\Gamma )$, let us
set $a_{\varepsilon }=a^{\ast }+\varepsilon (f-a^{\ast })$, $b_{\varepsilon
}=b^{\ast }+\varepsilon (g-b^{\ast })$ and $\mathbf{y}_{\varepsilon }$ the
corresponding state, being the solution of (\ref{res1}).

Since $(a^{\ast },b^{\ast },\mathbf{y}_{a^{\ast },b^{\ast }}^{\ast })$ is a
optimal solution and $(a_{\varepsilon },b_{\varepsilon },\mathbf{y}%
_{\varepsilon })$ is admissible, we have
\begin{equation*}
\lim_{\varepsilon \rightarrow 0}\frac{J(a_{\varepsilon },b_{\varepsilon },%
\mathbf{y}_{\varepsilon })-J(a^{\ast },b^{\ast },\mathbf{y}_{a^{\ast
},b^{\ast }}^{\ast })}{\varepsilon }\geq 0.
\end{equation*}%
By taking into account Proposition \ref{Gat}, we deduce that
\begin{equation}
\int_{\Omega _{T}}\mathbf{z}_{f,g}^{\ast }\cdot \left( \mathbf{y}^{\ast }-%
\mathbf{y}_{d}\right) \,d\mathbf{x}dt+\int_{\Gamma _{T}}\left( \lambda
_{1}\,a^{\ast }f\,+\lambda _{2}b^{\ast }g\right) \,d\mathbf{\gamma }dt\geq 0,
\label{zv}
\end{equation}%
where
\begin{equation*}
\mathbf{z}_{f,g}^{\ast }=\lim_{\varepsilon \rightarrow 0}\frac{\mathbf{y}%
_{\varepsilon }^{\ast }-\mathbf{y}^{\ast }}{\varepsilon }
\end{equation*}%
is the unique solution of the linearized equation
\begin{equation*}
\left\{
\begin{array}{ll}
\partial _{t}\mathbf{z}+(\mathbf{z}\cdot \nabla )\mathbf{y}^{\ast }+(\mathbf{%
y}^{\ast }\cdot \nabla )\mathbf{z}+\nabla \pi =\Delta \mathbf{z},\qquad div%
\mathbf{z}=0 & \quad \mbox{in}\ \Omega _{T},\vspace{2mm} \\
\mathbf{z}\cdot \mathbf{n}=f-a^{\ast },\quad \left[ 2D(\mathbf{z})\,\mathbf{n%
}+\alpha \mathbf{z}\right] \cdot \mathbf{\bm{\tau }}=g-b^{\ast }, & \quad %
\mbox{on}\ \Gamma _{T},\vspace{2mm} \\
\mathbf{z}(0)=0 & \quad \mbox{in}\ \Omega .%
\end{array}%
\right.
\end{equation*}%
On the other hand, taking $U=\mathbf{y}-\mathbf{y}_{d}$ and $y=y^{\ast }$ in
Proposition \ref{reg_extra}, we shows the existence of the adjoint state
pair $(\mathbf{p}^*, \pi^*)$ such that
\begin{equation*}
\mathbf{p}^*\in C([0,T];L_{2}(\Omega ))\cap L_2(0,T; H^{2}(\Omega )),\qquad
\pi^* \in L_{2}(0,T;H^{1}(\Omega ))
\end{equation*}
that verifies the equation (\ref{MR2}). Moreover, considering $\mathbf{z}=%
\mathbf{z}_{f,g}^{\ast }$ and $\mathbf{U}=\mathbf{y}^{\ast }-\mathbf{y}_{d}$
\ in the duality property (\ref{duality}), we have%
\begin{align*}
& \int_{\Omega _{T}}\mathbf{z}_{f,g}^{\ast }\cdot \left( \mathbf{y}^{\ast }-%
\mathbf{y}_{d}\right) \,d\mathbf{x}dt \\
& =\int_{\Gamma _{T}}\left\{ (f-a^{\ast })\left[ \pi +(\mathbf{p}^{\ast
}\cdot \mathbf{y}^{\ast })+2\left( \mathbf{n}\cdot D(\mathbf{p}^{\mathbf{%
\ast }})\right) \cdot \mathbf{n}\right] +(g-b^{\ast })\left( \mathbf{p}%
^{\ast }\cdot \mathbf{\bm{\tau }}\right) \right\} \,d\mathbf{\gamma }dt.
\end{align*}%
As a direct consequence of this equality and (\ref{zv}), we obtain the
necessary optimality condition (\ref{zvMR2}). \vspace{2mm}\newline

\textbf{Acknowledgment} The work of F. Cipriano was partially supported by
the Funda\c c\~ao para a Ci\^encia e a Tecnologia (Portuguese Foundation for
Science and Technology) through the project UID/MAT/00297/2013 (Centro de
Matem\'atica e Aplica\c c\~oes).

\vspace{1pt}

\vspace{1pt}

\end{document}